\NewSymbolClass\MyBinaryOperator[
\MyBinaryOperator\tensor{\otimes}[
    define keys={
        {der}{Lder},
    },
]
\MyBinaryOperator\fibre{\times}[
    define keys={
        {der}{Rder},
    },
]
\theoremstyle{definition}
\newtheorem{theorem}{Theorem}[section]
\theoremstyle{definition}
\newtheorem{corollary}[theorem]{Corollary}
\theoremstyle{definition}
\newtheorem{proposition}[theorem]{Proposition}
\theoremstyle{definition}
\newtheorem{lemma}[theorem]{Lemma}
\theoremstyle{definition}
\newtheorem{definition}[theorem]{Definition}
\theoremstyle{definition}
\newtheorem{example}[theorem]{Example}
\theoremstyle{definition}
\newtheorem{remark}[theorem]{Remark}
\theoremstyle{definition}
\theoremstyle{definition}
\theoremstyle{definition}
\newcommand{\Sym}{\text{Sym}}
\newcommand{\op}{\mathrm{op}}
\newcommand{\lf}{\mathrm{lf}}
\newcommand{\fg}{\mathrm{fg}}
\renewcommand{\Vec}{\text{Vec}}
\newcommand{\End}{\text{End}}
\newcommand{\tors}{\mathrm{tors}}
\newcommand{\gen}{\mathrm{gen}}
\newcommand{\pol}{\mathrm{pol}}
\newcommand{\Tr}{\mathrm{Tr}}
\newcommand{\bfV}{\mathbf{V}}
\newcommand{\thrmD}{\mathrm{D}}
\newcommand{\thrmK}{\mathrm{K}}
\newcommand{\thrmR}{\mathrm{R}}
\newcommand{\C}{\mathbf{C}}
\newcommand{\N}{\mathbf{N}}
\newcommand{\Q}{\mathbf{Q}}
\renewcommand{\P}{\mathbf{P}}
\newcommand{\GL}{\mathbf{GL}}
\newcommand{\frakm}{\mathfrak{m}}
\newcommand{\frakg}{\mathfrak{g}}
\newcommand{\frakp}{\mathfrak{p}}
\newcommand{\fraka}{\mathfrak{a}}
\newcommand{\frakb}{\mathfrak{b}}
\newcommand{\frakS}{\mathfrak{S}}
\newcommand{\FI}{\mathbf{FI}}
\newcommand{\FB}{\mathbf{FB}}
\newcommand{\bfS}{\textbf{S}}
\newcommand{\bfA}{\textbf{A}}
\newcommand{\bfM}{\textbf{M}}
\newcommand{\bfL}{\textbf{L}}
\newcommand{\id}{\text{id}}
\newcommand{\Tor}{\text{Tor}}
\newcommand{\scrC}{\mathscr{C}}
\newcommand{\scrP}{\mathscr{P}}
\newcommand{\triv}{\text{triv}}
\newcommand{\Rep}{\text{Rep}}
\newcommand{\Frac}{\text{Frac}}
\newcommand{\Spec}{\text{Spec}\,}
\newcommand{\Hom}{\text{Hom}}
\newcommand{\Mod}{\text{Mod}}
\newcommand{\Ext}{\text{Ext}}
\newcommand{\interior}[1]{%
  {\kern0pt#1}^{\mathrm{o}}%
}
\DeclarePairedDelimiter\abs{\lvert}{\rvert}
\title[Parabolic-equivariant modules]{Parabolic-equivariant modules over polynomial rings in infinitely many variables}
\author{Teresa Yu}
\address{Department of Mathematics, University of Michigan, Ann Arbor, MI}
\email{\href{mailto:twyu@umich.edu}{twyu@umich.edu}}
\urladdr{\url{https://sites.google.com/view/teresayu}}
\thanks{TY was supported by NSF grant DGE-2241144.}
\begin{document}

\begin{abstract}
We study the category of $\P$-equivariant modules over the infinite variable polynomial ring, where $\P$ denotes the subgroup of the infinite general linear group $\GL(\C^\infty)$ consisting of elements fixing a flag in $\C^\infty$ with each graded piece infinite-dimensional.
We decompose the category into simpler pieces that can be described combinatorially, and prove
a number of finiteness results, such as finite generation of local cohomology and rationality of Hilbert series. Furthermore, we show that this category is equivalent to the category of representations of a particular combinatorial category generalizing $\FI$.
\end{abstract}

\maketitle

\section{Introduction}

Equivariant infinite-dimensional commutative algebra has arisen in close connection to the theory of twisted commutative algebras (tca's) and representation stability. Despite being recent developments, the results and ideas from these fields have found far-reaching impacts, such as in algebraic topology \cite{CEF,MW}, commutative algebra \cite{DLL,ESS}, number theory \cite{MNP}, and algebraic statistics \cite{HS,DE}.

There are still relatively few well-understood examples in the field of equivariant infinite-dimensional commutative algebra, the most notable being the $\GL$-algebras $\Sym(\C^\infty\otimes\C^d)$ and $\Sym(\Sym^2(\C^\infty))$ \cite{SSglI,SSglII,NSSnoethdeg2}. Our main contribution is a thorough study of modules over infinite variable polynomial rings that have compatible actions of an infinite parabolic subgroup of the infinite general linear group. These parabolic subgroups have not previously been considered in this context, and the techniques used in this paper highlight how to extend ideas from the study of classical tca's to that of ``multivariate" tca's with additional symmetries. Our primary motivation for studying these equivariant modules comes from the study of $\frakS_\infty$-equivariant modules over the infinite variable polynomial ring; we discuss this further in \S\ref{subsubsec:smod}.

\subsection{Main results}

Let $\bfV=\C^\infty$ be an infinite-dimensional vector space equipped with a flag of length $n\geq 1$ where each graded piece is infinite-dimensional. Let $\P\subset\GL(\bfV)$ be the parabolic subgroup of elements stabilizing this flag. Concretely, $\P$ consists of invertible $(n\times n)$-block upper triangular matrices, so matrices along the diagonal are in $\GL(\C^\infty)$. Let 
\[\bfA=\Sym(\bfV)=\C[x_{ij}:i\in[n],j\geq 1]\] be the infinite variable polynomial ring. An $\bfA$-module $M$ is a module over the ring $\bfA$ equipped with a compatible structure of a polynomial representation of $\P$  (see \S\ref{subsec:prelim Amods} for a precise definition). Our goal is to study the commutative algebra of $\bfA$ by understanding the structure of the category $\Mod_\bfA$ of $\bfA$-modules.

In finite-dimensional commutative algebra, the prime spectrum of a ring provides a good starting point for the study of modules over the ring. The analogous space to study for $\bfA$-modules is the spectrum of $\P$-prime ideals of $\bfA$ (defined in \S\ref{subsec:prelim spec}). We find that the equivariant spectrum is quite simple, as there are exactly $n+1$ such $\P$-prime ideals that are of the form $\frakp_i=(x_{jk}:1\leq j\leq n-i,k\geq 1)$, for $i=0,\ldots,n$.

\begin{theorem}
    The $\P$-prime ideals of $\bfA$ form a single chain of length $n$ in $\bfA$:
    \[0=\frakp_n\subsetneq\frakp_{n-1}\subsetneq\cdots\subsetneq\frakp_0\subsetneq\bfA.\]
\end{theorem}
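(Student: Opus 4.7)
The forward direction is routine: each $\frakp_i$ is $\P$-stable since its generating set spans the $\P$-subrepresentation $\bfV_{n-i}=\mathrm{span}\{x_{jk}:j\leq n-i\}$ of $\bfV$, and $\bfA/\frakp_i\cong\Sym(\bfV/\bfV_{n-i})$ is a polynomial ring, hence an integral domain, so $\frakp_i$ is prime and in particular $\P$-prime. The strict inclusions $\frakp_n\subsetneq\cdots\subsetneq\frakp_0\subsetneq\bfA$ are visible on generators. The content of the theorem is therefore the claim that there are no other $\P$-primes.

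Given a $\P$-prime $\frakq\subsetneq\bfA$, I would first classify the $\P$-subrepresentations of $\bfV$. Combining the diagonal torus weights with the fact that the unipotent radical $U$ of $\P$ sends each row of variables into strictly lower-indexed rows, one sees these are exactly $0=\bfV_0\subsetneq\bfV_1\subsetneq\cdots\subsetneq\bfV_n$. Thus $\frakq\cap\bfV=\bfV_m$ for a unique $m$, yielding $\frakq\supseteq\frakp_{n-m}$. The quotient $\bfA/\frakp_{n-m}\cong\Sym(\bfV/\bfV_m)$ carries an action of the parabolic subgroup of $\GL(\bfV/\bfV_m)$ preserving the quotient flag, and $\frakq/\frakp_{n-m}$ is $\P$-prime there with trivial intersection with $\bfV/\bfV_m$ (by maximality of $m$). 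So, after renaming, the problem reduces to the key lemma: \emph{if $\frakq$ is $\P$-prime in $\bfA$ with $\frakq\cap\bfV=0$, then $\frakq=0$.}

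The first half of the lemma is $\frakq\cap\Sym(\bfV_1)=0$. This intersection is a $\P$-stable ideal of $\Sym(\bfV_1)$, and $\P$ acts on $\Sym(\bfV_1)$ through the Levi as $\GL(\bfV_1)$ (since $U$ fixes $\bfV_1$), so its $\P$-stable ideals are only $0$, $\Sym(\bfV_1)$, and the powers $\frakm_1^k$ of the maximal ideal $\frakm_1=(\bfV_1)$. The first two are ruled out immediately (by $1\notin\frakq$ and $\bfV_1\not\subseteq\frakq$), and each factorization $\frakm_1^k=\bfV_1\cdot\frakm_1^{k-1}$ can be reduced iteratively via $\P$-primality to force $\bfV_1\subseteq\frakq$, a contradiction.

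The second half, $\frakq\subseteq\Sym(\bfV_1)$, is where the argument gets interesting. Pick $f\in\frakq$ nonzero of minimal degree $d$, necessarily $d\geq 2$, and suppose $\partial f/\partial x_{i,k}\neq 0$ for some $i\geq 2$. The matrix unit $X=E_{(1,\ell),(i,k)}\in\mathrm{Lie}(U)$ acts on $\bfA$ as the derivation $x_{1\ell}\cdot\partial/\partial x_{i,k}$, and since $u(t)\cdot f=\exp(tX)f$ is a polynomial in $t$ (as $X$ is locally nilpotent) lying in $\frakq$ for every $t\in\C$, extracting the coefficient of $t$ via Vandermonde interpolation shows $x_{1\ell}\cdot(\partial f/\partial x_{i,k})\in\frakq$ for all $\ell,k$ and $i\geq 2$. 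Letting $W$ be the $\P$-subrepresentation of $\bfA$ generated by $\{\partial f/\partial x_{i,k}\}_{i\geq 2,\,k\geq 1}$, the $\P$-equivariance of multiplication upgrades these containments to $\bfV_1\cdot W\subseteq\frakq$. Since $\bfV_1$ is an irreducible $\P$-subrepresentation and $\bfV_1\not\subseteq\frakq$, $\P$-primality forces $W\subseteq\frakq$, so each $\partial f/\partial x_{i,k}\in\frakq$; by minimality of $d$ these all vanish, giving $f\in\Sym(\bfV_1)\cap\frakq=0$, a contradiction. The main obstacle is precisely this upgrade from the elementwise $x_{1\ell}\cdot(\partial f/\partial x_{i,k})\in\frakq$ to the subrepresentation-level $\bfV_1\cdot W\subseteq\frakq$ needed to invoke $\P$-primality; this relies on $\P$ acting by ring automorphisms and on the fact that any nonzero $x_{1\ell}$ generates $\bfV_1$ as a $\P$-subrepresentation.
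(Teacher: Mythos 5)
Your proof is correct, but it takes a genuinely different route from the paper. The paper first classifies \emph{all} $\P$-ideals of $\bfA$: it invokes the representation theory of $\P$ developed in the author's earlier work to see that every indecomposable $\P$-submodule of $\Sym(\bfV)$ is of the form $\Sym^a(\bfV_i)$, concludes that every $\P$-ideal is a sum of powers of the $\frakp_i$'s, and then simply reads off which of these are $\P$-prime. You instead argue directly on an arbitrary $\P$-prime $\frakq$: reduce via $\frakq\cap\bfV=\bfV_m$ (using only the submodule structure of $\bfV$) to the case $\frakq\cap\bfV=0$, and kill such a $\frakq$ by combining the classical classification of $\GL(\C^\infty)$-stable ideals of $\Sym(\bfV_1)$ with a minimal-degree element, differentiation along unipotent one-parameter subgroups, and an application of $\P$-primality to the ideals generated by $\bfV_1$ and by the $\P$-span of the partial derivatives. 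Your argument is more elementary and self-contained (it does not need the classification of indecomposable $\P$-submodules of $\Sym(\bfV)$), but it yields only the prime spectrum, whereas the paper's method gives the full lattice of $\P$-ideals, which is used again later (e.g.\ to show torsion modules are annihilated by a power of $\frakp_{d-1}$, hence by powers of single orbits). Two small points to tighten: the claim $d\geq 2$ requires noting that $\frakq$ is graded (subrepresentations of polynomial representations split by degree), though your argument never actually uses $d\geq 2$; and in both halves of the key lemma the $\P$-primality hypothesis applies to $\P$-ideals of $\bfA$, so you should pass from $\frakm_1^k\subseteq\frakq$ and $\bfV_1\cdot W\subseteq\frakq$ to the $\bfA$-ideals they generate (e.g.\ $\frakp_{n-1}\cdot\frakp_{n-1}^{k-1}\subseteq\frakq$ and $(\bfV_1\bfA)(W\bfA)\subseteq\frakq$) before invoking it; both passages are immediate.
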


This result follows from the polynomial representation theory of $\P$, which we studied extensively in \cite{Yu}. In particular, representations of $\P$ are not generally semisimple, and so there are much fewer indecomposable $\P$-submodules of $\bfA$ compared to indecomposable submodules with respect to the Levi subgroup.

The equivariant spectrum provides a way to decompose $\Mod_\bfA$. Let $\Mod_{\bfA,\leq d}$ denote the full subcategory of modules supported on $\frakp_d$, i.e., locally annihilated by a power of $\frakp_d$. Then we have a filtration of $\Mod_\bfA$:
\[\Mod_{\bfA,\leq 0}\subset\Mod_{\bfA,\leq 1}\subset\cdots\subset\Mod_{\bfA,\leq n}=\Mod_{\bfA}.\]
Our goal is to understand each graded piece of the filtration, given by the Serre quotient
\[\Mod_{\bfA,d}=\Mod_{\bfA,\leq d}/\Mod_{\bfA,\leq d-1}.\]
The use of this filtration to understand $\Mod_\bfA$ closely follows the rank stratification used by Sam--Snowden to study $\GL$-equivariant modules over infinite variable polynomial rings \cite{SSglII}. However, we are able to more directly describe a good approximation of the graded pieces of the filtration by studying the Serre quotient $\Mod_d^\gen$ of the category of $\bfA/\frakp_d$-modules by the subcategory of torsion modules.

To describe these quotient categories, we show that they are equivalent to the categories $\Rep(H_d)$ of polynomial representations of subgroups $H_d\subset\P$ that stabilize certain points in the classical spectrum of $\bfA$. We provide a combinatorial description for $\Rep(H_d)$ which allows us to deduce finiteness results for $\Mod_d^\gen$.

\begin{theorem}
    For each $d\in[n]$, the category $\Mod_d^\gen$ is locally of finite length, and finite length objects have finite injective dimension.
\end{theorem}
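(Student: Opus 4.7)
The plan is to exploit the equivalence $\Mod_d^\gen\simeq\Rep(H_d)$ asserted just before the theorem and reduce both claims to the polynomial representation theory of $H_d$. First I would make $H_d$ explicit: since $\bfA/\frakp_d$ is the polynomial ring on the last $d$ graded pieces of the flag stabilized by $\P$, a ``generic'' closed point $\xi_d$ of $\Spec(\bfA/\frakp_d)$ can be specified by sending each generator in those blocks to a nonzero scalar in one distinguished direction and to zero otherwise. The stabilizer $H_d=\mathrm{Stab}_\P(\xi_d)$ then decomposes, up to a unipotent extension, as a block-upper-triangular group acting on the first $n-d$ blocks together with the ``non-generic'' complementary directions of the last $d$ blocks, so $H_d$ is itself a parabolic of the type studied in \cite{Yu}. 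Hence $\Rep(H_d)$ admits a combinatorial model indexed by tuples of partitions.

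Given this model, local finite length of $\Rep(H_d)$ follows from the Noetherianity and Schur-functor arguments of \cite{Yu}: every finitely generated polynomial representation is a quotient of a finite direct sum of standard objects indexed by tuples of partitions of bounded total size, each of which has a finite composition series with simple quotients parametrized by the same combinatorial data. For finite injective dimension on finite length objects, a horseshoe/d\'evissage argument reduces the problem to bounding the injective dimension of each simple object $S$ of $\Rep(H_d)$ individually. I would carry this out by combining the explicit injective resolutions of Schur functors in the $\GL$-equivariant setting due to Sam--Snowden \cite{SSglII} with the Ext-calculations for parabolic representations from \cite{Yu}: each layer of the socle filtration of the injective envelope of $S$ contributes to the resolution through Ext-groups whose indexing partitions strictly decrease in total size, and a decreasing induction on this size terminates the resolution after finitely many steps.

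The main obstacle is this last step: controlling the length of the injective resolution uniformly in the non-semisimple category $\Rep(H_d)$. Unlike the semisimple $\GL$-situation, the socle filtration of an injective here has several nontrivial layers, and one must argue that only finitely many homological degrees contribute. The key input is the parabolic Ext-quiver from \cite{Yu}, which encodes precisely how simples are glued inside injective envelopes and imposes a uniform bound on the depth of the injective resolution of any simple. Once this uniform bound is established for $H_d$, both conclusions of the theorem follow by transporting along the equivalence $\Mod_d^\gen\simeq\Rep(H_d)$.
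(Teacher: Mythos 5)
Your global strategy coincides with the paper's: grant the equivalence $\Mod_d^\gen\cong\Rep(H_d)$ (Theorem~\ref{thm:gen mods equiv Hd reps}) and deduce both finiteness statements from a combinatorial description of $\Rep(H_d)$. The gap is in the description itself. It is not true that $H_d$ is, up to a negligible unipotent extension, a parabolic of the kind treated in \cite{Yu}: the $(n-d+1,n-d+1)$-block of $H_d$ is the infinite general affine group $\mathbf{GA}(\bfV_{(n-d+1)})$, and its translation (unipotent) part acts nontrivially on every polynomial representation, so it genuinely changes the category. Already for $n=d=1$ one has $H_1\cong\mathbf{GA}(\C^\infty)$, whose polynomial representation category is the non-semisimple category of locally finite length $\FI$-modules, whereas the corresponding one-block parabolic of \cite{Yu} is $\GL(\C^\infty)$ itself, with semisimple polynomial representation theory; for instance $0\to W_1\to\bfV\to\C\to 0$ is a non-split extension over $H_1$. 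Consequently the combinatorial model you import from \cite{Yu} --- the category $\FB(n)$ of weighted sets and weight-non-decreasing \emph{bijections}, together with its Ext-quiver --- is the wrong one for $H_d$. The correct model is the category $\scrC_d$ whose morphisms are weight-non-decreasing \emph{injections} surjective on weights strictly less than $d$, and establishing $\Rep(H_d)\cong\Mod^\lf_{\scrC_d}$ (Proposition~\ref{prop:repH_d equiv Cd-mod}) requires the socle computation for $T_{d,\underline{a}}=\bigotimes_{i}(\bfV/\bfV_{n-i})^{\otimes a_i}$ (Lemmas~\ref{lem:H_d kernel} and~\ref{lem:socle H_d}); none of this can simply be quoted from \cite{Yu}. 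For the same reason, local finite length of $\Rep(H_d)$ does not follow from restricting \cite{Yu}'s results: one must show the restrictions $T_{d,\underline{a}}$ have finite length over $H_d$ (with constituents among the $S_{d,\underline{\lambda}}$), which is part of the same analysis.

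This matters most in your homological step. You propose to bound the injective dimension of each simple via the Ext-quiver of $\Rep(\P)$ from \cite{Yu} and injective resolutions in the $\GL$-setting of \cite{SSglII}, with termination coming from strictly decreasing total size of the indexing partitions. But in $\Rep(\P)$ the composition factors of an indecomposable injective all have the \emph{same} total size (morphisms in $\FB(n)$ are bijections), so the proposed decreasing induction has no counterpart there, and \cite{SSglII} lives in the semisimple $\GL$-world; neither input controls the gluing that actually occurs in $\Rep(H_d)$. The paper's mechanism is different: $\scrC_d$ is \emph{inwards finite}, hence the injective envelope of each simple $\scrC_d$-module has finite length and finite length objects have finite injective dimension \cite[\S 2.1.5]{SSstabpatterns}; transported through $\Rep(H_d)\cong\Mod^\lf_{\scrC_d}$ this gives Corollary~\ref{cor:repH_d finite/inj}, and the theorem then follows from Theorem~\ref{thm:gen mods equiv Hd reps}. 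To repair your argument you would need to replace the appeal to \cite{Yu}'s quiver by an actual determination of the injective envelopes $T_{d,\underline{\lambda}}$ of the simples $S_{d,\underline{\lambda}}$ in $\Rep(H_d)$ (Corollary~\ref{cor:inj H_d}), or an equivalent finiteness property of $\scrC_d$, before your d\'evissage can run.
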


Although $\Mod^\gen_d$ is smaller than $\Mod_{\bfA,d}$, the categories are sufficiently close for us to understand $\Mod_{\bfA,d}$ from this description. We are then able to study $\Mod_\bfA$ more broadly from these graded pieces.
To accomplish this, we study local cohomology $\thrmR^i\Gamma_d$ and derived saturation $\thrmR^i\Sigma_d$ with respect to each prime $\frakp_d$ (see \S\ref{sec:ModA} for definitions), and show that they preserve finite generation.

\begin{theorem}
    For each $d=0,\ldots,n$ and any finitely generated $\bfA$-module $M$, the local cohomology modules $\thrmR^i\Gamma_{>d}(M)$ and derived saturation modules $\thrmR^i\Sigma_{>d}(M)$ are finitely generated for all $i$, and they vanish for $i\gg 0$.
\end{theorem}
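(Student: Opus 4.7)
The plan is to link the two finiteness statements via the fundamental distinguished triangle
\[\thrmR\Gamma_{>d}(M)\to M\to\thrmR\Sigma_{>d}(M)\to\thrmR\Gamma_{>d}(M)[1],\]
so that it suffices to prove finite generation and bounded amplitude for $\thrmR\Sigma_{>d}$. Once this is achieved, local Noetherianity of $\bfA$ (established earlier in the paper) gives finite generation of $\Gamma_{>d}(M)$ as a submodule of the finitely generated $M$, and the higher $\thrmR^i\Gamma_{>d}(M)$ are then controlled by the long exact sequence attached to the triangle, inheriting the same vanishing for $i\gg 0$.

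For $\thrmR\Sigma_{>d}$ itself, I would exploit the equivalence $\Mod_d^\gen\simeq\Rep(H_d)$ provided by the preceding theorem. The saturation functor factors through a localization closely related to $\Mod_d^\gen$, and by the preceding theorem the image of a finitely generated $M$ there has finite length, so it admits a finite injective resolution whose length is bounded uniformly in $M$ (since all finite-length objects of $\Mod_d^\gen$ have finite injective dimension). Applying the derived right adjoint of the quotient to such a resolution computes $\thrmR\Sigma_{>d}(M)$ and immediately yields the amplitude bound. What remains is to show that this right adjoint carries indecomposable finite-length injectives of $\Mod_d^\gen$ to finitely generated $\bfA$-modules.

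The main obstacle is precisely this last step. I would attack it by induction on $d$, ascending from $d=0$, where the subcategory being saturated is smallest and the statement is essentially trivial. The inductive step requires an explicit description of the indecomposable injectives of $\Rep(H_d)$, extracted from the combinatorial model underlying the equivalence with $\Mod_d^\gen$, together with a computation identifying their preimages in $\Mod_\bfA$ as $\bfA$-modules induced, in a suitable sense, from polynomial representations of the parabolic $H_d\subset\P$. Such preimages are finitely generated up to error terms supported on lower strata $\Mod_{\bfA,\leq d-1}$, and these error terms fall into the inductive hypothesis. A secondary technical nuisance is the discrepancy between $\Mod_{\bfA,d}$ and $\Mod_d^\gen$, which is itself a torsion subcategory and can be absorbed into the same induction. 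With these ingredients in place, finite generation and bounded amplitude of $\thrmR^i\Sigma_{>d}$ follow, and the triangle then yields the corresponding statements for $\thrmR^i\Gamma_{>d}$.
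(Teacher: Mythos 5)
Your overall skeleton matches the paper's route: verify a gluing formalism for the filtration $\Mod_{\bfA,\leq 0}\subset\cdots\subset\Mod_{\bfA,\leq n}$, reduce via the triangle $\thrmR\Gamma_{\leq d}\to\mathrm{id}\to\thrmR\Sigma_{>d}$, and feed in a stratum-level finiteness statement obtained from the equivalence $\Mod_d^\gen\cong\Rep(H_d)$ (finite length, finite injective dimension, explicit injectives). This is exactly how the paper proceeds: it verifies the (Inj)/Artin--Rees hypothesis so that the axiomatic results of \cite{SSglII} apply, proves that $\thrmR^iS_d$ of a finitely generated object of $\Mod_{\bfA,d}$ is finitely generated and eventually zero (Lemma~\ref{lem:fg RS}), and then invokes the induction of \cite[Theorem 6.10]{SSglII} to glue across strata. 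Note also that the amplitude bound cannot come from a single finite injective resolution ``in a localization close to $\Mod_d^\gen$'': finitely generated objects of $\Mod_{\bfA,>d}$ do not have finite length, so the bound only emerges stratum by stratum through that induction.

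The genuine gap is in the step you yourself flag as ``what remains,'' and your plan for it does not work as stated. First, you need the derived right adjoint computed from injective resolutions in $\Mod_d^\gen$ (i.e.\ $\thrmR\overline{S}_d$, living over $A_d$) to agree with the derived section functor $\thrmR S_d$ of the stratum of $\Mod_\bfA$; these are right adjoints of localizations of \emph{different} ambient categories with different injectives, and the paper has to prove their agreement (Proposition~\ref{prop:der sec agree}) by showing $\overline{S}_d(I)$ is derived saturated with respect to $\Mod_{\bfA,\leq d-1}$, via a d\'evissage/Ext--Tor argument; calling this a ``nuisance absorbed into the induction'' skips a real proof. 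Second, your key claim that the preimages of the indecomposable injectives are ``finitely generated up to error terms supported on lower strata, and these error terms fall into the inductive hypothesis'' is not a proof: a module supported on $V(\frakp_{d-1})$ can fail to be finitely generated (e.g.\ infinite sums of finite length modules), and the inductive hypothesis about section functors at lower strata does not bound the cokernel of $A_d\otimes V\to\overline{S}_d(\overline{T}_d(A_d\otimes V))$. The paper's resolution is to prove there is \emph{no} error term: Proposition~\ref{prop:saturated objects} shows $A_d\otimes V$ is already saturated, by an explicit argument that any polynomial element of $K_d\otimes V$ has denominator on which the Levi acts trivially, hence lies in $A_d\otimes V$; combined with Corollary~\ref{cor:gen Amod} (injectives of $\Mod_d^\gen$ are $\overline{T}_d(A_d\otimes V)$ with $V$ injective in $\Rep(\P)$) this yields Lemma~\ref{lem:fg RS}. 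Finally, your base-case remark is off: saturation with respect to the smallest subcategory, $\thrmR\Sigma_{>0}$, is far from trivial (it is essentially local cohomology at the maximal $\P$-ideal); only the stratum-$0$ section functor is trivial, and the gluing induction naturally runs downward from $d=n$, not upward from $d=0$.
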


This theorem is instrumental in proving our main structural results on the category $\Mod_\bfA$ and the bounded derived category $\thrmD^b_\fg(\bfA)$ of finitely generated $\bfA$-modules:
\begin{itemize}
    \item We show that the Krull--Gabriel dimension of $\Mod_\bfA$ is $n$, and so any positive integer can be realized as the dimension of a category studied in this paper
    (Proposition~\ref{prop:kg dim}).
    \item We describe a semi-orthogonal decomposition of $\thrmD^b_\fg(\bfA)$, where each piece can be described combinatorially via the categories $\Rep(H_d)$ (Corollary~\ref{cor:semiorthog}).
    \item We find a set of generators for $\thrmD^b_\fg(\bfA)$ as a triangulated category (Corollary~\ref{cor:gens derived cat}).
    \item We show that the Grothendieck group $\thrmK(\bfA)$ of $\Mod_\bfA$ is a free module of rank $(n+1)$ over $\Lambda^{\otimes n}$, where $\Lambda$ denotes the ring of symmetric polynomials (Theorem~\ref{thm:groth grp}).
    \item We provide a natural definition for the Hilbert series of an $\bfA$-module, and prove a corresponding rationality result for the Hilbert series that moreover detects the support of the module (Theorem~\ref{thm:hilbert series}).
\end{itemize}

Finally, we define in \S\ref{sec:fi(n)} a combinatorial category $\FI(n)$ that generalizes the category $\FI$ of finite sets and injections to a ``weighted" setting. Thus, $\FI(n)$-modules generalize $\FI$-modules, which were introduced by Church--Ellenberg--Farb and have found numerous applications \cite{CEF}. We show that the category of $\FI(n)$-modules is equivalent to the category $\Mod_\bfA$ of $\bfA$-modules as tensor categories (Theorem~\ref{thm:main currying}). When $n=1$, this recovers the equivalence between $\FI$-modules and $\GL(\C^\infty)$-equivariant modules over $\Sym(\C^\infty)$.

\subsection{Relation to other work}

\subsubsection{Equivariant modules over infinite-dimensional algebras}

Motivated by the ubiquity of $\FI$-modules and their equivalence with $\GL(\C^\infty)$-equivariant modules over the infinite variable polynomial ring $\Sym(\C^\infty)$, Sam--Snowden studied the category of such modules from an algebraic and homological perspective in \cite{SSglI}. This has sparked the systematic study of equivariant modules over infinite-dimensional algebras from the perspective of commutative and homological algebra; see \cite[Table 1]{Gan} for a summary of the literature. The arguments and ideas of this paper are most closely related to those appearing in \cite{NSSnoethdeg2, SSglII, SSspequiv}.

\subsubsection{The infinite symmetric group}\label{subsubsec:smod}

One of the original examples of an infinite-dimensional algebra that is noetherian up to symmetry comes from the infinite variable polynomial ring $R=\C[\xi_1,\xi_2,\ldots]$ with the action of the infinite symmetric group $\frakS_\infty$ \cite{cohen}. However, the structure of $\frakS_\infty$-equivariant modules over $R$ is still quite mysterious. Only recently were the $\frakS_\infty$-stable ideals of $R$ classified by Nagpal--Snowden \cite{NSsym}, and there are still a number of open problems and conjectures on the asymptotic behavior of invariants related to these ideals \cite{LNNRdim,LNNRreg,MR}.

Let $\mathfrak{h}_n=(\xi_i^n:i\geq 1)$ denote the ideal of $R$ generated by the $n^{\text{th}}$ powers of all the variables. There is a natural way to consider an $\FI(n)$-module as a module over $R/\mathfrak{h}_{n+1}$ with a compatabile $\frakS_\infty$-action, and so we have a functor $\Mod_\bfA\to\Mod_{R/\mathfrak{h}_{n+1}}$. We hope the results in this paper, especially those on the structure of $\Mod^\gen_n$, will see use in studying the category of symmetric $R/\mathfrak{h}_{n+1}$-modules and $R$-modules more generally.

\subsubsection{Representation stability and applications} 

Finiteness properties of equivariant infinite-dimensional algebras are intimately linked to the notion of representation stability as introduced by Church--Farb \cite{CF}. One of the most notable examples of representation stability comes from the cohomology of configuration spaces of distinct points: fixing the cohomological degree while varying the number of points, the cohomology groups form a finitely generated $\FI$-module and therefore exhibit representation stability \cite{CEF}. We hope that the weighted generalizations of $\FI$-modules introduced in this paper will have similar applications, perhaps to configuration spaces of multicolored points.

\subsection{Outline} The paper is organized as follows. In \S\ref{sec:prelim}, we review representation theory of $\P$ and provide some preliminary results on the polynomial ring $\bfA$ and $\P$-equivariant $\bfA$-modules. In \S\ref{sec:rep H_d}, we study the categories $\Rep(H_d)$ of representations of the stabilizer subgroups $H_d\subset\P$, and in \S\ref{sec:gen}, we show that these categories are equivalent to the Serre quotient categories $\Mod_{A_d}^\gen$ of generic $A_d$-modules. We then use these equivalences to prove the main structural results on $\bfA$-modules in \S\ref{sec:ModA}. Finally, we show the equivalence between $\bfA$-modules and $\FI(n)$-modules in \S\ref{sec:fi(n)}.

\subsection{Notation and conventions} 
Fix $n\geq 1$ a positive integer. We work over the field $\C$ of complex numbers. A tensor category is a $\C$-linear and symmetric monoidal category. We collect here the most important notation:
\begin{align*}
    \bfV\textbf{ : }&\text{the infinite-dimensional $\C$-vector space with flag $\{\bfV_i\}_{i=0}^n$}\\
    \P\textbf{ : }&\text{the parabolic subgroup of $\GL(\bfV)$ of elements fixing the flag}\\
    \bfA\textbf{ : }&\text{the $\P$-equivariant infinite variable polynomial ring $\Sym(\bfV)$}\\
    \bfS_{\lambda}(-)\textbf{ : }&\text{the Schur functor associated to the partition $\lambda$}\\
    \bfL,P_a,G_a,H_d\textbf{ : }&\text{subgroups of $\P$}\\
    \mathscr{S}_a\textbf{ : }&\text{the functor $\Rep(\P)\to\Rep(P_a)$ taking $G_a$-invariants}\\
    T_{\underline{\lambda}},S_{\underline{\lambda}}\textbf{ : }&\text{the indecomposable injective and simple $\P$-representations}\\
    \frakp_d\textbf{ : }&\text{the $\P$-prime ideals of $\bfA$}\\
    A_d\textbf{ : }&\text{the quotient rings $\bfA/\frakp_d$}\\
    \FB^{\otimes n},\FB(n),\FI(n),\scrC_d\textbf{ : }&\text{combinatorial categories of $[n]$-weighted finite sets}
\end{align*}

\subsection*{Acknowledgements}
The author is grateful to Andrew Snowden for his guidance, as well as for many valuable discussions, suggestions, and comments. The author also thanks Jordan Ellenberg and Karthik Ganapathy for helpful discussions, as well as Austyn Simpson for comments on a draft.

\section{Preliminaries}\label{sec:prelim}

In this section, we provide some background on representations of the group $\P$, and preliminary results and definitions on $\bfA$-modules.

\subsection{Representation theory of $\P$}\label{subsec:prelim Rep(P)}

Let $\GL=\bigcup_{i=1}^\infty\GL_i(\C)$ denote the infinite general linear group with defining representation $\bfV=\C^\infty$. Equip $\bfV$ with a flag $0=\bfV_0\subset\bfV_1\subset\cdots\subset\bfV_n=\bfV$ of length $n$, where each associated graded piece $\bfV_{(i)}=\bfV_i/\bfV_{i-1}=\C^\infty$ is infinite-dimensional, with basis $\{e_{ij}:j\geq 1\}$. Define $\P\subset\GL$ to be the subgroup preserving this flag. We call $\bfV$ equipped with the choice of flag the \emph{standard representation} of $\P$, and say a representation of $\P$ is \emph{polynomial} if it occurs as a subquotient of a (possibly infinite) direct sum of tensor powers of the standard representation. We let $\Rep(\P)$ denote the category of polynomial representations of $\P$. We review some of the important aspects of polynomial representation theory of the group $\P$, which was studied in \cite{Yu}.

\begin{enumerate}
    \item Let $\FB(n)$ denote the following combinatorial category: objects are $[n]$-weighted finite sets, and morphisms are bijections that do not decrease weights. An \emph{$\FB(n)$-module} is a functor $\FB(n)\to\Vec$, and the tensor product of $\FB(n)$-modules is defined by Day convolution. The categories of finite length $\FB(n)$-modules and finite length polynomial $\P$-representations are equivalent as tensor categories \cite[Theorem 1.1]{Yu}.
    \item Let $\bfL\subset\P$ denote the Levi subgroup, so $\bfL\cong\prod_{i=1}^n\GL$. Polynomial representations of $\bfL$ are semisimple, and every polynomial $\P$-representation can be considered as a polynomial $\bfL$-representation via restriction. Let $\FB$ denote the combinatorial category of finite sets and bijections, and let $\FB^{\otimes n}$ denote its $n$-fold product. One can consider $\FB^{\otimes n}$ as a subcategory of $\FB(n)$ using the $[n]$-weighting, and so $\FB(n)$-modules can also be regarded as $\FB^{\otimes n}$-modules. The categories of polynomial $\bfL$-representations and $\FB^{\otimes n}$-modules are equivalent using classical Schur--Weyl duality, i.e., the equivalence between $\FB$-modules and polynomial representations of $\GL$ \cite[\S 5.4]{SStca}.

    Given a polynomial $\bfL$-representation $V$, there is a decomposition of $V$ with respect to the weight spaces of the torus action. We identify weights with $n$-tuples of nonnegative integer sequences $\alpha=((\alpha_{1j}),\ldots,(\alpha_{nj}))$ such that each sequence $(\alpha_{ij})_{j\geq 1}$ is eventually zero. If two weights $\alpha,\beta$ have disjoint supports, i.e., for any given $i\in[n]$ and $j\geq 1$, $\alpha_{ij}$ and $\beta_{ij}$ are not both nonzero, then we say that the weights are \emph{disjoint}.
    \item The indecomposable injective objects $T_{\underline{\lambda}}$ of $\Rep(\P)$ are indexed by $n$-tuples of partitions $\underline{\lambda}=(\lambda^1,\ldots,\lambda^n)$ \cite[Proposition 4.8]{Yu}. The object $T_{\underline{\lambda}}$ is given by
    \[T_{\underline{\lambda}}=\bigotimes_{i=1}^n\bfS_{\lambda^i}(\bfV/\bfV_{n-i}),\]
    where $\bfS_{\lambda^i}(-)$ denotes the Schur functor corresponding to the partition $\lambda^i$. Furthermore, it is the injective envelope of the simple $\P$-representation
    \[S_{\underline{\lambda}}=\bigotimes_{i=1}^n\bfS_{\lambda^i}(\bfV_{n-i+1}/\bfV_{n-i}).\]
    The projective cover of the simple $S_{\underline{\lambda}}$ is the representation $\bigotimes_{i=1}^n\bfS_{\lambda^i}(\bfV_{n-i+1}).$
    Finite length objects have finite injective and projective resolutions.
    
    Note that the simple $\P$-representations are given by simple $\bfL$-representations, and so the Grothendieck ring $\thrmK(\Rep(\P))$ can be identified with the ring $\Lambda^{\otimes n}$, where $\Lambda$ denotes the ring of symmetric polynomials.
    \item Let $a\in\N$ be a nonnegative integer. Let $G_a$ denote the subgroup of $\P$ consisting of block diagonal matrices such that each nonzero block is of the form
\[\begin{pmatrix} \id_a & 0 \\ 0 & * \end{pmatrix},\]
where $\id_a$ denotes the $a\times a$ identity matrix. Let $P_a$ denote the subgroup of $\P$ consisting of matrices where the $(i,j)$-blocks are each block matrices of the form
\[\begin{pmatrix} * & 0 \\ 0 & \id \end{pmatrix}\quad\text{if }i=j,\quad\begin{pmatrix} * & 0 \\ 0 & 0\end{pmatrix}\quad\text{if }i<j,\]
where the top left blocks are of size $a\times a$ and the bottom right blocks are of size $(\infty-a)\times(\infty-a)$ in both cases. We have that $G_a$ and $P_a$ commute with each other, and so we have a left-exact specialization functor $\mathscr{S}_a:\Rep(\P)\to\Rep(P_a)$ defined by taking $G_a$ invariants, $V\mapsto V^{G_a}$. By \cite[Lemma 3.1]{Yu}, this is a tensor functor. Furthermore, for any element $v\in V$, we have that $v\in\mathscr{S}_a(V)$ for any $a\gg 0$, and the $\bfL$-weight of any element in $\mathscr{S}_a(V)$ is given by an $n$-tuple $\alpha=((\alpha_{ij}))$ of sequences such that for each $i$, the sequence $(\alpha_{ij})_{j\geq 1}$ is $0$ for $j>a$. 
\item Let $\End(\bfV)$ be the monoid of endomorphisms of $\bfV$ that preserve the flag structure on $\bfV$, i.e., linear maps $f:\bfV\to\bfV$ such that $f(\bfV_i)\subset\bfV_i$ for all $i$. For any polynomial representation $V$ of $\P$, there is an induced action of $\End(\bfV)$. In particular, let $a\geq 1$, and let $W\subset V$ be the subspace spanned by elements with $\bfL$-weight disjoint from the weight $((1^a),\ldots,(1^a))$. Then if $g,h\in\P$, there exists an endomorphism $f$ whose action on $\mathscr{S}_a(V)$ agrees with that of $g$ on $\mathscr{S}_a(V)$, while the action of $f$ on elements in $W$ agrees with that of $h$.
\end{enumerate}

\subsection{$\Mod_\bfA$ and noetherianity}\label{subsec:prelim Amods}

Let $\bfA$ denote the commutative algebra object $\Sym(\bfV)$ in $\Rep(\P)$. We consider $\bfA$ as an infinite variable polynomial ring over $\C$ with variables $\{x_{ij}:i\in[n],j\geq 1\}$, where the corresponding basis for the associated graded piece $\bfV_i/\bfV_{i-1}$ is given by the variables $\{x_{ij}:j\geq 1\}$. We let $\abs{\bfA}$ denote the same infinite variable polynomial ring but without the $\P$-representation structure.

An $\bfA$\emph{-module} $M$ is a module object in $\Rep(\P)$, i.e., $M$ is a polynomial $\P$-representation equipped with a multiplication map $\bfA\otimes M\to M$ that is equivariant with respect to $\P$. Let $\Mod_\bfA$ denote the category of all $\bfA$-modules. An $\bfA$-module $M$ is said to be \emph{finitely generated} if there exist finitely many elements such that their $\P$-orbits generate $M$ as an $\abs{\bfA}$-module. 
If $M$ is a finitely generated $\bfA$-module, then there exists a surjection $\bfA\otimes V\to M$, where $V$ is a polynomial $\P$-representation of finite length. We say an $\bfA$-module $M$ is \emph{noetherian} if any submodule is finitely generated.

Another finiteness condition for $\bfA$-modules is defined using the structure of being a polynomial representation of $\bfL$. Polynomial representations of $\bfL$ are semisimple, and since the simple $\P$-representations are also the simple $\bfL$-representations, such representations are also indexed by $n$-tuples $\underline{\lambda}=(\lambda^1,\ldots,\lambda^n)$ of partitions. We say a polynomial $\bfL$-representation $V$ is \emph{bounded} if there exists an integer $s$ such that for any simple representation $S_{\underline{\lambda}}$ appearing as a factor in $V$, we have that $\lambda^i_t=0$ for all $i\in[n]$ and $t> s$, i.e., the length of each partition $\lambda^i$ is at most $s$. If $V$ is bounded, we let $\ell(V)$ denote the minimum such $s$

\begin{example}
\begin{enumerate}
    \item Any finite length $\bfL$-representation is bounded.
    \item     As a polynomial $\bfL$-representation, the algebra $\bfA$ is bounded: every simple object appearing is indexed by a tuple of partitions of the form $((a_1),\ldots,(a_n))$ with $a_i\in\N$, and so $\ell(\bfA)=1$.
    \item     If $V,W$ are bounded representations, then one sees that their tensor product $V\otimes W$ is also bounded using the Littlewood--Richardson rule. Therefore, any finitely generated $\bfA$-module $M$ is bounded, since it is a quotient of a bounded representation $\bfA\otimes V$ with $V$ of finite length.
\end{enumerate}
\end{example}

\begin{lemma}\label{lem:P-equiv implies L-equiv}
    Let $M$ be a finitely generated ($\P$-equivariant) $\bfA$-module. Then it is finitely generated as an $\bfL$-equivariant module of $\bfA$.
\end{lemma}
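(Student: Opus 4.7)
The plan is to factor the problem through the structure of finite-length polynomial $\P$-representations recalled in \S\ref{subsec:prelim Rep(P)}. Since $M$ is finitely generated as a $\P$-equivariant $\bfA$-module, the discussion preceding the lemma gives a surjection $\bfA \otimes V \to M$ for some finite-length polynomial $\P$-representation $V$. Because this surjection is automatically $\bfL$-equivariant, it suffices to show that $\bfA \otimes V$ is finitely generated as an $\bfL$-equivariant $\bfA$-module.

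The heart of the argument is then the claim that $V$, viewed as a polynomial $\bfL$-representation via restriction, is itself of finite length. Since polynomial $\bfL$-representations are semisimple and the finite-length $\P$-representations embed into finite direct sums of indecomposable injectives, it suffices to check finite length over $\bfL$ for each $T_{\underline{\lambda}} = \bigotimes_{i=1}^{n} \bfS_{\lambda^i}(\bfV/\bfV_{n-i})$. Here I would use that, as an $\bfL$-representation, each quotient $\bfV/\bfV_{n-i}$ splits as the \emph{finite} direct sum $\bfV_{(n-i+1)} \oplus \cdots \oplus \bfV_{(n)}$ of its graded pieces. Applying the Cauchy (Littlewood--Richardson) decomposition, $\bfS_{\lambda^i}(\bfV/\bfV_{n-i})$ is a finite direct sum of tensor products of Schur functors of the $\bfV_{(j)}$, and taking the tensor product over $i$ and re-expanding shows that $T_{\underline{\lambda}}$ is a finite direct sum of simple $\bfL$-representations $S_{\underline{\mu}}$.

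Once finite length of $V$ over $\bfL$ is in hand, each simple summand of $V$ is generated as an $\bfL$-representation by a single highest weight vector, so $V$ is generated as an $\bfL$-representation by finitely many elements. Consequently $\bfA \otimes V$ is generated by these same finitely many elements as an $\bfL$-equivariant $\bfA$-module, and the surjection transports the property to $M$.

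The only mildly subtle point is the finite-length assertion, and there the real content is that in the formula for $T_{\underline{\lambda}}$ the quotients $\bfV/\bfV_{n-i}$ have only finitely many graded pieces as $\bfL$-representations — an observation that relies on $n$ being finite, not on the individual $\bfV_{(j)}$ being finite-dimensional (they are not). Everything else is formal.
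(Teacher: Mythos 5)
Your argument is correct, and it reaches the same pivotal claim as the paper — that a finite length polynomial $\P$-representation restricts to a finite length polynomial $\bfL$-representation — but by a different route. The paper gets this claim purely combinatorially: by \cite[Theorem 1.1]{Yu} finite length $\P$-representations are the same as finite length $\FB(n)$-modules, and a finite length $\FB(n)$-module is visibly of finite length over the subcategory $\FB^{\otimes n}$, which corresponds to $\bfL$. You instead argue inside representation theory: embed the finite length $\P$-representation into a finite direct sum of the indecomposable injectives $T_{\underline{\lambda}}=\bigotimes_i\bfS_{\lambda^i}(\bfV/\bfV_{n-i})$ (legitimate, since the socle of a finite length object is essential and the injective envelopes of simples are the $T_{\underline{\lambda}}$'s, as recalled in \S\ref{subsec:prelim Rep(P)}), split each $\bfV/\bfV_{n-i}$ over $\bfL$ into its finitely many graded pieces, and expand via the Cauchy/Littlewood--Richardson decomposition of Schur functors of direct sums; semisimplicity of $\Rep(\bfL)$ then passes finite length to the subobject. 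Both proofs lean on \cite{Yu}, just on different ingredients (the tensor equivalence with $\FB(n)$-modules versus the classification of injectives); yours has the merit of making the $\bfL$-decomposition of the injectives completely explicit and of spelling out the final bookkeeping (a simple $\bfL$-summand is generated by one vector, so $\bfA\otimes V$ and hence $M$ is $\bfL$-finitely generated), at the cost of a slightly longer argument — one could even shortcut your injective-hull step by using the projective covers $\bigotimes_i\bfS_{\lambda^i}(\bfV_{n-i+1})$ listed in the same preliminaries, which surject onto $V$ and decompose over $\bfL$ by the identical Littlewood--Richardson computation.
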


\begin{proof}
    By \cite[Theorem 1.1]{Yu}, the categories of finite length polynomial $\P$-representations and finite length $\FB(n)$-modules are equivalent. A finite length $\FB(n)$-module is also finite length as a module over the subcategory $\FB^{\otimes n}\subset\FB(n)$. Thus, a finite length polynomial $\P$-representation also has finite length as a representation of $\bfL\subset\P$.

    Now, suppose $M$ is a finitely generated $\bfA$-module. Then $M$ is the quotient of an $\bfA$-module of the form $\bfA\otimes V$, where $V$ is a finite length polynomial $\P$-representation. By the previous paragraph, $V$ is also a finite length polynomial $\bfL$-representation. Thus, $M$ is also finitely generated as an $\bfL$-equivariant module of $\bfA$.
\end{proof}

\begin{proposition}
        $\bfA$ is noetherian as an algebra, i.e., every finitely generated $\bfA$-module is noetherian.
\end{proposition}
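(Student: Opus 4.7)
The plan is to cascade noetherianity downward from larger equivariance groups to smaller ones, eventually invoking a known $\GL$-noetherianity theorem of Sam--Snowden. The driving observation is elementary: if $H \subset G$ and a module is finitely generated using $H$-orbits of some finite set $S$, then the same $S$ works as a $G$-generating set, since $H\cdot S \subset G \cdot S$.

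First I would use Lemma~\ref{lem:P-equiv implies L-equiv} to reduce to showing that $\bfA$ is noetherian as an $\bfL$-equivariant algebra. Given a finitely generated $\bfA$-module $M$ and a $\P$-submodule $N\subset M$, Lemma~\ref{lem:P-equiv implies L-equiv} makes $M$ finitely generated as an $\bfL$-equivariant $\bfA$-module; assuming $\bfL$-noetherianity of $\bfA$, the $\bfL$-submodule $N$ is finitely $\bfL$-generated by some finite set $S$; and the inclusion $\bfL \subset \P$ then forces $S$ to be a $\P$-generating set as well.

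Next I would reduce $\bfL$-noetherianity to the classical $\GL$-noetherianity along the diagonal $\GL \hookrightarrow \bfL = \prod_{i=1}^n \GL$. As an $\bfL$-representation, $\bfV = \bigoplus_{i=1}^n \bfV_{(i)}$ is identified with $\C^\infty \otimes \C^n$, and restricting the $\bfL$-action to the diagonal $\GL$ recovers the standard $\GL$-action on $\C^\infty \otimes \C^n$; so $\bfA$ is none other than the $\GL$-equivariant algebra $\Sym(\C^\infty \otimes \C^n)$, proven noetherian by Sam--Snowden. An analogous cascade then runs: if $M$ is $\bfL$-finitely generated via a surjection $\bfA \otimes V \twoheadrightarrow M$ with $V$ finite-length as $\bfL$-representation, then $V$ restricts to a finite-length $\GL$-representation (apply the Littlewood--Richardson rule to each tensor factor in a simple summand $\bfS_{\lambda^1} \otimes \cdots \otimes \bfS_{\lambda^n}$), whence $M$ is $\GL$-finitely generated; $\GL$-noetherianity yields that any $\bfL$-submodule of $M$ is $\GL$-finitely generated; and once again the generators transfer upward from $\GL$ to $\bfL$.

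The main obstacle is essentially a matter of citation --- securing the appropriate form of Sam--Snowden's noetherianity theorem for $\Sym(\C^\infty\otimes\C^n)$ as the input to the second step. Once that is in hand, all the remaining reductions are formal manipulations built on the one-line principle that enlarging the symmetry group can only decrease the number of generators needed.
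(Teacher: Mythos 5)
Your argument is correct, and its first half coincides with the paper's: both reduce from $\P$ to $\bfL$ via Lemma~\ref{lem:P-equiv implies L-equiv} together with the trivial observation that $\P$-submodules are $\bfL$-submodules and that generating sets transfer up the group. Where you diverge is in how $\bfL$-noetherianity of $\bfA$ is obtained. The paper invokes a boundedness criterion — the multivariate analogue of \cite[Proposition 2.4]{NSSnoethdeg2} — using that $\bfA$ is a bounded, finitely generated algebra in polynomial $\bfL$-representations. You instead restrict once more, along the diagonal $\GL\hookrightarrow\bfL=\prod_{i=1}^n\GL$, identify $\bfA$ with the $\GL$-algebra $\Sym(\C^\infty\otimes\C^n)$ (as an $\bfL$-representation $\bfV\cong\bigoplus_i\bfV_{(i)}$, so the diagonal action is the standard one on $\C^\infty\otimes\C^n$), check via Littlewood--Richardson that finite length is preserved under this restriction, and then quote the known module-theoretic noetherianity of $\Sym(\C^\infty\otimes\C^d)$ for $\GL$-equivariant modules, which is indeed available (it underlies \cite{SSglII}, and follows from noetherianity of the relevant combinatorial categories in Sam--Snowden's Gr\"obner-theoretic work); topological noetherianity alone would not suffice, so the precise citation matters, as you note. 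Your route has the advantage of sidestepping the need for a ``multivariate version'' of the boundedness criterion by collapsing the product of groups to a single $\GL$, at the cost of leaning on the specific (degree-one) form of $\bfA$; the paper's boundedness argument is more robust in that it applies verbatim to any bounded finitely generated $\bfL$-algebra, not just $\Sym$ of a degree-one representation.
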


\begin{proof}
    The ring $\bfA$ is finitely generated as an algebra object in the category of polynomial $\bfL$-representations, and it is essentially bounded. Then by the multivariate version of \cite[Proposition 2.4]{NSSnoethdeg2}, every finitely generated $\bfL$-equivariant module of $\bfA$ is noetherian. If $M$ is a finitely generated $\P$-equivariant module of $\bfA$, then it is finitely generated as an $\bfL$-equivariant module by Lemma~\ref{lem:P-equiv implies L-equiv}. Furthermore, any chain of $\P$-equivariant submodules of $M$ is also a chain of $\bfL$-equivariant submodules. Thus, $M$ as a $\P$-equivariant module satisfies the ascending chain condition.
\end{proof}

\subsection{Equivariant ideals and $\Spec_\P(\bfA)$}\label{subsec:prelim spec}

A \emph{$\P$-ideal} of $\bfA$ is an $\bfA$-submodule of $\bfA$. Equivalently, it is an ideal of the ring $\abs{\bfA}$ in the usual sense that is stable under the action of $\P$. A $\P$-ideal $\frakp$ is said to be a \emph{$\P$-prime ideal} if for any $\P$-ideals $\fraka,\frakb$ such that $\fraka\frakb\subset\frakp$, we have that either $\fraka\subset\frakp$ or $\frakb\subset\frakp$.

For $i=0,\ldots,n$, let $\frakp_i\subset\bfA$ be the ideal
\[\frakp_i=(x_{jk}:1\leq j\leq n-i,k\geq 1),\]
so $\frakp_n$ is the zero ideal and $\frakp_0$ is the homogeneous maximal ideal of $\bfA$.
In particular, we have that
\[0=\frakp_n\subset\frakp_{n-1}\subset\cdots\subset\frakp_0\subset\bfA.\] 
It is clear that sums of powers of these ideals are $\P$-ideals. The following result shows that all $\P$-ideals are of this form, and that the $\frakp_i$'s constitute all of the $\P$-prime ideals of $\bfA$.

\begin{theorem}\label{thm:A ideals}
    Every $\P$-ideal is of the form $\frakp_{i_1}^{a_1}+\cdots+\frakp_{i_m}^{a_m}$, with $n\geq i_1>\cdots>i_m\geq 0$ and $1\leq a_1<\cdots<a_m$. Every $\P$-prime ideal is of the form $\frakp_i$.
\end{theorem}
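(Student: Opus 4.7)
The plan is to establish the classification in two steps. First, I would verify the forward direction: each $\frakp_i = \bfA \cdot \bfV_{n-i}$ is a $\P$-ideal because $\bfV_{n-i}$ is a $\P$-subrepresentation of $\bfV$ by the flag structure, so any sum of powers of the $\frakp_i$'s is a $\P$-ideal. Moreover, each quotient $\bfA/\frakp_i \cong \C[x_{jk} : j > n-i, k \geq 1]$ is a polynomial ring, hence a domain, so $\frakp_i$ is prime.

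For the classification of arbitrary $\P$-ideals $I$, I would analyze the $\bfL$-isotypic decomposition of $I$. As a polynomial $\bfL$-representation, $\bfA = \bigotimes_{s=1}^n \Sym(\bfV_{(s)}) = \bigoplus_{b \in \N^n} M(b)$ with $M(b) := \bigotimes_s \Sym^{b_s}(\bfV_{(s)})$ irreducible, and this decomposition is multiplicity-free. Consequently $I$ is determined by a unique subset $S_I \subseteq \N^n$ with $I = \bigoplus_{b \in S_I} M(b)$. The set $S_I$ must be closed under two operations: (i) adding any $e_s$ (from $\bfA$-multiplication) and (ii) transferring a unit from row $s$ to row $s' < s$ (from the nilpotent lowering operators in $\P$, which map $\bfV_{(s)}$ into $\bfV_{(s')}$). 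These two closure conditions make $S_I$ an up-set in $\N^n$ under the partial order $b \preceq b'$ iff $B_r(b) \leq B_r(b')$ for every $r \in [n]$, where $B_r(b) := b_1 + \cdots + b_r$.

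The heart of the argument is then to show that every such up-set is a finite union of half-spaces of the form $\{b : B_{n-i_t}(b) \geq a_t\}$, each corresponding to a summand $\frakp_{i_t}^{a_t}$. I would proceed by induction on $n$. The base case $n = 1$ is the classical result that the $\GL(\C^\infty)$-stable ideals of $\C[x_k : k \geq 1]$ are $0$ and the powers of the maximal ideal. For the inductive step, I would consider the image $\bar{I}$ of $I$ in $\bfA/\frakp_{n-1}$, which inherits the structure of an ideal equivariant for the parabolic in $\GL(\bfV/\bfV_1)$ preserving a flag of length $n-1$; by induction $\bar{I}$ decomposes as a sum of powers of the corresponding primes, and lifting this to $I$ together with a ``bottom-row'' term $\frakp_{n-1}^{a_0}$ (for the smallest $a_0$ with $\frakp_{n-1}^{a_0} \subseteq I$) would recover the full decomposition. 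For the prime-ideal statement, once the general classification is in hand, I would check that any proper sum $\frakp_{i_1}^{a_1} + \cdots + \frakp_{i_m}^{a_m}$ with $m \geq 2$ or $a_1 \geq 2$ fails to be prime by producing explicit witnesses built from the multiplicative relations among the $\frakp_i$'s.

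The main obstacle I anticipate is the combinatorial identification in the inductive step: showing that every admissible $S_I$ is precisely a finite union of half-spaces rather than a more general up-set. The subtlety is that closure under the parabolic lowering operators places constraints on the minimal generators of $S_I$ that are fundamentally different from those for generic up-sets in $(\N^n, \preceq)$, and carefully exploiting this constraint — ensuring that the lift from $\bar{I}$ back to $I$ respects the sum-of-powers form — is where the bulk of the technical effort would go. This is also where the ``fewer indecomposable $\P$-submodules compared to Levi submodules'' phenomenon is crucial: the parabolic lowering operators force certain $M(b)$'s to appear together in any $\P$-submodule, severely constraining the possible shapes of $S_I$ compared to the arbitrary subsets that would be admissible for a Levi-equivariant ideal.
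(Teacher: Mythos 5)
Your reduction is set up correctly, and it is in fact finer than the argument the paper records: the multiplicity-free $\bfL$-decomposition $\bfA=\bigoplus_{b\in\N^n}M(b)$ with $M(b)=\bigotimes_s\Sym^{b_s}(\bfV_{(s)})$, the two closure conditions on $S_I$, and the identification of $\P$-ideals with up-sets for the order $b\preceq b'$ iff $b_1+\cdots+b_r\le b'_1+\cdots+b'_r$ for all $r$ are all sound. The problem is precisely the step you defer to the inductive argument: the claim that every admissible up-set is a finite union of half-spaces is false, so no amount of care in the induction can close it. Already for $n=2$, the up-set generated by $(1,1)$, namely $S=\{b:b_1\ge 1,\ b_1+b_2\ge 2\}$, is admissible, and the corresponding subspace of $\bfA$ is a genuine $\P$-ideal: it is $\frakp_1\frakp_0=\frakp_1\cap\frakp_0^2$, the span of all monomials of degree at least two involving some $x_{1j}$ (a product of $\P$-stable ideals is $\P$-stable since $\P$ acts by ring automorphisms). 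But $S$ is not a union of half-spaces: it contains $(1,1)$ yet neither $(1,0)$ nor $(0,2)$, whereas any union $\{b_1\ge a\}\cup\{b_1+b_2\ge c\}$ containing $(1,1)$ contains one of those two points. Equivalently, $\frakp_1\frakp_0$ contains $x_{11}x_{21}$ but neither $x_{11}$ nor $x_{21}x_{22}$, so it is not a sum of powers of the $\frakp_i$; the analogous product works for every $n\ge 2$. The structural reason is that the up-set generated by a single $b$ is an intersection of half-spaces, i.e.\ the $\P$-ideal generated by $M(b)$ is $\bigcap_r\frakp_{n-r}^{\,b_1+\cdots+b_r}$, not a sum of powers.

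Note that this conflicts with the first sentence of the statement itself, not only with your route: the same example contradicts the opening claim of the paper's proof, since $\bfV_1\cdot\bfV\subset\Sym^2(\bfV)$ (the degree-two piece of $\frakp_1\frakp_0$ when $n=2$) is a $\P$-submodule whose only proper nonzero submodule is $\Sym^2(\bfV_1)$, hence indecomposable, and it is not of the form $\Sym^a(\bfV_i)$. What your framework does prove, directly and without the full classification, is the second sentence: if $S$ is the up-set of a proper nonzero $\P$-prime, then $b+c\in S$ forces $b\in S$ or $c\in S$ (because $M(b)\cdot M(c)$ spans $M(b+c)$), so every $\preceq$-minimal element of $S$ has total degree one, giving $S=\{b:b_1+\cdots+b_s\ge 1\}$ for some $s$, i.e.\ the prime is $\frakp_{n-s}$. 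I would suggest recording the classification your setup actually yields, namely that $\P$-ideals correspond to arbitrary up-sets in $(\N^n,\preceq)$, equivalently are finite sums of ideals of the form $\bigcap_r\frakp_{n-r}^{c_r}$, and deriving the chain of $\P$-primes from that; this keeps Corollary~\ref{cor:dim specA} intact while correcting the first assertion.
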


\begin{proof}
    Every indecomposable $\P$-submodule of $\bfA=\Sym(\bfV)$ is of the form $\Sym^a(\bfV_i)$, and so all $\P$-ideals are given by sums of powers of the $\frakp_i$'s. If $n\geq i>j\geq 0$ and $a\geq b\geq 1$, then $\frakp_i^a+\frakp_j^b=\frakp_j^b$. Thus, all $\P$-ideals can be written in the desired form.

    Using this characterization of $\P$-ideals, one sees that the $\frakp_i$'s are indeed $\P$-prime. Furthermore, these are all of the $\P$-primes, since $\P$-prime ideals are prime in the usual sense.
\end{proof}

We define the $\P$\emph{-equivariant spectrum} $\Spec_\P(\bfA)$ to be the set of $\P$-prime ideals of $\bfA$ equipped with the Zariski topology. The following corollary will be used to compute the Krull--Gabriel dimension of the category $\Mod_\bfA$ in \S\ref{sec:gen}.

\begin{corollary}\label{cor:dim specA}
    The Krull dimension of $\Spec_\P(\bfA)$ is $n$.
\end{corollary}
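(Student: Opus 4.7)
The plan is to read off the Krull dimension of $\Spec_\P(\bfA)$ directly from the classification of $\P$-prime ideals given in Theorem~\ref{thm:A ideals}. That theorem tells us that the $\P$-primes are exhausted by the $n+1$ ideals $\frakp_0,\frakp_1,\ldots,\frakp_n$, so there is nothing left to classify; the entire content of the corollary is to exhibit a chain of maximal length among these primes and to confirm that it cannot be extended.

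First I would record the explicit strict chain
\[
0=\frakp_n\subsetneq\frakp_{n-1}\subsetneq\cdots\subsetneq\frakp_1\subsetneq\frakp_0,
\]
noting that each inclusion is strict because $\frakp_{i-1}$ contains the variable $x_{n-i+1,1}$ which lies outside $\frakp_i$ (since $\frakp_i$ is generated by the variables $x_{jk}$ with $j\leq n-i$). This exhibits a chain of $\P$-primes with exactly $n$ strict containments, giving the lower bound $\dim\Spec_\P(\bfA)\geq n$.

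For the matching upper bound, I would appeal again to Theorem~\ref{thm:A ideals}: any chain of $\P$-primes is a subchain of the total list $\{\frakp_0,\ldots,\frakp_n\}$, which has $n+1$ elements and hence admits no chain with more than $n$ strict inclusions. Combining these two observations gives $\dim\Spec_\P(\bfA)=n$.

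There is no substantive obstacle here since all the heavy lifting has been done in Theorem~\ref{thm:A ideals}; the only point that requires a sentence of justification is that the chain displayed above is strict, and that is immediate from the defining generators of the $\frakp_i$.
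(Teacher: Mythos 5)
Your proposal is correct and is essentially the paper's own argument: the corollary is drawn directly from Theorem~\ref{thm:A ideals}, with the explicit strict chain $\frakp_n\subsetneq\cdots\subsetneq\frakp_0$ giving the lower bound and the fact that the $\frakp_i$ exhaust all $\P$-primes giving the upper bound. Your extra sentence verifying strictness via the variable $x_{n-i+1,1}$ is a fine (and accurate) detail that the paper leaves implicit.
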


\begin{remark}
    For a ring $A$ that may or may not be $\P$-equivariant, we use $\Spec(A)$ to denote the usual spectrum of $A$.
\end{remark}

\subsection{Torsion modules}

For $d=0,\ldots,n$, let $A_d=\bfA/\frakp_d$. We say an $A_d$-module $M$ is \emph{torsion} if every element of $M$ is annihilated by a nonzero element of $A_d$. To understand the structure of $\Mod_\bfA$, we will study the category $\Mod_{A_d}$ of $A_d$-modules and its Serre quotient by the subcategory of torsion modules. The following result gives another characterization of torsion modules.

\begin{proposition}\label{prop:torsion}
    Let $M$ be a finitely generated $A_d$-module. Then $M$ is torsion if and only if it is annihilated by a nonzero $\P$-ideal.
\end{proposition}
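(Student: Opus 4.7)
The forward direction is immediate: if $I \subseteq \Ann(M)$ is a nonzero $\P$-ideal, then any nonzero $a \in I$ annihilates every element of $M$, so $M$ is torsion.

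For the converse, assume $M$ is torsion and finitely generated. The annihilator $\Ann(M) \subset A_d$ is automatically a $\P$-stable ideal (as both $A_d$ and $M$ are $\P$-stable), so it suffices to prove $\Ann(M) \neq 0$. By Lemma~\ref{lem:P-equiv implies L-equiv}, $M$ is finitely generated $\bfL$-equivariantly, and using semisimplicity of polynomial $\bfL$-representations we may write $M = \sum_{i=1}^r A_d \cdot S_i$ with each $S_i \subset M$ a simple $\bfL$-subrepresentation of the form $\bigotimes_j \bfS_{\lambda^{i,j}}(\bfV_{(j)})$. Then $\Ann(M) = \bigcap_{i=1}^r \Ann(S_i)$, being a finite intersection of ideals in the domain $A_d$, is nonzero as soon as each $\Ann(S_i) \neq 0$. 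So the task reduces to showing $\Ann(S) \neq 0$ for a single simple $\bfL$-subrepresentation $S$ of a torsion $\bfA$-module.

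For this core claim, I would pass to a finite-variable setting via the specialization functor $\mathscr{S}_a$. Pick $a$ at least the maximum length of the partitions defining $S$, so $\mathscr{S}_a(S)$ is nonzero and finite-dimensional. By exactness of $\mathscr{S}_a$ on polynomial representations and finite-dimensionality of $\mathscr{S}_a(V)$ for finite-length $V$, the module $\mathscr{S}_a(M)$ is finitely generated over the Noetherian polynomial ring $\mathscr{S}_a(A_d) = \C[x_{jk} : n-d < j \leq n,\, k \leq a]$. Using property (5) of the preliminaries, I apply the projection endomorphism $\phi \in \End(\bfV)$ onto $\mathscr{S}_a(\bfV)$: $\phi$ fixes $\mathscr{S}_a(S)$ elementwise, and its induced action on $A_d$ sends $x_{jk}$ to $x_{jk}$ if $k \leq a$ and to $0$ otherwise. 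Thus any torsion relation $fv = 0$ for $v \in \mathscr{S}_a(S)$ yields $\phi(f)v = 0$ with $\phi(f) \in \mathscr{S}_a(A_d)$; after enlarging $a$ so that $f$ itself lies in $\mathscr{S}_a(A_d)$ (possible since $f$ involves only finitely many variables), this shows $\mathscr{S}_a(M)$ is a finitely generated classically torsion module over $\mathscr{S}_a(A_d)$, and classical Noetherian commutative algebra produces a nonzero annihilator $J_a \subset \mathscr{S}_a(A_d) \subset A_d$.

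The main obstacle is the final step: upgrading the nonzero annihilator $J_a$ of $\mathscr{S}_a(M)$ to a nonzero annihilator of all of $S$ (equivalently, of $M$) in $A_d$. My plan is to exploit the interplay of the specializations as $a$ grows together with the $\P$-equivariance: any $m \in M$ lies in $\mathscr{S}_{a'}(M)$ for sufficiently large $a'$, and the ideals $J_{a'}$ for varying $a'$ should form a compatible directed system. Noetherianity of $M$ as a $\P$-equivariant module ought to force this process to stabilize at a finite stage, yielding a single nonzero element of $A_d$ annihilating all of $M$. The delicate point---where I expect the bulk of the work to lie---is ensuring that the lifting from $\mathscr{S}_a(A_d)$ into $A_d$ produces a genuinely nonzero annihilator and not just a formal intersection; this requires careful use of property (5) to propagate annihilators consistently across specializations via the $\P$-action, likely combined with the structural description of $\P$-stable ideals from Theorem~\ref{thm:A ideals}.
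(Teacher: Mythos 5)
The forward direction and the reduction to finitely many generators are fine, but the core of your argument has a genuine gap already at the specialization step. For a fixed $a$, the claim that $\mathscr{S}_a(M)$ is a torsion $\mathscr{S}_a(A_d)$-module is not established by your projection trick: the projection $\phi$ can kill the annihilating element. For instance, if $v\in\mathscr{S}_a(M)$ is annihilated by $f=x_{n,a+1}$, then $\phi(f)=0$ and the relation becomes vacuous; and ``enlarging $a$ so that $f\in\mathscr{S}_a(A_d)$'' changes both the ring and the module, so new elements of $\mathscr{S}_{a'}(M)$ require annihilators involving yet larger columns and the procedure never closes at a finite stage. The underlying problem is that $G_a$-invariance of $v$ only makes $\Ann_{A_d}(v)$ a $G_a$-stable ideal, and a nonzero $G_a$-stable ideal need not meet $\mathscr{S}_a(A_d)$ at all (e.g.\ the ideal generated by $x_{n,k}$ for $k>a$); so no projection or averaging over $G_a$ can manufacture an annihilator in the small ring, and knowing that $\Ann_{A_d}(v)\cap\mathscr{S}_a(A_d)\neq 0$ is essentially equivalent to the proposition itself. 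The same circularity infects your final ``upgrade'' step, which you acknowledge is the main obstacle: the ideals $J_a$ live in different subrings, their limit or intersection could a priori be zero, and noetherianity of $M$ gives the ascending chain condition on $\P$-submodules, not stabilization of annihilators of the specializations.

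What is missing is a mechanism for propagating a single torsion relation along the entire $\P$-orbit, using group (or monoid) elements far outside $G_a$, and this is exactly how the paper argues. Fix $x\in M$ and a nonzero $a\in A_d$ with $ax=0$. An induction using the Leibniz rule in $\mathscr{U}(\frakg)$, $\frakg=\mathrm{Lie}(\P)$, gives $a^{k+1}X_k\cdots X_1x=0$ for any $X_1,\dots,X_k\in\frakg$. Inside the $\P$-subrepresentation $V$ generated by $x$ one then chooses another generator $y$ of $V$ whose $\bfL$-weight is disjoint from that of $a$; by the previous step $a^ky=0$ for some $k$, and by property (5) of \S\ref{subsec:prelim Rep(P)} the action of any $g\in\P$ on $y$ can be realized by a flag-preserving endomorphism that fixes $a$, whence $a^k(gy)=0$ for all $g\in\P$, so $a^k$ kills the whole orbit of $x$ and the $\P$-ideal generated by $a^k$ kills $x$. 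Finally, since $\abs{A_d}$ is a domain, one nonzero $a$ works simultaneously for the finitely many generators of $M$, giving a nonzero $\P$-ideal annihilating $M$. Without an argument of this kind (disjoint weights plus the endomorphism monoid, or some substitute), your outline does not yield the uniform annihilator, so the proof as proposed is incomplete.
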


\begin{proof}
It is clear that if $M$ is annihilated by a $\P$-ideal, then it is torsion. Now assume $M$ is nonzero and torsion. Given $x\in M$ nonzero, let $a\in A_d$ nonzero be such that $ax=0$. We claim there exists a $k$ such that $a^k(gx)=0$ for all $g\in\P$. If this holds, then the $\P$-orbit of $a^k$ kills $x$, and so the nonzero $\P$-ideal generated by $a^k$ kills $x$. Then, if $M$ is finitely generated by elements $x_1,\ldots,x_r$, one can find a single nonzero element $a\in A_d$ such that $ax_i=0$ for each $i$; here, $a\neq 0$ since $\abs{A_d}$ is a domain. By the claim, there exists a single $k\gg 0$ for which the nonzero $\P$-ideal generated by $a^k$ kills each $x_i$, and so this $\P$-ideal also annihilates $M$.

Let $\frakg$ denote the Lie algebra of $\P$, and let $\mathscr{U}(\frakg)$ be the universal enveloping algebra. We first show by induction on $k$ that if $X_1,\ldots,X_k\in\frakg$, then $a^{k+1}X_k\cdots X_1x=0$. The $k=0$ case holds by assumption that $ax=0$, so suppose $a^kX_{k-1}\cdots X_1x=0$, and apply $X_k$ to this product:
\begin{align*}
    0&=(X_k(a^k))(X_{k-1}\cdots X_1x)+a^k(X_kX_{k-1}\cdots X_1x)\\
    &=ka^{k-1}(X_ka)(X_{k-1}\cdots X_1x)+a^kX_k\cdots X_1x.
\end{align*}
Then, multiplying by $a$, we see that the first term is $0$ by induction (note that $a^k,X_ka\in A_d$ and therefore commute), and so $a^{k+1}X_k\cdots X_1x=0$ as desired.

Let $V\subset M$ be the $\P$-representation generated by $x$, and let $b\geq 1$ be such that $a\in\mathscr{S}_b(A_d)$. Using the action of the Levi subgroup $\bfL\subset\P$, one sees that there exists $y\in V$ such that the weight of $y$ with respect to $\bfL$ is disjoint from that of $a$, and that $y$ also generates $V$ as a $\P$-representation. 
Let $X\in\mathscr{U}(\frakg)$ be such that $Xx=y$, and so by the above, there exists $k$ such that $a^ky=0$. We now show that this same $k$ is such that $a^k(gx)=0$ for all $g\in\P$.

Fix $g\in\P$. Since $a,y$ have disjoint $\bfL$-weights, there exists an endomorphism $F_g:\bfV\to\bfV$ preserving the flag structure on $\bfV$, and such that the action of $F_g$ on $y$ agrees with that of $g$, 
and the action of $F_g$ on $\mathscr{S}_b(\bfV)$ agrees with that of the identity. In particular, $F_g(a)=a$, while $F_g(y)=gy$. Then,
\[0=F_g(a^ky)=F_g(a^k)F_g(y)=a^k(gy).\]
Every element in the $\P$-orbit of $x$ can be realized as an element in the $\P$-orbit of $y$, and so the proof is complete.
\end{proof}

\section{Representations of stabilizer subgroups}\label{sec:rep H_d}

For each $d\in[n]$, let $\xi_d:\bfV\to\C$ be the linear form defined by
\[\xi_d(e_{ij})=\begin{cases}
1\quad&(i,j)=(n-d+1,1),\\
0\quad&\text{otherwise}.
\end{cases}\]
Let $H_d\subset\P$ be the subgroup that stabilizes $\xi_d$. Concretely, $H_d$ consists of matrices of $\P$ such that the $(n-d+1,n-d+1)$-block is an element of the infinite general affine group $\textbf{GA}(\bfV_{(n-d+1)})$ whose top row is zero except for a one in the upper left corner, and whose top rows of the $(n-d+1,i)$-blocks for $i> n-d+1$ are all zero.

A representation $V$ of $H_d$ is \emph{polynomial} if it occurs as a subquotient of a (possibly infinite) direct sum of tensor powers of $\bfV$. In this section, we describe the category $\Rep(H_d)$ of polynomial $H_d$-representations.

\subsection{Combinatorial description}

For each $d\in[n]$, we define a combinatorial category $\scrC_d$ as follows. Objects are $[n]$-weighted finite sets $S=\bigsqcup_{i=1}^n S_i$, where the elements of the subset $S_i$ have weight $i$. A morphism is an injection of sets $\varphi:S\to T$ such that weights do not decrease, i.e., $\varphi(S_i)\subset\bigsqcup_{j=i}^n T_j$, and $\varphi$ is surjective on $\bigsqcup_{i=1}^{d-1} T_i$. A \emph{$\scrC_d$-module} is a functor $\scrC_d\to\Vec$. Let $\Mod_{\scrC_d}$ denote the category of $\scrC_d$-modules, and let $\Mod_{\scrC_d}^{\mathrm{lf}}$ denote the full subcategory spanned by locally finite length objects. We will show that $\Mod^\lf_{\scrC_d}$ is equivalent to $\Rep(H_d)$. Before showing this equivalence, we describe some important aspects of this category.

Let $\underline{a}\in\N^n$ be an $n$-tuple of nonnegative integers. There is a weighted set with elements of weight $i$ given by $[a_i]=\{1,\ldots,a_i\}$; we also use $\underline{a}$ to denote this object. Every object of $\scrC_d$ is isomorphic to a unique $\underline{a}$, and the group of automorphisms of $\underline{a}$ is the group $\frakS_{a_1}\times\cdots\times\frakS_{a_n}$, which we denote by $\frakS_{\underline{a}}$. 

For a tuple of partitions $\underline{\lambda}$ with $\underline{a}=(\abs{\lambda^i})$, we have the simple $\scrC_d$-module $\bfM_{\underline{\lambda}}$ given by
\[\underline{b}\mapsto\begin{cases}
    M_{\lambda^1}\boxtimes\cdots\boxtimes M_{\lambda^n}&\quad \text{if $\underline{b}=\underline{a}$,}\\
    0&\quad\text{otherwise},
\end{cases}\]
where $M_{\lambda^i}$ denotes the simple $\frakS_{a_i}$-module corresponding to the partition $\lambda^i$. All simple $\scrC_d$-modules are of this form. We also use $\bfM_{\underline{\lambda}}$ to denote the corresponding simple $\frakS_{\underline{a}}$-module.

The category $\scrC_d$ is \emph{inwards finite}, meaning for every object $\underline{a}$, there are only finitely many objects $\underline{b}$ up to isomorphism such that there exists a morphism $\underline{b}\to\underline{a}$ in $\scrC_d$. This implies that the injective envelope of a simple object has finite length, and every $\scrC_d$-module of finite length has finite injective dimension \cite[\S 2.1.5]{SSstabpatterns}.

\subsection{Simple representations}

Let $W_d\subset\bfV$ denote the $H_d$-submodule $W_d=\ker\xi_d\cap\bfV_{n-d+1}$. For an $n$-tuple $\underline{\lambda}$, consider the following polynomial $H_d$-representation:
\begin{align*}
S_{d,\underline{\lambda}}&=\left(\bigotimes_{i\neq d}\bfS_{\lambda^i}(\bfV_{(n-i+1)})\right)\otimes \bfS_{\lambda^{d}}(W_d/\bfV_{n-d}).
\end{align*}
It is irreducible, as it is irreducible as a representation of the Levi subgroup $L_d\subset H_d$.
The goal of this subsection is to show that these are all of the simple $H_d$-modules.

Consider the opposite category $\scrC^\op_d$ of the combinatorial category described above. We define a functor $\mathscr{T}_d:\scrC_d^\op\to\Rep(H_d)$ as follows. For an object $\underline{a}$, define the corresponding polynomial $H_d$-representation, denoted $T_{d,\underline{a}}$, by
\[T_{d,\underline{a}}=\bigotimes_{i=1}^n(\bfV/\bfV_{n-i})^{\otimes a_i}.\]
Given a morphism $\varphi:\underline{a}\to\underline{b}$ in $\scrC_d^\op$, we define $T_{d,\underline{a}}\to T_{d,\underline{b}}$ as follows. If $x\in\underline{a}$ is an element of weight $i$ for which $\varphi(x)$ is defined and is of weight $j\leq i$, then map the tensor factor $\bfV/\bfV_{n-i}$ of $T_{d,\underline{a}}$ corresponding to $x$ to the tensor factor $\bfV/\bfV_{n-j}$ of $T_{d,\underline{b}}$ corresponding to $\varphi(x)$ via the natural quotient. If $\varphi(x)$ is not defined, then $x$ is of weight $i\geq d$ and so $n-i\leq n-d$. The linear functional $\xi_d$ is therefore well-defined on $\bfV/\bfV_{n-i}$ and we apply it to the corresponding tensor factor.

\begin{example}
    Let $n=3$ and $d=3$. Consider the objects $\underline{b}=(1,2,0)$ and $\underline{a}=(0,2,2)$ in $\scrC_3$, and define a morphism $\underline{b}\to\underline{a}$ by mapping an element of weight $2$ in $\underline{b}$ to an element of weight $3$ in $\underline{a}$, and mapping the other two elements of $\underline{b}$ to the elements of weight $2$ in $\underline{a}$. Let $\varphi:\underline{a}\to\underline{b}$ be the induced morphism in $\scrC_3^\op$. Then the corresponding map of polynomial $H_3$-representations is given by
    \[(\bfV/\bfV_1)\otimes(\bfV/\bfV_1)\otimes\bfV\otimes\bfV\to(\bfV/\bfV_2)\otimes(\bfV/\bfV_1)\otimes(\bfV/\bfV_1),\]
    where the first three tensor factors of the left side map onto the right side via the maps $\bfV/\bfV_1\to\bfV/\bfV_2$, $\bfV/\bfV_1\to\bfV/\bfV_1$, and $\bfV\to\bfV/\bfV_1$, while the linear functional $\xi_3:\bfV\to\C$ is applied to the last tensor factor $\bfV$.
\end{example}

For a tuple $\underline{a}$, define the polynomial $H_d$-representation $K_{d,\underline{a}}$ by
\[K_{d,\underline{a}}=\bigcap\ker(T_{d,\underline{a}}\to T_{d,\underline{b}}),\]
where the intersection is over all non-isomorphisms $\underline{a}\to\underline{b}$ in $\scrC_d^\op$; note that this intersection is finite since $\scrC_d$ is inwards finite.

\begin{lemma}\label{lem:H_d kernel}
Let $\underline{a}$ be a tuple. Then
        \[K_{d,\underline{a}}=\left(\bigotimes_{i\neq d}\bfV_{(n-i+1)}^{\otimes a_i}\right)\otimes (W_d/\bfV_{n-d})^{\otimes a_{d}}.\]
\end{lemma}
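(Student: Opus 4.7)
The plan is to prove the claimed equality by two inclusions, exploiting that any morphism $\varphi:\underline{b}\to\underline{a}$ in $\scrC_d$ induces a map $T_{d,\underline{a}}\to T_{d,\underline{b}}$ that is a tensor product of single-slot operations. Indexing slots by elements $x\in\underline{a}$, the slot operation at $x$ is either the quotient $\bfV/\bfV_{n-i}\twoheadrightarrow\bfV/\bfV_{n-j}$ when $\varphi^{-1}(x)$ has weight $j\leq i$, or the linear form $\xi_d$ when $x$ is not in the image (in which case $x$ has weight $\geq d$ by the surjectivity-on-weights-${<}d$ axiom).

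For the inclusion $\supseteq$, I would take a pure tensor in the claimed subspace and verify, for each non-isomorphism $\varphi$, that some slot operation annihilates its factor at $x$. A brief case analysis handles the two failure modes of $\varphi$ (omission of some $x$, or strict weight-raising at some $x$) using the containments $W_d\subset\ker\xi_d$, $\bfV_{n-i+1}\subset\bfV_{n-d}$ when $i>d$, and $\bfV_{n-i+1}\subset\bfV_{n-j}$ when $j<i$.

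For the reverse inclusion, the plan is to exhibit specific $\scrC_d$-morphisms whose induced kernels are single-slot constraints, and then intersect them. For $x\in\underline{a}$ of weight $i\geq 2$, lowering the weight of the element at $x$ down to $i-1$ produces a legal non-isomorphism $\underline{b}\to\underline{a}$ in $\scrC_d$ whose induced map is the quotient $\bfV/\bfV_{n-i}\twoheadrightarrow\bfV/\bfV_{n-i+1}$ on the $x$-slot tensored with identities elsewhere; its kernel forces the $x$-slot into $\bfV_{(n-i+1)}$. For $x$ of weight $d$, additionally use the deletion $\underline{a}\setminus\{x\}\hookrightarrow\underline{a}$, inducing $\xi_d$ on the $x$-slot; combined with the decrement, this forces the $x$-slot into $(\bfV_{n-d+1}\cap\ker\xi_d)/\bfV_{n-d}=W_d/\bfV_{n-d}$. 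The elementary identity that the intersection of ``coordinate-hyperplane'' subspaces $U_x'\otimes\bigotimes_{y\neq x}U_y$ inside $\bigotimes_x U_x$ equals $\bigotimes_x U_x'$ then yields containment in the claimed tensor product.

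The main technical point will be verifying the legality of the decrement and deletion maps in $\scrC_d$, specifically that each still surjects onto weights ${<}d$; this is a short counting check. The edge cases are harmless: weight-$1$ elements with $d\neq 1$ contribute a vacuous constraint since $\bfV_{(n)}=\bfV/\bfV_{n-1}$ already equals the full slot, while weight-$1$ elements with $d=1$ are covered by the deletion alone, as no decrement is required in that case.
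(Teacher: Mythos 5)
Your argument is correct, and it follows the same two-inclusion skeleton as the paper (cut $K_{d,\underline{a}}$ down using explicitly chosen $\scrC_d^\op$-morphisms, then show the candidate subspace is killed by every non-isomorphism), but the execution of both halves is genuinely different. For the upper bound, the paper uses ``all at once'' morphisms --- a single bijection moving every weight-$j$ element down, and a single map deleting all weight-$d$ elements --- and records their kernels slot-wise, whereas you decrement or delete one element at a time, so that each kernel really is a single-slot constraint, and then invoke the elementary identity $\bigcap_x\bigl(U'_x\otimes\bigotimes_{y\neq x}U_y\bigr)=\bigotimes_x U'_x$; this is arguably cleaner, since the kernel of a map like $q\otimes q$ is $\ker q\otimes U+U\otimes\ker q$ rather than $\ker q\otimes\ker q$, so the one-slot-at-a-time maps are exactly what makes the intersection come out to the claimed tensor product. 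For the reverse containment, the paper argues representation-theoretically: the candidate space is generated as an $H_d$-module by the explicit vector $v$, and for every non-isomorphism target $\underline{b}$ the representation $T_{d,\underline{b}}$ has no $L_d$-weight space of the weight of $v$ (by comparing sizes and the dominance order), so equivariance forces the candidate space into every kernel. You instead check directly, slot by slot, that any non-isomorphism either omits an element (of weight $\geq d$, so $\xi_d$ is applied and kills the factor, using $W_d\subset\ker\xi_d$ and $\bfV_{n-i+1}\subset\bfV_{n-d}$ for $i>d$) or strictly raises some weight (so the quotient kills $\bfV_{(n-i+1)}$). Your route is more elementary and avoids the weight bookkeeping; the paper's weight argument avoids the case analysis and matches the style of arguments used elsewhere in the paper. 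Your handling of the edge cases (weight-$1$ slots, and the $d=1$ case where only deletion is available) and of the legality of the decrement/deletion morphisms in $\scrC_d$ is accurate, so no gaps remain.
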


\begin{proof}
    First, let $j\in[n-1]$ and $\underline{b}=(a_1,\ldots,a_{j-1},0,a_j+a_{j+1},a_{j+2},\ldots,a_n)$. Consider the following morphism $\varphi_j:\underline{a}\to\underline{b}$ in $\scrC^\op_d$: it is a bijection as a map of sets, and it is the identity on elements of weight not equal to $j$. Then the kernel of the surjection $\varphi_j:T_{d,\underline{a}}\to T_{d,\underline{b}}$ is
    \[\left(\bigotimes_{i\neq j}(\bfV/\bfV_{n-i})^{\otimes a_i}\right)\otimes \bfV_{(n-j+1)}^{\otimes a_j}.\]
    Now let $\underline{c}=(a_1,\ldots,a_{d-1},0,a_{d+1},\ldots,a_n)$, and consider the morphism $\psi:\underline{a}\to\underline{c}$ in $\scrC^\op_d$ given by the identity on elements of weight not equal to $d$. Then the surjection $\psi:T_{d,\underline{a}}\to T_{d,\underline{c}}$ has kernel
    \[\left(\bigotimes_{i\neq d}(\bfV/\bfV_{n-i})^{\otimes a_i}\right)\otimes (W_d/\bfV_{n-d})^{\otimes a_{d}}.\]
    Thus,
    \[K_{d,\underline{a}}\subset \ker\psi\cap\bigcap_j\ker\varphi_j=\left(\bigotimes_{i\neq d}(\bfV_{n-i+1}/\bfV_{n-i})^{\otimes a_i}\right)\otimes (W_d/\bfV_{n-d})^{\otimes a_{d}}.\]

    The vector
    \[v=\left(\bigotimes_{i\neq d}e_{n-i+1,1}\otimes\cdots\otimes e_{n-i+1,a_i}\right)\otimes e_{n-d+1,2}\otimes\cdots\otimes e_{n-d+1,a_{d}+1}\]
    generates the right-hand side as an $H_d$-module, and this vector has weight
    \[(1^{a_n},\ldots,1^{a_{d+1}},0,1^{a_{d}},\ldots,1^{a_1})\]
    under the action of the Levi subgroup $L_d$. Thus, if $T_{d,\underline{b}}$ does not have a weight space of this weight, then the right-hand side is in the kernel of any map $T_{d,\underline{a}}\to T_{d,\underline{b}}$.
    
    We now show that for any $\underline{b}$ with $\varphi:\underline{a}\to\underline{b}$ a non-isomorphism in $\scrC^\op_d$, there is indeed no weight space of this weight in $T_{d,\underline{b}}$. If $\abs{\underline{b}}<\abs{\underline{a}}$, then this is clearly true. Otherwise, we must have that $\abs{\underline{a}}=\abs{\underline{b}}$ with $(a_n,\ldots,a_1)>(b_n,\ldots,b_1)$ under the dominance order, and so there is no element of this weight in this case as well.
\end{proof}

\begin{remark}\label{rmk:H_d kernel min maps}
    Every morphism in $\scrC^\op_d$ can be written as a composition of morphisms of the following two forms:
    \begin{enumerate}
        \item a bijection $\underline{a}\to\underline{b}$ where $\abs{\underline{a}}=\abs{\underline{b}}$ and $(a_n,\ldots,a_1)>(b_n,\ldots,b_1)$ is a cover relation;
        \item a morphism $\underline{a}\to\underline{b}$, where
        \[b_i=\begin{cases}
        a_i&\quad i\neq d,\\
        a_{d}-1&\quad i=d,
        \end{cases}\]
        and the morphism corresponds to the natural injection $\underline{b}\to\underline{a}$ in $\scrC_d$.
    \end{enumerate}
    Thus, $K_{d,\underline{a}}$ can also be given as the intersection of kernels of maps $T_{d,\underline{a}}\to T_{d,\underline{b}}$, where $\underline{a}\to\underline{b}$ is of one of the above two forms.
\end{remark}

\begin{proposition}
    All irreducible constituents of $T_{d,\underline{a}}$ are of the form $S_{d,\underline{\lambda}}$.
    In particular, the $S_{d,\underline{\lambda}}$'s constitute all irreducible objects in $\Rep(H_d)$.
\end{proposition}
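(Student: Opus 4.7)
The plan is to prove, by induction on $\underline{a}$ with respect to the lexicographic order on tuples $(|\underline{a}|, a_n, a_{n-1}, \ldots, a_1) \in \N^{n+1}$, that every irreducible constituent of $T_{d,\underline{a}}$ is of the form $S_{d,\underline{\lambda}}$; the base case $\underline{a} = \underline{0}$ is immediate since $T_{d,\underline{0}} = \C = S_{d,\underline{\emptyset}}$. Once this is established, the ``in particular'' assertion follows at once: every polynomial $H_d$-representation is, by definition, a subquotient of a direct sum of tensor powers of $\bfV$, so every simple polynomial $H_d$-representation appears as a constituent of $\bfV^{\otimes N} = T_{d,(0,\ldots,0,N)}$ for some $N$.

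For the inductive step, I work with the short exact sequence
\[0 \to K_{d,\underline{a}} \to T_{d,\underline{a}} \to T_{d,\underline{a}}/K_{d,\underline{a}} \to 0\]
and analyze the two ends separately. By Lemma~\ref{lem:H_d kernel}, $K_{d,\underline{a}}$ is a tensor product of standard representations of the $\GL$-factors of the Levi of $H_d$---namely $\bfV_{(n-i+1)}$ for $i \neq d$ and $W_d/\bfV_{n-d}$ for $i = d$---and on each such factor the $H_d$-action factors through the Levi (for $i \neq d$ because the unipotent part of $\P$ dies in the graded piece; for $i=d$ because the affine translation of the $(n{-}d{+}1)$-block moves $e_{n-d+1,1}$ but acts trivially on the submodule $W_d/\bfV_{n-d}$). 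Classical Schur--Weyl duality applied to each tensor factor then yields a decomposition
\[K_{d,\underline{a}} \;\cong\; \bigoplus_{\underline{\lambda},\,|\lambda^i|=a_i} \bfM_{\underline{\lambda}} \otimes S_{d,\underline{\lambda}}\]
as an $\frakS_{\underline{a}} \times H_d$-representation, so every irreducible constituent of $K_{d,\underline{a}}$ is already of the desired form.

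For the quotient, the definition of $K_{d,\underline{a}}$ provides an embedding $T_{d,\underline{a}}/K_{d,\underline{a}} \hookrightarrow \bigoplus_{\varphi} T_{d,\underline{b}}$ indexed by the finitely many (by inwards finiteness of $\scrC_d$) non-iso morphisms $\varphi: \underline{a} \to \underline{b}$ in $\scrC_d^\op$. By Remark~\ref{rmk:H_d kernel min maps}, I may restrict to the two elementary types: a weight-lowering bijection with $|\underline{b}| = |\underline{a}|$ and $\underline{a}$ strictly greater in the reverse dominance order, or a decrement giving $b_d = a_d - 1$ and $|\underline{b}| < |\underline{a}|$. In both cases $\underline{b}$ is strictly smaller in the induction lex order (using that a strict cover in the reverse dominance order on reversed tuples forces strict lex inequality), so by the inductive hypothesis every simple subquotient of the $T_{d,\underline{b}}$---and hence of the submodule $T_{d,\underline{a}}/K_{d,\underline{a}}$---is of the form $S_{d,\underline{\lambda}}$. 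The only subtle point is choosing the induction order so that both elementary types in $\scrC_d^\op$ strictly reduce complexity, which is why I bundle $|\underline{a}|$ (handled by type~2) with the reversed lex order on the entries (handled by type~1).
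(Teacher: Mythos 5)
Your proof is correct and takes essentially the same route as the paper: both rest on the exact sequence $0\to K_{d,\underline{a}}\to T_{d,\underline{a}}\to\bigoplus T_{d,\underline{b}}$ over the elementary morphisms of Remark~\ref{rmk:H_d kernel min maps}, use Lemma~\ref{lem:H_d kernel} (plus Schur--Weyl) to handle the kernel, and induct to handle the image. The only difference is bookkeeping: the paper inducts on the target objects $\underline{b}$ with base case $(a_1+\cdots+a_{d-1},0,\ldots,0)$, while you induct on $\underline{a}$ via an explicit lexicographic well-order, which works since strict dominance forces strict lex inequality.
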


\begin{proof}
We have an exact sequence
    \[0\to K_{d,\underline{a}}\to T_{d,\underline{a}}\to\bigoplus_{\underline{a}\to\underline{b}}T_{d,\underline{b}},\]
    where the direct sum ranges over all morphisms $\underline{a}\to\underline{b}$ described in Remark~\ref{rmk:H_d kernel min maps}. The simple constituents of $K_{d,\underline{a}}$ consist of the $S_{d,\underline{\lambda}}$'s with $\underline{a}=(\abs{\lambda^i})$ by Lemma~\ref{lem:H_d kernel}. We induct on the objects $\underline{b}$, with base case given by $\underline{b}=(b,0,\ldots,0)$ for $b=a_1+\cdots+a_{d-1}$. Then $T_{d,\underline{b}}=\bfV_{(n)}^{\otimes b}$, which has simple constituents of the form $\bfS_{\lambda}(\bfV_{(n)})$. Therefore, all simple constituents of $T_{d,\underline{a}}$ have the desired form.
\end{proof}

\begin{lemma}\label{lem:socle H_d}
    The socle of $T_{d,\underline{a}}$ is $K_{d,\underline{a}}$.
\end{lemma}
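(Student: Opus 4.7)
The plan is to prove $\operatorname{soc}(T_{d,\underline{a}}) = K_{d,\underline{a}}$ via both inclusions, reducing the nontrivial direction to the identity $T_{d,\underline{a}}^{U_d} = K_{d,\underline{a}}$, where $U_d$ denotes the unipotent radical of $H_d$.

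For the inclusion $K_{d,\underline{a}} \subseteq \operatorname{soc}(T_{d,\underline{a}})$, I would show that $K_{d,\underline{a}}$ is semisimple. The group $U_d$ consists of the unipotent radical $U_\P$ of $\P$ together with the affine translations $e_{n-d+1,1} \mapsto e_{n-d+1,1} + w$ for $w \in W_d/\bfV_{n-d}$, and it acts trivially on each tensor factor appearing in Lemma~\ref{lem:H_d kernel}: the graded piece $\bfV_{(n-i+1)}$ for $i \neq d$ is fixed by $U_\P$ and does not involve $e_{n-d+1,1}$, while $W_d/\bfV_{n-d}$ is fixed by the translations by construction. Hence $K_{d,\underline{a}}$ is a polynomial representation of the reductive Levi $L_d = H_d/U_d$, and is therefore semisimple.

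For the reverse inclusion I would first establish the identity $\operatorname{soc}(V) = V^{U_d}$ for every polynomial $H_d$-representation $V$. By the classification of simples in the previous proposition, every simple $H_d$-representation is of the form $S_{d,\underline{\mu}}$, on which $U_d$ acts trivially; hence $\operatorname{soc}(V) \subseteq V^{U_d}$. Conversely, $V^{U_d}$ is a polynomial $L_d$-representation (the $H_d$-action factors through $L_d$), so it is semisimple as an $H_d$-module and thus contained in $\operatorname{soc}(V)$. The problem reduces to computing $T_{d,\underline{a}}^{U_d} = K_{d,\underline{a}}$. I would carry this out in two stages: first show $T_{d,\underline{a}}^{U_\P} = \bigotimes_i \bfV_{(n-i+1)}^{\otimes a_i}$, picking out the bottom graded piece of each tensor factor; then, within this subspace, restricting to translation-invariance replaces each of the $a_d$ factors $\bfV_{(n-d+1)}$ by $W_d/\bfV_{n-d}$, yielding exactly $K_{d,\underline{a}}$.

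Each stage is verified by expanding a $U$-invariant $v = \sum c_J e_J$ in a monomial basis and applying the continuous family of unipotent elements to extract polynomial identities in their parameters. Because $U_\P$ and the translation group are each parameterized by an infinite-dimensional affine space (matrix entries $u_{(i'j'),(ij)}$ for $U_\P$; vectors $w \in W_d/\bfV_{n-d}$ for the translations), one can substitute specific parameter values, for instance a basis vector $e_{n-d+1,m}$ of $W_d$ with $m$ strictly beyond the finite support of $v$, to isolate and force vanishing of any individual coefficient $c_J$ whose basis vector falls outside the claimed invariant subspace. The main obstacle is the multilinear bookkeeping across tensor positions needed to confirm that the invariance conditions really do decouple into coefficient-wise identities; the key mechanism is precisely the infinite-dimensionality of the parameter spaces, which provides enough test elements to separate the equations and rule out any exotic invariants (of the kind that do arise in finite-dimensional tensor-product situations).
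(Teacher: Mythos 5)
Your strategy is the same as the paper's: both arguments rest on the facts that every simple $H_d$-representation is some $S_{d,\underline{\mu}}$ (so the unipotent radical $U_d$ acts trivially on any semisimple object), that $K_{d,\underline{a}}$ is semisimple by Lemma~\ref{lem:H_d kernel}, and that nothing strictly larger than $K_{d,\underline{a}}$ inside $T_{d,\underline{a}}$ is $U_d$-trivial. Your reduction $\mathrm{soc}(V)=V^{U_d}$ and the explicit computation of $T_{d,\underline{a}}^{U_d}$ supply detail that the paper's proof leaves implicit, and the mechanism you invoke (infinite-dimensionality of the graded pieces ruling out determinant-type exotic invariants) is indeed the correct one.

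There is, however, a concrete error in your description of $U_d$ that affects the logic of the key step. For $d\geq 2$ the unipotent radical $U_\P$ of $\P$ is \emph{not} contained in $H_d$: an element sending $e_{i,j}\mapsto e_{i,j}+e_{n-d+1,1}$ for some $i>n-d+1$ lies in $U_\P$ but does not fix $\xi_d$. The correct group is $U_d=(U_\P\cap H_d)\rtimes(\text{translations } e_{n-d+1,1}\mapsto e_{n-d+1,1}+w)$, where $U_\P\cap H_d$ consists of those block-unipotent elements whose $(n-d+1,j)$-blocks, $j>n-d+1$, have vanishing top row. This matters for the direction of the containment you need: to prove $\mathrm{soc}(T_{d,\underline{a}})=T_{d,\underline{a}}^{U_d}\subseteq K_{d,\underline{a}}$ you must bound invariants under a \emph{subgroup} of $U_d$, whereas your two-stage computation of $\bigl(T_{d,\underline{a}}^{U_\P}\bigr)^{\text{transl}}$ takes invariants under a group strictly containing $U_d$ (and not contained in $H_d$), which only re-proves the containment $K_{d,\underline{a}}\subseteq T_{d,\underline{a}}^{U_d}$ that you already have. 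The repair is straightforward: run the same computation with $U_\P\cap H_d$ in place of $U_\P$. The only parameters you lose are those unipotent entries whose target is the line $\C e_{n-d+1,1}$, and your separating test elements can always be chosen with targets in $W_d/\bfV_{n-d}$ or in lower graded pieces, all of which are still infinite-dimensional, so the ``enough test elements'' argument goes through unchanged and yields $T_{d,\underline{a}}^{U_d}=K_{d,\underline{a}}$. With that correction your proof is complete, and it in fact substantiates the assertion the paper makes without proof, namely that any submodule properly containing $K_{d,\underline{a}}$ carries a nontrivial $U_d$-action.
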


\begin{proof}
    Let $U_d\subset H_d$ denote the unipotent radical. Since every simple $H_d$-representation is of the form $S_{d,\underline{\lambda}}$, the action of $U_d$ on any semisimple $H_d$-representation is trivial. 

    We have that $K_{d,\underline{a}}$ is semisimple by Lemma~\ref{lem:H_d kernel}. If $V\subset T_{d,\underline{a}}$ is a submodule that properly contains $K_{d,\underline{a}}$, then $V$ must have a nontrivial action of $U_d$, so $V$ cannot be semisimple.
\end{proof}

\subsection{Indecomposable injectives}

We now show that $\Rep(H_d)$ and $\Mod_{\scrC_d}^{\lf}$ are equivalent, and use this to identify the indecomposable injectives of $\Rep(H_d)$. The functor giving the equivalence is described by the structured tensor product $\odot=\odot_{\scrC_d}$, which is defined in \cite[\S2.1.9]{SSstabpatterns}.

\begin{proposition}\label{prop:repH_d equiv Cd-mod}
    We have an equivalence of categories $\Rep(H_d)\cong\Mod^\lf_{\scrC_d}$.
\end{proposition}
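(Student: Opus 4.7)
The plan is to construct the equivalence via the structured tensor product $\odot_{\scrC_d}$, defining $\Phi: \Mod_{\scrC_d} \to \Rep(H_d)$ as the coend
\[\Phi(M) = \int^{\underline{a}} M(\underline{a}) \otimes T_{d,\underline{a}}.\]
This functor is cocontinuous and right-exact by construction, and by the co-Yoneda formula it sends the principal projective $\C[\scrC_d(\underline{a}, -)]$ to $T_{d, \underline{a}}$. The essentially surjective, fully faithful equivalence will then be obtained by restricting $\Phi$ to $\Mod^\lf_{\scrC_d}$.

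The key technical input is a Schur--Weyl type Hom comparison:
\[\Hom_{H_d}(T_{d, \underline{a}}, T_{d, \underline{b}}) \cong \C[\scrC_d(\underline{b}, \underline{a})]\]
as $\frakS_{\underline{a}} \times \frakS_{\underline{b}}$-representations. To prove this, I would analyze $H_d$-equivariant maps between tensor products of the quotients $\bfV/\bfV_{n-i}$ using the endomorphism/specialization technique from \S\ref{subsec:prelim Rep(P)}(5): every such map is built from elementary pieces that either match tensor factors via the natural quotients $\bfV/\bfV_{n-i} \twoheadrightarrow \bfV/\bfV_{n-j}$ (for $j \leq i$) or evaluate $\xi_d$ on factors of weight $\geq d$. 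These correspond precisely to the two elementary morphisms in $\scrC_d^\op$ listed in Remark~\ref{rmk:H_d kernel min maps}, and compositions generate the remaining morphisms.

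Once the Hom comparison is in hand, a standard Yoneda-style argument shows that $\Phi$ extends fully faithfully to all of $\Mod_{\scrC_d}$ and is exact (it sends projective resolutions of a finite length module to a resolution in $\Rep(H_d)$ by $T_{d,\underline{a}}$'s). For essential surjectivity onto $\Rep(H_d)$, note that the image of $\Phi$ is closed under subquotients and colimits and contains each $T_{d, \underline{a}}$; since $\bfV = T_{d,(0,\ldots,0,1)}$, every tensor power $\bfV^{\otimes a}$ lies in the image, hence so does every polynomial $H_d$-representation. Restricting $\Phi$ to $\Mod^\lf_{\scrC_d}$ then gives the stated equivalence, using that $\Rep(H_d)$ is itself locally finite length (as the finite socle computation in Lemma~\ref{lem:socle H_d} combined with the inwards finiteness of $\scrC_d$ ensures each element generates a finite length subrepresentation).

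The main obstacle will be the Hom comparison. The complication is that $H_d$ has a nontrivial unipotent radical, so $H_d$-invariants of tensor products can be strictly larger than Levi-invariants: there is the extra $\xi_d$-evaluation mechanism on top of the classical Schur--Weyl matching of tensor factors. Showing that these two mechanisms exhaust all $H_d$-equivariant maps requires a careful weight-by-weight analysis, and this is precisely where the defining condition of $\scrC_d$-morphisms (being surjective on weights $1, \ldots, d-1$) appears, since $\xi_d$ can only collapse tensor factors of weight $\geq d$.
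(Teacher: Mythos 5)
There is a genuine gap, and it sits in the categorical skeleton of your argument rather than in the combinatorics. The functor you start from, the coend $\Phi(M)=\int^{\underline{a}}M(\underline{a})\otimes T_{d,\underline{a}}$, is only right exact and is not an equivalence on $\Mod^{\lf}_{\scrC_d}$: already for $n=d=1$ (so $\scrC_d=\FI$ and $H_d$ is the stabilizer of $\xi_1$ in $\GL$), the simple module $\bfM$ concentrated on the empty set has presentation $\C[\Hom_{\FI}([1],-)]\to\C[\Hom_{\FI}([0],-)]\to\bfM\to 0$ by principal projectives, and by co-Yoneda together with the definition of $\mathscr{T}_d$ on morphisms the first arrow is sent to the surjection $\xi_1\colon\bfV\to\C$; right exactness then forces $\Phi(\bfM)=0$. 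So $\Phi$ kills a nonzero finite length module, hence is neither exact nor faithful on locally finite modules, and your parenthetical claims that it ``is exact'' and ``extends fully faithfully to all of $\Mod_{\scrC_d}$'' both fail (the latter is impossible for length reasons in any case: the principal projectives of $\Mod_{\scrC_d}$ have infinite length while their images $T_{d,\underline{a}}$ do not). The functor realizing the equivalence must send the simple $\bfM_{\underline{\lambda}}$ to the \emph{sub}representation $\Hom_{\frakS_{\underline{a}}}(\bfM_{\underline{\lambda}},K_{d,\underline{a}})=S_{d,\underline{\lambda}}$ of $T_{d,\underline{a}}$, not to a quotient; this is what the structured tensor product of Sam--Snowden amounts to on locally finite modules (equivalently, one can work contravariantly with $\Hom_{\scrC_d^{\op}}(-,\mathscr{T}_d)$, as in the proof of Corollary~\ref{cor:inj H_d}).

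Even granting your Hom comparison $\Hom_{H_d}(T_{d,\underline{a}},T_{d,\underline{b}})\cong\C[\scrC_d(\underline{b},\underline{a})]$ --- which is plausible and encodes essentially the same combinatorics as Lemma~\ref{lem:H_d kernel} and Remark~\ref{rmk:H_d kernel min maps} --- the bootstrapping step is not ``standard Yoneda''. To pass from the principal objects to finite length modules one needs, in effect, a natural isomorphism $\Hom_{H_d}(T_{d,\underline{a}},\Phi(N))\cong N(\underline{a})$ for all locally finite $N$, and this is not formal because the $T_{d,\underline{a}}$ are injective rather than projective in $\Rep(H_d)$ (their injectivity is itself only obtained after the proposition, in Corollary~\ref{cor:inj H_d}); likewise ``the image of $\Phi$ is closed under subquotients'' is not automatic for a functor not yet known to be exact and fully faithful. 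This non-formal step is exactly what the paper outsources to the criterion of \cite{SSstabpatterns}, whose two hypotheses are supplied by the kernel computation (Lemma~\ref{lem:H_d kernel}, via Schur--Weyl duality) and the socle computation (Lemma~\ref{lem:socle H_d}); neither of these appears in your plan. Your weight-by-weight analysis of elementary $H_d$-equivariant maps is the right combinatorial input, but it needs to feed into that criterion (or into a contravariant $\Hom(-,\mathscr{T}_d)$ argument with the kernel and socle statements proved), not into the naive coend construction as written.
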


\begin{proof}
We have a functor $\Mod_{\scrC_d}^\lf\to\Rep(H_d)$ given by the structured tensor product $M\mapsto M\odot\mathscr{T}_d$. This functor is cocontinuous, and both categories are locally noetherian and artinian. It therefore suffices to show an equivalence among the subcategories of finite length objects. By \cite[Corollary 2.1.12]{SSstabpatterns}, this functor is an equivalence if the following conditions hold:
\begin{enumerate}
    \item For any simple $\frakS_{\underline{a}}$-module $\bfM_{\underline{\lambda}}$ with $\underline{a}=(\abs{\lambda^i})$, the polynomial $H_d$-representation $\Hom_{\frakS_{\underline{a}}}(\bfM_{\underline{\lambda}},K_{d,\underline{a}})$ is simple.
    \item For every simple $H_d$-representation $S_{d,\underline{\lambda}}$, there is a unique tuple $\underline{a}$ such that the space $\Hom_{H_d}(S_{d,\underline{\lambda}},T_{d,\underline{a}})$ is nonzero, and it is an irreducible representation of $\frakS_{\underline{a}}$.
\end{enumerate}
By Lemma~\ref{lem:H_d kernel} and Schur--Weyl duality, $\Hom_{\frakS_{\underline{a}}}(\bfM_{\underline{\lambda}},K_{d,\underline{a}})=S_{d,\underline{\lambda}}$, 
so (1) holds. Condition (2) holds by Lemma~\ref{lem:socle H_d}, as $S_{d,\underline{\lambda}}$ is a submodule of $T_{d,\underline{a}}$ if and only if $\underline{a}=(\abs{\lambda^i})$, and the Hom space in this case is the simple $\frakS_{\underline{a}}$-representation $\bfM_{\underline{\lambda}}$.
\end{proof}

For a tuple of partitions $\underline{\lambda}$, define the $H_d$-representation $T_{d,\underline{\lambda}}$ to be
\[T_{d,\underline{\lambda}}=\bigotimes_{i=1}^n\bfS_{\lambda^i}(\bfV/\bfV_{n-i}).\]

\begin{corollary}\label{cor:inj H_d}
The injective envelope of $S_{d,\underline{\lambda}}$ is $T_{d,\underline{\lambda}}$, and the $T_{d,\underline{\lambda}}$'s constitute all indecomposable injective objects of $\Rep(H_d)$.
\end{corollary}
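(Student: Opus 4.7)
The plan is to invoke the equivalence $\Rep(H_d) \cong \Mod^\lf_{\scrC_d}$ from Proposition~\ref{prop:repH_d equiv Cd-mod}, under which $S_{d,\underline{\lambda}}$ corresponds to the simple $\scrC_d$-module $\bfM_{\underline{\lambda}}$ supported at $\underline{a} = (|\lambda^i|)$. Since $\scrC_d$ is inwards finite, each $\bfM_{\underline{\lambda}}$ admits a finite length injective envelope in $\Mod^\lf_{\scrC_d}$, so the injective envelope $E_{\underline{\lambda}}$ of $S_{d,\underline{\lambda}}$ exists in $\Rep(H_d)$. The problem reduces to identifying $E_{\underline{\lambda}}$ with $T_{d,\underline{\lambda}}$; completeness then follows immediately, because any indecomposable injective in $\Rep(H_d)$ is the envelope of a unique simple, and the simples are exhausted by $\{S_{d,\underline{\lambda}}\}$.

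As a preparatory step, I would apply classical Schur--Weyl duality to each tensor factor $(\bfV/\bfV_{n-i})^{\otimes a_i}$ to produce the $(\frakS_{\underline{a}}, H_d)$-bimodule decomposition
\[
T_{d,\underline{a}} = \bigoplus_{\underline{\lambda}:\,(|\lambda^i|)=\underline{a}} \bfM_{\underline{\lambda}} \otimes T_{d,\underline{\lambda}}.
\]
By Lemma~\ref{lem:socle H_d}, $\mathrm{soc}(T_{d,\underline{a}}) = K_{d,\underline{a}}$, and by Lemma~\ref{lem:H_d kernel} together with Schur--Weyl applied to the quotient factors, the $\underline{\lambda}$-isotypic component of $K_{d,\underline{a}}$ is $\bfM_{\underline{\lambda}} \otimes S_{d,\underline{\lambda}}$ (cf.\ condition~(1) in the proof of Proposition~\ref{prop:repH_d equiv Cd-mod}). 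Matching $\underline{\lambda}$-components forces $\mathrm{soc}(T_{d,\underline{\lambda}}) = S_{d,\underline{\lambda}}$, so each $T_{d,\underline{\lambda}}$ has simple socle and will be the injective envelope of $S_{d,\underline{\lambda}}$ as soon as injectivity is established.

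The main obstacle is therefore proving that $T_{d,\underline{a}}$ is injective in $\Rep(H_d)$; injectivity then passes to each direct summand $T_{d,\underline{\lambda}}$. My preferred approach is to transport the question across the equivalence: letting $F: \Rep(H_d) \to \Mod^\lf_{\scrC_d}$ denote the inverse equivalence $V \mapsto (\underline{b} \mapsto \Hom_{H_d}(T_{d,\underline{b}}, V))$, injectivity of $T_{d,\underline{a}}$ is equivalent to injectivity of the $\scrC_d$-module $F(T_{d,\underline{a}})(\underline{b}) = \Hom_{H_d}(T_{d,\underline{b}}, T_{d,\underline{a}})$. I would identify this functor with a standard injective object of $\Mod^\lf_{\scrC_d}$ attached to $\underline{a}$, using the tautological linearization from $\C[\Hom_{\scrC_d}(\underline{b}, \underline{a})]$ into $\Hom_{H_d}(T_{d,\underline{b}}, T_{d,\underline{a}})$ furnished by $\mathscr{T}_d$ applied to morphisms in $\scrC_d^\op$, and then verifying bijectivity by decomposing both sides into $L_d$-weight spaces as in \S\ref{subsec:prelim Rep(P)} and using Lemma~\ref{lem:H_d kernel} to match weight components. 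Should the explicit identification prove awkward, a backup is to directly verify $\Ext^1_{H_d}(S_{d,\underline{\mu}}, T_{d,\underline{a}}) = 0$ for every simple $S_{d,\underline{\mu}}$ by the same weight-theoretic analysis, which suffices in a category with finite length simples and enough injectives.

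Once injectivity is in hand, the combination ``simple socle $S_{d,\underline{\lambda}}$ + injective'' identifies each $T_{d,\underline{\lambda}}$ with $E_{\underline{\lambda}}$, completing the first assertion, and the second follows as noted in the first paragraph.
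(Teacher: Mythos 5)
Your overall skeleton (simple socle, plus injectivity, plus exhaustion of the simples) is sound, and the socle step works: the Schur--Weyl decomposition $T_{d,\underline{a}}=\bigoplus_{\underline{\lambda}}\bfM_{\underline{\lambda}}\otimes T_{d,\underline{\lambda}}$ combined with Lemmas~\ref{lem:H_d kernel} and~\ref{lem:socle H_d} does give $\mathrm{soc}(T_{d,\underline{\lambda}})=S_{d,\underline{\lambda}}$. The gap is in the injectivity step, where you transport the problem along ``the inverse equivalence'' $F\colon V\mapsto(\underline{b}\mapsto\Hom_{H_d}(T_{d,\underline{b}},V))$. That functor is not a quasi-inverse of $M\mapsto M\odot\mathscr{T}_d$; its variance is backwards. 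Already for $n=d=1$ (so $\scrC_1=\FI$ and $H_1$ is the stabilizer of $\xi_1$ in $\GL$), take $V=\C=T_{1,\underline{0}}$, a simple object: $\Hom_{H_1}(\bfV^{\otimes b},\C)$ contains $\xi_1^{\otimes b}$ for every $b$, so $F(\C)$ is nonzero in all degrees, hence not simple and not even an object of $\Mod^\lf_{\scrC_d}$, whereas an equivalence must carry simples to simples. For the same reason your ``tautological linearization'' points the wrong way: $\mathscr{T}_d$ applied to a $\scrC_d$-morphism $\underline{b}\to\underline{a}$ produces an $H_d$-map $T_{d,\underline{a}}\to T_{d,\underline{b}}$, so it linearizes $\C[\Hom_{\scrC_d}(\underline{b},\underline{a})]$ into $\Hom_{H_d}(T_{d,\underline{a}},T_{d,\underline{b}})$, not into $\Hom_{H_d}(T_{d,\underline{b}},T_{d,\underline{a}})$; the latter space is nonzero for arbitrarily large $\underline{b}$, so it cannot be the finite length standard injective attached to $\underline{a}$. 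Thus the central identification your argument rests on is false as stated.

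The repair is essentially the paper's proof: use the \emph{contravariant} equivalence $\Phi\colon\Mod^\lf_{\scrC^\op_d}\to\Rep(H_d)$, $M\mapsto\Hom_{\scrC^\op_d}(M,\mathscr{T}_d)$, furnished by \cite[Theorem 2.1.11]{SSstabpatterns} and Proposition~\ref{prop:repH_d equiv Cd-mod}. By Yoneda, $\Phi$ sends the principal projective $\scrC^\op_d$-module at $\underline{a}$ to $T_{d,\underline{a}}$, so injectivity of the $T_{d,\underline{a}}$ is immediate (a contravariant equivalence exchanges projectives and injectives), with no weight-space or $\Ext$ computation needed; moreover $\Phi$ carries the projective cover $i_\#(\bfM_{\underline{\lambda}})$ of the simple $\scrC^\op_d$-module to the injective envelope of $S_{d,\underline{\lambda}}$, and the adjunction $(i_\#,i^*)$ identifies this image with $\Hom_{\frakS_{\underline{a}}}(\bfM_{\underline{\lambda}},T_{d,\underline{a}})=T_{d,\underline{\lambda}}$. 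Your fallback of verifying $\Ext^1_{H_d}(S_{d,\underline{\mu}},T_{d,\underline{a}})=0$ for all simples would indeed suffice in a locally finite length category, but as written it is only a plan, and the natural way to carry out those vanishings is again through the $\scrC_d$-module description, at which point the argument above is shorter.
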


\begin{proof}
Let $B\frakS_{\underline{a}}$ denote the category with a single object that has automorphism group $\frakS_{\underline{a}}$. There is a natural fully faithful functor $i:B\frakS_{\underline{a}}\to\scrC^\op_d$, which induces the pullback functor $i^*:\Mod_{\scrC^\op_d}\to\Rep(\frakS_{\underline{a}})$ and its left Kan extension $i_\#:\Rep(\frakS_{\underline{a}})\to\Mod_{\scrC^\op_d}$, which is left adjoint to $i^*$. We have that $\bfM_{\underline{\lambda}}$ is an indecomposable projective object in $\Rep(\frakS_{\underline{a}})$. Then by \cite[\S2.1.5]{SSstabpatterns}, the object $i_\#(\bfM_{\underline{\lambda}})$ is the projective cover of the simple $\scrC^\op_d$-module $\bfM_{\underline{\lambda}}$, and every indecomposable projective object in $\Mod_{\scrC_d^\op}^\lf$ is of this form.

Recall the $\scrC^\op_d$-module $\mathscr{T}_d$ defined by $\underline{a}\mapsto T_{d,\underline{a}}$. This defines a contravariant functor $\Phi:\Mod_{\scrC^\op_d}^\lf\to\Rep(H_d)$ given by $M\mapsto \Hom_{\scrC^\op_d}(M,\mathscr{T}_d)$, and by \cite[Theorem 2.1.11]{SSstabpatterns} and Proposition~\ref{prop:repH_d equiv Cd-mod}, this is an equivalence of categories. Therefore, $\Phi(i_\#(\bfM_{\underline{\lambda}}))$ is the injective envelope of the simple $H_d$-representation $\Phi(\bfM_{\underline{\lambda}})=S_{d,\underline{\lambda}}$. Using the adjunction, we see that
    \begin{align*}
\Phi(i_\#(\bfM_{\underline{\lambda}}))&=\Hom_{\scrC^\op_d}(i_\#(\bfM_{\underline{\lambda}}),\mathscr{T}_d)=\Hom_{\frakS_{\underline{a}}}(\bfM_{\underline{\lambda}},i^*(\mathscr{T}_d))=\Hom_{\frakS_{\underline{a}}}(\bfM_{\underline{\lambda}},T_{d,\underline{a}})=T_{d,\underline{\lambda}}.
    \end{align*}
The result then follows.
\end{proof}

\begin{remark}
    In \cite{Yu}, we showed that all indecomposable injectives in the category $\Rep(\P)$ of finite length polynomial $\P$-representations are also of the form $\bigotimes_{i=1}^n\bfS_{\lambda^i}(\bfV/\bfV_{n-i})$, for a tuple $\underline{\lambda}$ of partitions. Thus, all injective $H_d$-representations are restrictions of injective $\P$-representations. 
\end{remark}

The following key finiteness result follows from the properties of $\Mod_{\scrC_d}$.

\begin{corollary}\label{cor:repH_d finite/inj}
    Let $V$ be a finitely generated object of $\Rep(H_d)$. Then $V$ has finite length and finite injective dimension.
\end{corollary}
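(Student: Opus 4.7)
The strategy is to transport the problem via Proposition~\ref{prop:repH_d equiv Cd-mod} to the category $\Mod^\lf_{\scrC_d}$, where both finiteness assertions follow from the inwards finiteness of $\scrC_d$ recorded earlier.

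First I would unpack what finite generation means: a finitely generated polynomial $H_d$-representation $V$ is a quotient of $\bfV^{\otimes m}$ for some $m\geq 0$, and for the tuple $\underline{a}=(0,\ldots,0,m)$ one has $\bfV^{\otimes m}=T_{d,\underline{a}}$, so it suffices to show that each $T_{d,\underline{a}}$ has finite length in $\Rep(H_d)$. Applying the classical Schur--Weyl decomposition tensor-factor by tensor-factor yields
\[
T_{d,\underline{a}}=\bigotimes_{i=1}^n(\bfV/\bfV_{n-i})^{\otimes a_i}=\bigoplus_{\underline{\lambda}\,:\,\abs{\lambda^i}=a_i\text{ for all }i}\bfM_{\underline{\lambda}}\otimes T_{d,\underline{\lambda}},
\]
realizing $T_{d,\underline{a}}$ as a finite direct sum of the indecomposable injectives $T_{d,\underline{\lambda}}$ identified in Corollary~\ref{cor:inj H_d}. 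Under the equivalence of Proposition~\ref{prop:repH_d equiv Cd-mod}, each $T_{d,\underline{\lambda}}$ corresponds to the injective envelope in $\Mod^\lf_{\scrC_d}$ of the simple $\scrC_d$-module $\bfM_{\underline{\lambda}}$, which has finite length by the inwards finiteness of $\scrC_d$. Transporting back, $T_{d,\underline{a}}$ has finite length in $\Rep(H_d)$, and hence so does its quotient $V$.

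For the injective dimension, I would invoke once more the inwards finiteness of $\scrC_d$: every finite length object in $\Mod^\lf_{\scrC_d}$ has finite injective dimension, and this transports across the equivalence to give the analogous statement for $V$ in $\Rep(H_d)$. The only substantive ingredient is the Schur--Weyl splitting of $T_{d,\underline{a}}$, which is formal because Schur functors are functorial and the relevant symmetric group action comes purely from permutation of tensor factors; once this splitting is in hand, the combinatorial properties of $\scrC_d$ recorded earlier absorb all of the remaining work.
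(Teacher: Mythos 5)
Your core computation is correct and in the spirit of what the paper intends (the paper's own justification is just "follows from the properties of $\Mod_{\scrC_d}$"): the factorwise Schur--Weyl splitting $T_{d,\underline{a}}\cong\bigoplus_{\underline{\lambda}}\bfM_{\underline{\lambda}}\otimes T_{d,\underline{\lambda}}$ is valid, since the decomposition $W^{\otimes a}\cong\bigoplus_{\lambda}M_\lambda\otimes\bfS_\lambda(W)$ is natural in $W$ and hence $H_d$-equivariant; Corollary~\ref{cor:inj H_d} identifies each summand as the injective envelope of a simple; and inwards finiteness of $\scrC_d$, transported through Proposition~\ref{prop:repH_d equiv Cd-mod}, gives finite length of these envelopes and finite injective dimension of finite length objects.

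The gap is your opening "unpacking": it is not part of the definition, and not automatic, that a finitely generated object of $\Rep(H_d)$ is a quotient of $\bfV^{\otimes m}$ (or even of a finite direct sum of tensor powers). Polynomial $H_d$-representations are only required to be \emph{subquotients} of possibly infinite sums of tensor powers, and the objects $T_{d,\underline{a}}$ are injective, not projective, in $\Rep(H_d)$, so there is no a priori surjection from them onto a given finitely generated $V$; that claim would itself need an argument, which you do not give. The repair is cheap and keeps your main computation: write $V$ as a subquotient $X/Y$ with $X$ inside a direct sum of tensor powers; each of finitely many generators lifts to an element of $X$ with only finitely many nonzero coordinates, hence lies in a subobject of a finite direct sum of the $T_{d,\underline{a}}$, which you have shown to have finite length; so each generator generates a finite length subrepresentation, and $V$ is a finite sum of such, hence of finite length, after which your injective-dimension argument goes through. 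Alternatively, and this is presumably the paper's intended one-line proof, the equivalence $\Rep(H_d)\cong\Mod^\lf_{\scrC_d}$ says every object is the union of its finite length subobjects, so a finitely generated object is automatically of finite length; with that observation the Schur--Weyl splitting is not needed at all.
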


\section{Generic categories}\label{sec:gen}

\subsection{Statement of the main theorem} 
For $d=1,\ldots,n$, let $A_d=\bfA/\frakp_{d}$; note that $A_n=\bfA$. Let $\Mod_d^\tors$ be the category of torsion $A_d$-modules, and let $\Mod_d^\gen$ denote the Serre quotient of $\Mod_{A_d}$ by $\Mod_d^\tors$. We call $\Mod_d^\gen$ the \emph{generic category} of $A_d$-modules. Our goal for this section is to understand the structure of $\Mod_d^\gen$ via the category $\Rep(H_d)$ studied in the previous section. In particular, we show that $\Mod_d^\gen\cong\Rep(H_d)$ as tensor categories.

The intuition for the equivalence is as follows. Consider the scheme $\Spec(A_d)$. Then, $A_d$-modules correspond to $\P$-equivariant quasi-coherent sheaves on $\Spec(A_d)$, torsion modules correspond to such sheaves that restrict to zero on a dense open subset $U$, and generic $A_d$-modules correspond to $\P$-equivariant quasi-coherent sheaves on $U$. Recall the linear functional $\xi_d$ defined in \S\ref{sec:rep H_d}; its $\P$-orbit is a dense open subset of $\Spec(A_d)$, and its stabilizer is $H_d\subset\P$. Thus, $\P$-equivariant quasi-coherent sheaves on $U$ should correspond to representations of $H_d$. 

This intuition comes from standard theory on representations of algebraic groups. However, since we are working with infinite-dimensional spaces, the details in our setting are a bit more technical. To prove the equivalence of categories, we define the functor $\Phi_d:\Mod_{A_d}\to\Rep(H_d)$ as follows. 
The form $\xi_d$ induces a ring homomorphism $\abs{A_d}\to\C$, which has kernel $\frakm_d\subset \abs{A_d}$, a maximal (not $\P$-equivariant) ideal. For an $A_d$-module $M$, define $\Phi_d(M)=M/\frakm_d M$. This is indeed a polynomial $H_d$-representation: $\frakm_d$ is $H_d$-equivariant, and since $M$ is a quotient of $A_d\otimes V$ for some polynomial $\P$-representation $V$, then $\Phi(M)$ is a quotient of the polynomial $H_d$-representation $\Phi_d(A_d\otimes V)=V$. Furthermore, $\Phi_d$ is a tensor functor.

The main result of this section is the following.

\begin{theorem}\label{thm:gen mods equiv Hd reps}
    The functor $\Phi_d$ induces an equivalence of tensor categories $\Mod_d^\gen\cong\Rep(H_d)$.
\end{theorem}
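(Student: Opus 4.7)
The plan is to proceed in three steps: show $\Phi_d$ annihilates the torsion subcategory (so descends to $\Mod_d^\gen$), identify the indecomposable injectives on both sides to handle essential surjectivity, and then prove full faithfulness via a direct Hom-calculation on a generating family. Because $\Phi_d$ is already symmetric monoidal by construction, the resulting equivalence will automatically be of tensor categories.

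For the first step, given a finitely generated torsion $A_d$-module $M$, Proposition~\ref{prop:torsion} yields a nonzero $\P$-ideal $\fraka\subset A_d$ with $\fraka M=0$. Theorem~\ref{thm:A ideals} classifies the $\P$-stable closed subschemes of the integral scheme $\Spec\abs{A_d}$ as the vanishing loci of the $\frakp_i$'s, and $\xi_d$ avoids every proper such locus, so its $\P$-orbit is Zariski dense and $\bigcap_{g\in\P}g\cdot\frakm_d=0$. By $\P$-stability, $\fraka\subset\frakm_d$ would force $\fraka=g\fraka\subset g\cdot\frakm_d$ for all $g$, hence $\fraka\subset 0$, a contradiction. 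Therefore $\fraka+\frakm_d=A_d$ and $\Phi_d(M)=M/\frakm_d M=0$. Filtered colimits extend this to all torsion modules.

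For the second step, fix a tuple $\underline{\lambda}$ and let $T_{\underline{\lambda}}=\bigotimes_i\bfS_{\lambda^i}(\bfV/\bfV_{n-i})$ denote the corresponding indecomposable injective $\P$-representation from \cite{Yu}. Because $\Phi_d$ is right-exact symmetric monoidal with $\Phi_d(A_d)=\C$, we obtain $\Phi_d(A_d\otimes T_{\underline{\lambda}})=T_{\underline{\lambda}}$ as an $H_d$-representation (via restriction), and the remark following Corollary~\ref{cor:inj H_d} identifies this restriction with the indecomposable injective $T_{d,\underline{\lambda}}$ of $\Rep(H_d)$. Since every object of $\Rep(H_d)$ embeds into a product of $T_{d,\underline{\lambda}}$'s (Corollary~\ref{cor:inj H_d}), once full faithfulness is in hand, $\Phi_d$ restricts to an equivalence onto its essential image, which is therefore closed under subobjects; essential surjectivity follows.

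The main obstacle will be full faithfulness. The plan is to establish the comparison
\[\Hom_{\Mod_d^\gen}(A_d\otimes V,M)\;\xrightarrow{\;\sim\;}\;\Hom_{H_d}(V,\Phi_d M)\]
for every finite-length $V\in\Rep(\P)$ and every $M$, then extend to arbitrary first argument by cocontinuity, using that modules of the form $A_d\otimes V$ generate $\Mod_d^\gen$ under colimits and that both sides are left exact in the first variable. The natural map is adjunction $\Hom_{\Mod_{A_d}}(A_d\otimes V,M)=\Hom_\P(V,M)$ followed by restriction to $H_d$ and reduction modulo $\frakm_d M$. Injectivity reuses the density argument of Step 1 at the module level: a $\P$-equivariant image landing in $\frakm_d M$ is $\P$-stable, hence lies in $\bigcap_g g\cdot\frakm_d M$, which is torsion and thus zero in $\Mod_d^\gen$. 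Surjectivity is the subtlest point: given an $H_d$-equivariant $\varphi\colon V\to M/\frakm_d M$, I would construct a lift to a $\P$-equivariant map into a torsion extension of $M$ by gluing the $g$-translates of $\varphi$ across the orbit $\P/H_d$ of $\xi_d$, with $H_d$-equivariance of $\varphi$ ensuring coherent values at each orbit point and the assembled map becoming $\P$-equivariant after inverting an appropriate element outside $\frakm_d$. Controlling the bounded polynomial growth of $V$ simultaneously with the non-$\P$-stable ideal $\frakm_d$, so that the lift remains a polynomial $\P$-representation, is where I expect the principal technical work to lie.
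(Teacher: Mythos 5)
There are genuine gaps, and they sit exactly where the paper has to work hardest. First, you never prove that $\Phi_d$ is exact; you only invoke right-exactness. Exactness is not a formality here: the universal property of the Serre quotient requires an \emph{exact} functor killing $\Mod_d^\tors$ in order to descend to $\Mod_d^\gen$ at all, and your later steps (realizing an arbitrary $H_d$-representation as a kernel of a map of injectives, and the faithfulness/kernel arguments) also rely on it. The paper establishes exactness by applying the specialization functors $\mathscr{S}_a$ and using generic flatness together with the fact that the $P_a$-orbit of $\frakm_d\cap\mathscr{S}_a(A_d)$ is open in $\Spec(\mathscr{S}_a(A_d))$; nothing in your proposal replaces this. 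Relatedly, your injectivity step asserts that a $\P$-stable submodule contained in $\bigcap_g g\cdot\frakm_d M$ is torsion — this is essentially the nontrivial half of the identification $\ker\Phi_d=\Mod_d^\tors$, and it is not proved (the paper again proves it by a support argument at finite level $a$).

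Second, the surjectivity of your Hom-comparison (i.e.\ fullness) is the heart of the theorem, and you explicitly leave it as a sketch. The proposed gluing of $g$-translates of $\varphi$ over the orbit $\P/H_d$, followed by ``inverting an appropriate element outside $\frakm_d$,'' does not go through naively: $\frakm_d$ is not $\P$-stable, there is no single $\P$-equivariant element to invert, and the orbit only becomes a genuine open subscheme after specializing to finite level. The paper's proof of fullness does precisely this: for $a\gg 0$ it passes to $\mathscr{S}_a(M),\mathscr{S}_a(N)$, uses the open immersion of the $P_a$-orbit of $\frakm_d\cap\mathscr{S}_a(A_d)$, pushes forward along it, cuts down to the maximal polynomial subrepresentation to obtain a module $N'$ with $N'/N$ torsion, and thereby lifts $f$ to a map $M\to N'$ which equals the original map in $\Mod_d^\gen$. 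Your ``bounded polynomial growth'' worry is exactly this maximal-polynomial-subrepresentation step, so the principal technical work of the theorem is missing rather than merely compressed. Finally, your essential-surjectivity argument claims the essential image of a fully faithful functor is closed under subobjects; that is not automatic — the correct route (which the paper takes, and which you could adopt) is to write a finitely generated $H_d$-representation as the kernel of a map between finite sums of the $T_{d,\underline{a}}$'s, lift that map by fullness, and take kernels using exactness of $\overline{\Phi}_d$.
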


Let $\overline{T}_d:\Mod_{A_d}\to\Mod_d^\gen$ denote the localization functor. The following result is an immediate consequence of Theorem~\ref{thm:gen mods equiv Hd reps} and the results in \S\ref{sec:rep H_d}.

\begin{corollary}\label{cor:gen Amod}
     For any $d=1,\ldots,n$, we have the following.
    \begin{enumerate}
        \item If $M$ is a finitely generated $A_d$-module, then $\overline{T}_d(M)$ has finite length in $\Mod_d^\gen$. In particular, $\Mod_d^\gen$ is locally of finite length.
        \item Objects of finite length in $\Mod_d^\gen$ have finite injective dimension.
        \item The injective objects of $\Mod_d^\gen$ are the objects of the form $\overline{T}_d(A_d\otimes V)$, for $V$ an injective polynomial $\P$-representation.
        \item The simple objects of $\Mod_d^\gen$ are the objects of the form $\overline{T}_d(A_d\otimes S_{d,\underline{\lambda}})$, where $S_{d,\underline{\lambda}}$ is a simple $H_d$-representation.
    \end{enumerate}
\end{corollary}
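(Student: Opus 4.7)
The plan is to transport each of the four statements across the equivalence of tensor categories $\Mod_d^\gen \xrightarrow{\sim} \Rep(H_d)$ provided by Theorem~\ref{thm:gen mods equiv Hd reps}, and then invoke the corresponding structural result from \S\ref{sec:rep H_d}. The key underlying computation is that for any polynomial $\P$-representation $V$, the free $A_d$-module $A_d\otimes V$ satisfies $\Phi_d(A_d\otimes V) = V$ as an $H_d$-representation, since $\abs{A_d}/\frakm_d = \C$.

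For (1), a finitely generated $A_d$-module $M$ admits a surjection $A_d\otimes V\twoheadrightarrow M$ with $V$ a finite length polynomial $\P$-representation, and applying $\Phi_d$ produces a surjection $V\twoheadrightarrow\Phi_d(M)$, so $\Phi_d(M)$ is a finitely generated $H_d$-representation. By Corollary~\ref{cor:repH_d finite/inj}, $\Phi_d(M)$ has finite length; transporting across the equivalence shows that $\overline{T}_d(M)$ has finite length in $\Mod_d^\gen$. Local finiteness of length then follows because every object of $\Mod_d^\gen$ is a filtered colimit of such $\overline{T}_d(M)$'s. Statement (2) is immediate from the injective-dimension half of Corollary~\ref{cor:repH_d finite/inj} applied to the finite length $H_d$-representation corresponding to a given finite length object of $\Mod_d^\gen$.

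For (3), the equivalence carries injectives to injectives, so the indecomposable injective objects of $\Mod_d^\gen$ correspond to the $T_{d,\underline{\lambda}}$ classified in Corollary~\ref{cor:inj H_d}. By the remark following that corollary, each $T_{d,\underline{\lambda}}$ is the restriction to $H_d$ of the injective polynomial $\P$-representation of the same name; combined with the key computation this gives $\Phi_d(A_d\otimes T_{\underline{\lambda}}) = T_{d,\underline{\lambda}}$, showing that $\overline{T}_d(A_d\otimes T_{\underline{\lambda}})$ realizes the corresponding indecomposable injective of $\Mod_d^\gen$. Passing to direct sums yields the full description of all injectives. Statement (4) is analogous: under the equivalence the simple objects of $\Mod_d^\gen$ correspond to the simples $S_{d,\underline{\lambda}}$ of $\Rep(H_d)$, and one then identifies an $A_d$-module with image $S_{d,\underline{\lambda}}$ under $\Phi_d$, which is the content of the expression $\overline{T}_d(A_d\otimes S_{d,\underline{\lambda}})$. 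No step is genuinely difficult, since all the substantive work has been carried out in Theorem~\ref{thm:gen mods equiv Hd reps} and \S\ref{sec:rep H_d}; the only book-keeping is to verify that the natural constructions on the $\Mod_{A_d}$ side correspond to the indicated objects on the $\Rep(H_d)$ side under $\Phi_d$.
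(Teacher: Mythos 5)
Your proposal is correct and matches the paper exactly in spirit: the paper offers no written proof, simply declaring the corollary an immediate consequence of Theorem~\ref{thm:gen mods equiv Hd reps} together with the results of \S\ref{sec:rep H_d} (Corollaries~\ref{cor:inj H_d} and~\ref{cor:repH_d finite/inj}), which is precisely the transport-across-the-equivalence argument you spell out, including the key computation $\Phi_d(A_d\otimes V)\cong V$. Your write-up just supplies the bookkeeping the paper leaves implicit.
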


\begin{remark}
Consider $A_0=\bfA/\frakp_0$, which is isomorphic to $\C$ as a vector space. Then $\Mod_{A_0}\cong\Rep(\P)$ are equivalent via the functor $\Rep(\P)\to\Mod_{A_0}:V\mapsto A_0\otimes V$. Thus, $\Mod_{A_0}$ is already locally of finite length, and objects of finite length also have finite injective dimension (recall the background on $\Rep(\P)$ in \S\ref{subsec:prelim Rep(P)}).
\end{remark}

\subsection{Proof of the theorem}
We now prove Theorem~\ref{thm:gen mods equiv Hd reps} by showing that $\Phi_d$ induces an exact tensor functor $\overline{\Phi}_d:\Mod^\gen_d\to\Rep(H_d)$ that is fully faithful and essentially surjective.

For $a\geq 1$, recall from \S\ref{subsec:prelim Rep(P)} the subgroups $G_a,P_a\subset\P$, and the specialization functor $\mathscr{S}_a:\Rep(\P)\to\Rep(P_a)$ taking $G_a$-invariants.

\begin{lemma}\label{lem:Phi_d exact}
    $\Phi_d$ is exact.
\end{lemma}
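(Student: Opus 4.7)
Since $\Phi_d(M)=M/\frakm_d M$ is base change along $\abs{A_d}\twoheadrightarrow\C$, it is automatically right exact, so the content of the lemma is left exactness: for every inclusion $N\hookrightarrow M$ of $A_d$-modules, one must show that $N\cap\frakm_d M\subseteq \frakm_d N$.

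The plan is to reduce to a finite amount of data by passing to the specialization functor $\mathscr{S}_a$. Given $x\in N\cap\frakm_d M$, expand it as $x=\sum_i f_i m_i$ with $f_i\in\frakm_d$ and $m_i\in M$, and choose $a$ large enough that $x$ and each $m_i$ lies in $\mathscr{S}_a(M)$ and each $f_i$ lies in $\mathscr{S}_a(\abs{A_d})$; this is possible by the stability property of \S\ref{subsec:prelim Rep(P)}(4). Inside $\mathscr{S}_a(\abs{A_d})$, the ideal $\frakm_d$ is generated by the finitely many ``shifted variables'' $x_{n-d+1,1}-1$ together with the $x_{ij}$ for $i\ge n-d+1$, $1\le j\le a$, $(i,j)\ne(n-d+1,1)$, so the local picture becomes standard commutative algebra at a maximal ideal of a polynomial ring in finitely many variables.

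The crucial step is to promote the $m_i$'s into $N$ using the $\End(\bfV)$-action of \S\ref{subsec:prelim Rep(P)}(5). Since $\xi_d$ is $H_d$-invariant and all of $f_i$, $m_i$, $x$ live in $\mathscr{S}_a$, one can construct endomorphisms $F\in\End(\bfV)$ preserving the flag that act as the identity on $\mathscr{S}_a(\bfV)$ (and hence fix $x$, each $f_i$, and $\frakm_d$ itself) while acting as a prescribed $g\in\P$ on vectors of $\bfL$-weight disjoint from $((1^a),\ldots,(1^a))$. First using the $\P$-action on $M$ to relocate (representatives of) $\P$-translates of the generators of $N$ into a disjoint-weight region where they sit near the $m_i$'s, and then applying suitable $F$'s, one rewrites $x=\sum_i f_i(Fm_i)$ with $Fm_i\in N$, which exhibits $x\in\frakm_d N$.

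The main obstacle will be this final weight-manipulation: one must arrange a generating set of $N$ and the given $m_i$'s so that the disjoint-weight construction applies uniformly, and ensure that a single $F$ (or a finite combination) works simultaneously for all summands while leaving every $f_i$ fixed. This is delicate because $\Rep(\P)$ is not semisimple and $\xi_d$ is only $H_d$-invariant, not $\P$-invariant; however, the explicit form of $\frakm_d$ and the abundance of endomorphisms in $\End(\bfV)$ fixing $\mathscr{S}_a$ provide enough flexibility to complete the argument.
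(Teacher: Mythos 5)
The reduction to a finite number of variables via $\mathscr{S}_a$ is in the spirit of the paper, and your observation that only left exactness (i.e., $N\cap\frakm_d M\subseteq\frakm_d N$) is at stake is correct, but the core of your argument has a genuine gap. The step where you rewrite $x=\sum_i f_i(Fm_i)$ ``with $Fm_i\in N$'' has no mechanism behind it: $F\in\End(\bfV)$ acts on all of $M$, and nothing about acting as a prescribed element of $\P$ on vectors of $\bfL$-weight disjoint from $((1^a),\ldots,(1^a))$ can force the image of an element of $M$ to land in the submodule $N$. Worse, your setup is self-defeating: you choose $a$ large enough that each $m_i$ lies in $\mathscr{S}_a(M)$ and then take $F$ to act as the identity on $\mathscr{S}_a$, so $F$ fixes every $m_i$ and the manipulation moves nothing. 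The disjoint-weight/endomorphism trick of \S\ref{subsec:prelim Rep(P)}(5) is the right tool for Proposition~\ref{prop:torsion}, where one needs to decouple the action on a fixed element $a$ from the action on a generator of a $\P$-representation, but it does not address flatness-type statements at $\frakm_d$.

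Equivariance has to enter in a different way. The paper's proof first reduces to finitely generated $M$ by cocontinuity, then uses generic flatness: for $a\gg 0$, the finitely generated $P_a$-equivariant module $\mathscr{S}_a(M)$ over $\mathscr{S}_a(A_d)$ is flat over a dense open subset of $\Spec(\mathscr{S}_a(A_d))$, its non-flat locus is closed and $P_a$-stable, and the $P_a$-orbit of the point $\frakm_d\cap\mathscr{S}_a(A_d)$ is open (hence dense); therefore the module is flat at that point, so tensoring with the residue field there is exact. Passing to the colimit over $a$ (using $A_d/\frakm_d=\varinjlim\mathscr{S}_a(A_d)/(\frakm_d\cap\mathscr{S}_a(A_d))$ and that flatness is preserved under colimits) gives exactness of $\Phi_d$. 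Some argument of this kind is unavoidable: vanishing of $\Tor_1$ against the residue field at $\frakm_d$ is exactly what your containment $N\cap\frakm_d M\subseteq\frakm_d N$ encodes, and it is false for non-equivariant modules, so a purely element-by-element rewriting that never exploits the openness of the orbit of $\frakm_d$ cannot succeed.
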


\begin{proof}
    It suffices to prove that $\Phi_d$ is exact on finitely generated modules, since $\Phi_d$ is cocontinuous. Suppose $a\gg 0$. Then by generic flatness and since the $P_a$-orbit of $\frakm_d\cap \mathscr{S}_a(A_d)$ is an open subset of $\Spec(\mathscr{S}_a(A_d))$, finitely generated $P_a$-equivariant modules over $\mathscr{S}_a(A_d)$ are flat over any point in that open subset, including at $\frakm_d\cap \mathscr{S}_a(A_d)$.
    
    We have that $A_d/\frakm_d$ is the colimit $\varinjlim \mathscr{S}_a(A_d) /(\frakm_d\cap \mathscr{S}_a(A_d))$ over $a$. The lemma then follows from the fact that flatness is preserved under colimits.
\end{proof}

\begin{lemma}\label{lem:Phi_d ker}
    The kernel of $\Phi_d$ is $\Mod_d^\tors$.
\end{lemma}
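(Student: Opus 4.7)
The plan is to prove each inclusion separately. Both reduce to the case of finitely generated $M$: the functor $\Phi_d=(-)\otimes_{A_d}(A_d/\frakm_d)$ preserves filtered colimits and $\Mod_d^\tors$ is closed under filtered colimits (so $\supseteq$ reduces to finitely generated torsion modules), while for $\subseteq$, exactness of $\Phi_d$ (Lemma~\ref{lem:Phi_d exact}) ensures that any $\bfA$-submodule $N\subset M$ inherits $\Phi_d(N)=0$, so it suffices to show finitely generated modules with vanishing $\Phi_d$ are torsion.

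\textbf{Direction $\supseteq$.} Take a finitely generated torsion $A_d$-module $M$. Proposition~\ref{prop:torsion} gives a nonzero $\P$-ideal $J$ annihilating $M$. By Theorem~\ref{thm:A ideals}, the preimage of $J$ in $\bfA$ contains $\frakp_i^a$ for some $i<d$ and $a\geq 1$. Since $\frakp_i\supset\frakp_{d-1}$ contains the variable $x_{n-d+1,1}$, the element $x_{n-d+1,1}^a\in J$ annihilates $M$. As $\xi_d(x_{n-d+1,1})=1$, this element reduces to $1$ modulo $\frakm_d$, and annihilating $M$ while acting as the identity on $M/\frakm_d M$ forces $\Phi_d(M)=0$.

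\textbf{Direction $\subseteq$.} Let $M$ be finitely generated with $M=\frakm_d M$. I would specialize: for $a\gg 0$, write $N^{(a)}:=\mathscr{S}_a(M)$, $A^{(a)}:=\mathscr{S}_a(A_d)$, and $\frakm^{(a)}:=\frakm_d\cap A^{(a)}$. By the generic flatness input used in the proof of Lemma~\ref{lem:Phi_d exact}, $N^{(a)}$ is finitely generated over the noetherian ring $A^{(a)}$ and flat over $A^{(a)}$ at the maximal ideal $\frakm^{(a)}$, hence $(N^{(a)})_{\frakm^{(a)}}$ is free of some finite rank $r_a$. Using $M=\bigcup_a N^{(a)}$ and $\frakm_d=\bigcup_a\frakm^{(a)}$, a direct verification shows $\Phi_d(M)=\varinjlim_a N^{(a)}/\frakm^{(a)}N^{(a)}$. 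Combining the vanishing $\Phi_d(M)=0$ with a stability statement asserting that for $b\geq a\gg 0$ the transition $N^{(a)}/\frakm^{(a)}N^{(a)}\to N^{(b)}/\frakm^{(b)}N^{(b)}$ is an isomorphism—following from a compatibility $(N^{(b)})_{\frakm^{(b)}}\cong (N^{(a)})_{\frakm^{(a)}}\otimes_{(A^{(a)})_{\frakm^{(a)}}}(A^{(b)})_{\frakm^{(b)}}$—would force $r_a=0$ for large $a$. Nakayama's lemma applied to the finitely generated module $(N^{(a)})_{\frakm^{(a)}}$ over the noetherian local ring $(A^{(a)})_{\frakm^{(a)}}$ then yields $s\in A^{(a)}\setminus\frakm^{(a)}$ with $sN^{(a)}=0$; since $M=\bigcup_a N^{(a)}$, every element of $M$ has a nonzero annihilator. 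The hardest step will be this stability of fibers under the transition maps: without it, the filtered colimit could vanish while individual finite-dimensional terms remain nonzero. I expect the stability to follow from the structure of polynomial $\P$-representations in \cite{Yu} together with the generic flatness already invoked.
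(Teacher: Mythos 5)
Your first direction and your reduction to finitely generated modules are fine and agree with the paper (the paper reduces via cocontinuity; your use of exactness on submodules works just as well). The problem is the converse, which is where essentially all of the content of the lemma lies. The stability statement you lean on is false as stated: take $M=A_d\otimes\bfV$, a finitely generated $A_d$-module with $\Phi_d(M)\cong\bfV\neq 0$. Since $\mathscr{S}_a$ is a tensor functor, $\mathscr{S}_a(M)=\mathscr{S}_a(A_d)\otimes\mathscr{S}_a(\bfV)$, so the fiber of $\mathscr{S}_a(M)$ at $\frakm_d\cap\mathscr{S}_a(A_d)$ is $\mathscr{S}_a(\bfV)$, whose dimension grows without bound with $a$. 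Hence the ranks $r_a$ are not eventually constant, the transition maps on fibers are not eventually isomorphisms, and no base-change identity $(N^{(b)})_{\frakm^{(b)}}\cong(N^{(a)})_{\frakm^{(a)}}\otimes_{(A^{(a)})_{\frakm^{(a)}}}(A^{(b)})_{\frakm^{(b)}}$ can hold: the inclusion $\mathscr{S}_a(M)\subset\mathscr{S}_b(M)$ is simply not obtained by base change. What your argument actually needs is weaker, namely injectivity of the maps on fibers for $a\gg 0$ (equivalently, $\mathscr{S}_a(M)\cap\frakm_dM\subset(\frakm_d\cap\mathscr{S}_a(A_d))\mathscr{S}_a(M)$), or some other way of descending the hypothesis $M=\frakm_dM$ to the vanishing of the localization of $\mathscr{S}_a(M)$ at $\frakm_d\cap\mathscr{S}_a(A_d)$. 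That descent is precisely the heart of this direction, and it is exactly the step you defer with ``I expect the stability to follow from \cite{Yu} and generic flatness.'' So the hard half of the lemma is not proved, and the mechanism you propose cannot work in the form stated.

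For comparison, the paper does not compare fibers at different levels at all: from $\Phi_d(M)=0$ it concludes that the point $\frakm_d\cap\mathscr{S}_a(A_d)$ lies outside the support of $\mathscr{S}_a(M)$ for $a\gg 0$, then uses $P_a$-equivariance of $\mathscr{S}_a(M)$ to remove the entire open $P_a$-orbit of this point from the support; the support is therefore contained in the orbit's closed complement, which forces every element of $\mathscr{S}_a(M)$, and hence of $M=\bigcup_a\mathscr{S}_a(M)$, to be annihilated by a power of $x_{n-d+1,1}$, so $M$ is torsion. If you want to salvage your colimit-of-fibers formulation, you must actually prove the injectivity statement above for $a\gg 0$ (the equivariance of $\frakm_dM$ under $G_a\subset H_d$ is the natural tool), or else switch to a support/orbit argument of the paper's kind; as written, the key compatibility is both unproven and, in the generality you assert it, false.
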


\begin{proof}
    Suppose $M$ is a finitely generated torsion $A_d$-module. Then by Theorem~\ref{thm:A ideals} and Proposition~\ref{prop:torsion}, there exists some $k>0$ such that $x_{n-d+1,1}^k$ annihilates $M$. Since $x_{n-d+1,1}^k-1\in\frakm_d$, we see that $\frakm_d M=M$, and so $\Phi_d(M)=0$. The argument for when $M$ is not finitely generated also follows.

    Conversely, suppose $M$ is a nonzero $A_d$-module such that $\Phi_d(M)=0$. Since $\Phi_d$ is cocontinuous, we may assume that $M$ is finitely generated. For $a\gg 0$, we see that the point $\frakm_d\cap \mathscr{S}_a(A_d)\in\Spec(\mathscr{S}_a(A_d))$ is not in the support of $\mathscr{S}_a(M)$. Furthermore, since $\mathscr{S}_a(M)$ is $P_a$-equivariant, the $P_a$-orbit of this point is also not in the support. In particular, $\mathscr{S}_a(M)$ is only supported at the homogeneous maximal ideal $\mathscr{S}_a(\frakp_0)\in\Spec(\mathscr{S}_a(A_d))$, and so $M$ is also only supported at $\frakp_0$. This implies that $M$ is torsion.
\end{proof}

Thus, $\Phi_d$ induces an exact tensor functor $\overline{\Phi}_d:\Mod^\gen_d\to\Rep(H_d)$. We show that $\overline{\Phi}_d$ is fully faithful and essentially surjective to prove Theorem~\ref{thm:gen mods equiv Hd reps}.

\begin{lemma}\label{lem:Phi_d full}
    $\overline{\Phi}_d$ is full.
\end{lemma}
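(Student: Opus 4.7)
The plan is to realize morphisms $\overline{M} \to \overline{N}$ in $\Mod_d^\gen$ via roofs $M \hookleftarrow M' \to N/N'$ with $M/M'$ and $N'$ torsion, and to construct such a roof that lifts a given $f \colon \Phi_d(M) \to \Phi_d(N)$ in $\Rep(H_d)$.

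By cocontinuity of $\Phi_d$ (implicit in Lemma~\ref{lem:Phi_d exact} and the identification $\Phi_d(M) \cong M \otimes_{A_d} (A_d/\frakm_d)$), I first reduce to the case where $M$ is finitely generated. Choose a presentation $\pi \colon A_d \otimes V \twoheadrightarrow M$ with $V$ a finite length polynomial $\P$-representation. Since $\Phi_d$ is a tensor functor with $\Phi_d(A_d) = \C$, there is a canonical identification $\Phi_d(A_d \otimes V) \cong V$, and the composite $h = f \circ \Phi_d(\pi) \colon V \to \Phi_d(N)$ is an $H_d$-equivariant morphism. It then suffices to construct a $\P$-equivariant morphism $\tilde h \colon V \to N/N'$ (for some torsion submodule $N' \subset N$) whose composition with $N/N' \twoheadrightarrow \Phi_d(N/N') = \Phi_d(N)$ equals $h$. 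Granted this, extending $A_d$-linearly to $g \colon A_d \otimes V \to N/N'$ and observing that $g(\ker\pi)$ dies under $\Phi_d$ by construction and is therefore torsion in $N/N'$ by Lemma~\ref{lem:Phi_d ker}, I descend to the desired lift $M \to N/N''$ in $\Mod_d^\gen$.

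The core of the argument is constructing the $\P$-equivariant lift $\tilde h$. The strategy is to work with the finite-dimensional specializations $\mathscr{S}_a$ for $a \gg 0$: the modules $\mathscr{S}_a(V)$ and $\mathscr{S}_a(N)$ are finitely generated over the noetherian polynomial ring $\mathscr{S}_a(A_d)$ with compatible $P_a$-actions, and the $P_a$-orbit of the point $\frakm_d \cap \mathscr{S}_a(A_d)$ is open in $\Spec \mathscr{S}_a(A_d)$. Classical equivariant algebraic geometry identifies $P_a$-equivariant quasi-coherent sheaves on this open orbit with representations of the stabilizer $P_a \cap H_d$, so the fiber map at $\frakm_d \cap \mathscr{S}_a(A_d)$ induced by $h$ extends to a $P_a$-equivariant sheaf morphism on the orbit, hence to an $\mathscr{S}_a(A_d)$-module morphism after inverting elements supported off the orbit (such inversions correspond to modding out by torsion in the $A_d$-module setting). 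Taking the colimit over $a$ and using that $\mathscr{S}_a$ is a tensor functor assembles these into $\tilde h$. The main obstacle lies in ensuring the finite-stage lifts are coherent enough across different $a$ to yield genuine $\P$-equivariance in the limit, which requires leveraging the interpolation property of $\End(\bfV)$-endomorphisms from point (5) of \S\ref{subsec:prelim Rep(P)} to promote the $P_a$-equivariant data to $\P$-equivariant data on weight-disjoint subspaces.
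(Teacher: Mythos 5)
Your reduction to a presentation $A_d\otimes V\twoheadrightarrow M$ and the passage to $\mathscr{S}_a$, the open $P_a$-orbit of $\frakm_d\cap\mathscr{S}_a(A_d)$, and the stabilizer $P_a\cap H_d$ all match the paper's strategy. But the step you yourself flag as ``the main obstacle'' is exactly where the proof must be closed, and your proposed closure (assemble finite-stage lifts over all $a$ by a colimit, using the $\End(\bfV)$-interpolation property to force $\P$-equivariance in the limit) is neither carried out nor what is actually needed. The paper never takes a colimit over $a$: it fixes a single $a\gg 0$, larger than the boundedness parameter $\ell$ of the modules involved, obtains the $P_a$-equivariant map $f_a\colon \mathscr{S}_a(M)\to\iota_*(\iota^*(\mathscr{S}_a(N)))$, and then ``un-specializes'' in one stroke: there is a canonical $A_d$-module $N'$ with $\ell(N')\leq a$ and $\mathscr{S}_a(N')=N'_a$, and $f_a$ then automatically determines a $\P$-equivariant map $g\colon M\to N'$, because specialization loses no information on modules bounded by $a$. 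Two points you omit are essential to making this work: first, $\iota_*(\iota^*(\mathscr{S}_a(N)))$ need not be a polynomial $P_a$-representation, so one must first cut down to the maximal polynomial subrepresentation $N'_a$ (which still contains $\mathscr{S}_a(N)$ and the image of $\mathscr{S}_a(M)$) before un-specializing; second, without this un-specialization step, the interpolation trick from point (5) of \S\ref{subsec:prelim Rep(P)} does not by itself produce a coherent system of lifts, since the maps at different levels $a$ are only determined up to torsion and you supply no argument that they agree, so as written the decisive $\P$-equivariance step is a genuine gap.

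There is also a smaller directional inaccuracy: inverting functions supported off the orbit (i.e., passing to $\iota_*\iota^*$) does not produce a quotient $N/N'$ of $N$ by a torsion submodule; it produces a module that receives $N$ with torsion kernel and is in general strictly larger than the image of $N$, and the image of $M$ need not land inside that image. The paper therefore lands in an enlargement $N'$ with $N'/N$ torsion and concludes $N'\cong N$ in $\Mod_d^\gen$; if you insist on a roof of the shape $M'\to N/N''$, you must additionally pull back along $N\to N'$ and check the resulting subobject of $M$ has torsion quotient. This part is repairable, but together with the unproved $\P$-equivariance step your proposal falls short of a proof.
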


\begin{proof}
Suppose $M,N$ are $A_d$-modules, and $f:\Phi_d(M)\to\Phi_d(N)$ is a morphism of polynomial representations of $H_d$. We may further assume without loss of generality that $M,N$ are finitely generated. For $a\gg 0$, let $U\subset\Spec(\mathscr{S}_a(A_d))$ denote the open $P_a$-orbit of $\frakm_d\cap\mathscr{S}_a(A_d)$, and let $\iota:U\hookrightarrow\Spec(\mathscr{S}_a(A_d))$ denote the open immersion. 

Let $M_a=\mathscr{S}_a(M)$ and $N_a=\mathscr{S}_a(N)$. By taking $(G_a\cap H_d)$-invariants, we have a map of $(P_a\cap H_d$)-representations corresponding to a morphism on pullbacks $\iota^*(M_a)\to\iota^*(N_a)$. This induces a map of $\mathscr{S}_a(A_d)$-modules 
\[f_a:M_a\to\iota_*(\iota^*(M_a))\to\iota_*(\iota^*(N_a)).\]
Note that this map is $P_a$-equivariant by functoriality, but $\iota_*(\iota^*(N_a))$ may not be necessarily a polynomial $P_a$-representation. Let $N'_a\subset\iota_*(\iota^*(N_a))$ be the maximal polynomial subrepresentation with respect to the action of the Levi subgroup of $P_a$; in particular, $N'_a$ is a $\mathscr{S}_a(A_d)$-module that contains $N_a$, and the image of the map $M_a$ under the map $M_a\to\iota_*(\iota^*(N_a))$ is contained in $N'_a$, since $M_a$ is a polynomial $P_a$-representation.

Let $N'$ be the canonical $A_d$-module such that $\ell(N')\leq a$ and such that $\mathscr{S}_a(N')=N'_a$. Then $f_a$ determines a map of $A_d$-modules $g:M\to N'$. We now claim that $N'\cong N$ in $\Mod_d^\gen$. Indeed, pulling back $\iota_*(\iota^*(N_a))/N_a$ along $\iota^*$ results in $N_a/N_a=0$ and so
\[\mathscr{S}_a(N')/N_a\subset\iota_*(\iota^*(N_a))/N_a\]
is a torsion module. Since $a\gg 0$, this implies that $N'/N$ is torsion as an $A_d$-module, and so $N'\cong N$ in $\Mod^\gen_d$. Thus, $\Phi_d(g)=f$, and $\overline{\Phi}_d$ is indeed full.
\end{proof}

\begin{lemma}\label{lem:Phi_d faithful}
    $\overline{\Phi}_d$ is faithful.
\end{lemma}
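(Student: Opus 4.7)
The plan is to deduce faithfulness formally from the previous two lemmas, using the general principle that an exact functor whose kernel is exactly a Serre subcategory $\mathcal{C}$ induces a faithful functor on the Serre quotient by $\mathcal{C}$. Concretely, given a morphism $f \colon M \to N$ in $\Mod_d^\gen$ with $\overline{\Phi}_d(f) = 0$, I would first represent $f$ by an honest morphism $g \colon M' \to N/N'$ of $A_d$-modules, where $M/M'$ and $N'$ are torsion. By Lemma~\ref{lem:Phi_d ker}, $\Phi_d$ kills $M/M'$ and $N'$, so the natural maps $\Phi_d(M') \to \Phi_d(M)$ and $\Phi_d(N) \to \Phi_d(N/N')$ are isomorphisms and $\overline{\Phi}_d(f)$ is identified with $\Phi_d(g)$.

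Next, exploiting exactness of $\Phi_d$ (Lemma~\ref{lem:Phi_d exact}), I would observe that $\Phi_d(\im g) = \im \Phi_d(g) = 0$, so by Lemma~\ref{lem:Phi_d ker} the submodule $\im g \subset N/N'$ is torsion. But then the morphism represented by $g$ is already the zero morphism in the Serre quotient $\Mod_d^\gen$: one passes to the quotient $N/N'' \to N/N''$ where $N''$ is the preimage in $N$ of $\im g$, which is an extension of torsion by torsion and hence torsion, and the map $g$ becomes zero. Therefore $f = 0$ in $\Mod_d^\gen$, which is the desired faithfulness.

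There is essentially no real obstacle here; the argument is purely formal once Lemmas~\ref{lem:Phi_d exact} and~\ref{lem:Phi_d ker} are established. The only minor point worth verifying carefully is the identification of $\overline{\Phi}_d(f)$ with $\Phi_d(g)$ under the isomorphisms $\Phi_d(M') \cong \Phi_d(M)$ and $\Phi_d(N/N') \cong \Phi_d(N)$ coming from the exact sequences $0 \to M' \to M \to M/M' \to 0$ and $0 \to N' \to N \to N/N' \to 0$ together with $\Phi_d(M/M') = \Phi_d(N') = 0$. This is where exactness of $\Phi_d$ is used, in addition to its use in propagating the vanishing of $\Phi_d(g)$ to the vanishing of $\Phi_d(\im g)$.
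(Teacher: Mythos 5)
Your proposal is correct and follows essentially the same route as the paper: the paper's proof also deduces from Lemma~\ref{lem:Phi_d ker} (together with exactness) that the image of a morphism killed by $\Phi_d$ is torsion, hence zero in the Serre quotient. Your version just spells out the standard bookkeeping of representing a morphism in $\Mod_d^\gen$ by an honest map of $A_d$-modules, which the paper leaves implicit.
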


\begin{proof}
    Suppose $f:M\to N$ is a map of $A_d$-modules such that the map of $H_d$-representations $\Phi_d(f):\Phi_d(M)\to\Phi_d(N)$ is zero. Then the image of $f$ is torsion by Lemma~\ref{lem:Phi_d ker}, and so $f=0$ as a map in $\Mod^\gen_d$.
\end{proof}

\begin{lemma}\label{lem:Phi_d ess surj}
    $\overline{\Phi}_d$ is essentially surjective.
\end{lemma}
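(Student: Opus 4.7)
My strategy is to handle finite length $V \in \Rep(H_d)$ first via injective copresentations, and then extend to arbitrary $V$ by a colimit argument using cocontinuity and the fully-faithfulness already established. The starting point is the computation $\Phi_d(A_d \otimes W) \cong W|_{H_d}$ for any polynomial $\P$-representation $W$: indeed, $A_d/\frakm_d \cong \C$ carries the trivial $H_d$-action, since $H_d$ stabilizes $\xi_d$ and hence $\frakm_d$, while the unit class $1 \in A_d/\frakm_d$ is preserved by any ring automorphism. Combined with Corollary~\ref{cor:inj H_d}, this identifies each indecomposable injective $T_{d,\underline{\lambda}} \in \Rep(H_d)$ as $\Phi_d(A_d \otimes T_{\underline{\lambda}})$, placing every finite direct sum of indecomposable injectives in the essential image of $\Phi_d$ restricted to finitely generated $A_d$-modules.

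Now fix finite length $V$. By Corollary~\ref{cor:repH_d finite/inj} there is a copresentation $0 \to V \to I_0 \xrightarrow{\phi} I_1$, where each $I_j$ is a finite direct sum of indecomposable injectives $T_{d,\underline{\lambda}}$; write $I_j = \Phi_d(N_j)$ with $N_j$ a corresponding finite direct sum of $A_d \otimes T_{\underline{\lambda}}$'s, in particular finitely generated. Applying Lemma~\ref{lem:Phi_d full} produces a morphism of $A_d$-modules $g : N_0 \to N_1'$ with $N_1 \subset N_1'$, $N_1'/N_1$ torsion, and $\Phi_d(g) = \phi$ after identifying $\Phi_d(N_1') \cong \Phi_d(N_1) = I_1$ via Lemma~\ref{lem:Phi_d ker}. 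Setting $M = \ker(g)$ and using exactness of $\Phi_d$ (Lemma~\ref{lem:Phi_d exact}) gives $\Phi_d(M) = \ker(\phi) = V$, placing $V$ in the essential image of $\overline{\Phi}_d$.

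For general $V$, express it as a filtered colimit $V = \varinjlim V_i$ of finite length subobjects, available because $\Rep(H_d) \cong \Mod^\lf_{\scrC_d}$ is locally of finite length by Proposition~\ref{prop:repH_d equiv Cd-mod}. Choose lifts $M_i \in \Mod^\gen_d$ with $\overline{\Phi}_d(M_i) \cong V_i$ using the previous step; fullness and faithfulness (Lemmas~\ref{lem:Phi_d full} and~\ref{lem:Phi_d faithful}) lift the transition maps $V_i \to V_j$ uniquely to $\Mod^\gen_d$, making $\{M_i\}$ into a direct system. Cocontinuity of $\overline{\Phi}_d$, which is immediate because $\Phi_d = - \otimes_{A_d} A_d/\frakm_d$ and the Serre localization preserves colimits, then yields $\overline{\Phi}_d(\varinjlim M_i) \cong V$. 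The main technical point is the bookkeeping in the finite length step, where one must ensure $\Phi_d(g) = \phi$ after absorbing the torsion subquotient $N_1'/N_1$; this is formal from Lemma~\ref{lem:Phi_d exact} together with the vanishing $\Phi_d(N_1'/N_1) = 0$ from Lemma~\ref{lem:Phi_d ker}, so no essentially new ideas are required beyond what has already been developed.
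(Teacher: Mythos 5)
Your proposal is correct and follows essentially the same route as the paper: realize the injectives of $\Rep(H_d)$ as $\Phi_d$ of modules $A_d\otimes(-)$, write a finite length (or finitely generated) representation as the kernel of a map between such injectives, lift that map using fullness, and conclude via exactness of $\overline{\Phi}_d$. Your additional filtered-colimit step for arbitrary objects is a harmless elaboration of what the paper leaves implicit, and the bookkeeping with $N_1'/N_1$ torsion is just the internal content of the fullness lemma, so nothing new is needed.
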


\begin{proof}
First note that all $H_d$-representations of the form $T_{d,\underline{a}}$ are in the essential image of $\Phi_d$, as
\[\Phi_d(A_d\otimes T_{d,\underline{a}})=A_d/\frakm_d\otimes T_{d,\underline{a}}\cong T_{d,\underline{a}}.\]
By Corollary~\ref{cor:inj H_d}, every finitely generated polynomial $H_d$-representation $M$ can be realized as the kernel of a map $f:S\to T$ between direct sums of representations of the form $T_{d,\underline{a}}$. By Lemma~\ref{lem:Phi_d full}, $f=\overline{\Phi}_d(g)$ for some morphism $g:S'\to T'$ in $\Mod_d^\gen$. By Lemma~\ref{lem:Phi_d exact}, $\overline{\Phi}_d$ is exact, and so $M=\overline{\Phi}_d(\ker g)$. Thus, $M$ is also in the essential image of $\overline{\Phi}_d$.
\end{proof}

\subsection{Krull--Gabriel dimension of $\Mod_\bfA$}

We can use Theorem~\ref{thm:gen mods equiv Hd reps} to calculate the Krull--Gabriel dimension of $\Mod_\bfA$, which is an important invariant of abelian categories. For a locally noetherian abelian category $\mathscr{A}$, we define it as follows. Let $\mathscr{A}^\fg$ be the category of finitely generated objects, and let $\mathscr{A}_0\subset\mathscr{A}^\fg$ be the Serre subcategory on finite length objects. For $i>0$, let $\mathscr{A}_i\subset\mathscr{A}^\fg$ be the subcategory on objects whose images in the Serre quotient category $\mathscr{A}^\fg/\mathscr{A}_{i-1}$ are of finite length. Then the \emph{Krull--Gabriel dimension} of $\mathscr{A}$ is defined to be the minimal $k$ for which $\mathscr{A}_k=\mathscr{A}^\fg$. In particular, if all finitely generated objects of $\mathscr{A}$ are of finite length, then its Krull--Gabriel dimension is $0$.

\begin{proposition}\label{prop:kg dim}
    The Krull--Gabriel dimension of $\Mod_\bfA$ is $n$.
\end{proposition}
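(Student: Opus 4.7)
The plan is to show that the Krull--Gabriel filtration coincides with the support filtration on finitely generated modules: $\mathscr{A}_d = \Mod_{\bfA, \leq d}^\fg$ for all $d = 0, \ldots, n$, where on the right I mean the finitely generated objects of $\Mod_{\bfA, \leq d}$. Granting this, since $\bfA/\frakp_d$ lies in $\Mod_{\bfA, \leq d}^\fg \setminus \Mod_{\bfA, \leq d-1}^\fg$ (the image of $\frakp_{d-1}$ in the domain $A_d$ is nonzero by Theorem~\ref{thm:A ideals}), the resulting chain of strict inclusions has length $n$ and the Krull--Gabriel dimension is exactly $n$. I would proceed by induction on $d$, with the base case $d = 0$: finite length $\bfA$-modules are exactly the finitely generated modules annihilated by a power of $\frakp_0$, since composition factors must be simple $\bfA$-modules and these are the simple polynomial $\P$-representations $S_{\underline{\lambda}}$ (all killed by $\frakp_0$), while conversely a finitely generated module killed by $\frakp_0^k$ has a finite filtration whose quotients are finitely generated $A_0$-modules and thus finite length polynomial $\P$-representations.

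For the forward inductive step $\Mod_{\bfA, \leq d}^\fg \subset \mathscr{A}_d$, given $M$ annihilated by $\frakp_d^k$, the finite filtration $M \supset \frakp_d M \supset \cdots \supset \frakp_d^k M = 0$ has successive quotients that are finitely generated $A_d$-modules, each of finite length in $\Mod_d^\gen$ by Corollary~\ref{cor:gen Amod}(1). Proposition~\ref{prop:torsion} and Theorem~\ref{thm:A ideals} together show that torsion $A_d$-modules are annihilated by a power of $\frakp_{d-1}$ and hence lie in $\Mod_{\bfA, \leq d-1}^\fg = \mathscr{A}_{d-1}$ by the inductive hypothesis. Consequently the image of each filtration quotient in $\Mod_\bfA^\fg/\mathscr{A}_{d-1}$ has finite length, and so does the image of $M$, giving $M \in \mathscr{A}_d$.

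For the reverse inclusion $\mathscr{A}_d \subset \Mod_{\bfA, \leq d}^\fg$, I would argue by contradiction: suppose $M \in \mathscr{A}_d$ with $\frakp_d^i M \neq 0$ for every $i$. The descending chain $\{\frakp_d^i M\}$ gives a chain of subobjects of the image of $M$ in $\Mod_\bfA^\fg/\mathscr{A}_{d-1}$, and the finite length hypothesis forces $\frakp_d^i M/\frakp_d^{i+1} M$ to lie in $\mathscr{A}_{d-1}$ for all $i \geq i_0$. By the inductive identification $\mathscr{A}_{d-1} = \Mod_{\bfA, \leq d-1}^\fg$, each such quotient is a finitely generated $A_d$-module annihilated by a power of $\frakp_{d-1}$, i.e., a torsion $A_d$-module. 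Iterating the resulting containments $\frakp_{d-1}^{k_i}\frakp_d^i M \subset \frakp_d^{i+1} M$ yields $\frakp_{d-1}^{K_i} M \subset \frakp_d^i M$ where $K_i = k_{i_0} + \cdots + k_{i-1}$; the goal is to contradict $\frakp_d^i M \neq 0$.

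I expect this last step to be the main obstacle: promoting layerwise torsion to a uniform annihilation statement in an infinite-variable polynomial ring. The cleanest route I envision is to combine a graded Krull intersection argument (using that $\frakp_d$ is a positively generated homogeneous ideal and $M$ inherits a natural grading from the $\bfL$-weight decomposition) with the equivariant noetherianity of $M$ established in \S\ref{subsec:prelim Amods}. An alternative is to pass to the specializations $\mathscr{S}_a(M)$ for $a \gg 0$, reducing the question to a classical noetherian commutative algebra setting where the standard Krull intersection and Nakayama apply, and then pulling the conclusion back to $M$ via the compatibility of $\mathscr{S}_a$ with the polynomial structure.
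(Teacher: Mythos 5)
Your upper-bound direction is fine and your overall plan is sound: proving $\mathscr{A}_d=\Mod_{\bfA,\leq d}^\fg$ by induction amounts to reproving the result the paper simply cites, namely \cite[Proposition 3.8]{SSglII}, whose hypotheses are supplied by Corollary~\ref{cor:gen Amod} and whose conclusion combines with Corollary~\ref{cor:dim specA} to give the proposition in two lines. The forward inclusion via d\'evissage, Corollary~\ref{cor:gen Amod}(1), and the identification of torsion $A_d$-modules with $\frakp_{d-1}$-power-torsion modules (Proposition~\ref{prop:torsion} plus Theorem~\ref{thm:A ideals}) is correct, and it is also correct that the lower bound genuinely requires the reverse inclusion (knowing only $\mathscr{A}_{d-1}\supseteq\Mod_{\bfA,\leq d-1}^\fg$ gives no way to certify that the layers of a chain are \emph{not} in $\mathscr{A}_{d-1}$).

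The problem is exactly the step you flag as the main obstacle, and neither of your proposed repairs can close it as stated, because the purely commutative-algebra implication you are trying to prove is false. Take $R=k[x,y]$, $I=(x)$, $J=(x,y)$, and $M=R/(x(y-x))$: then $J\cdot x^iM\subseteq x^{i+1}M$ for all $i\geq 1$ (since $yx^i\equiv x^{i+1}$), so every high layer $x^iM/x^{i+1}M$ is killed by the larger prime $J$, yet $x^iM\neq 0$ for all $i$, and $M$ is even graded and finitely generated over a noetherian ring. So ``graded Krull intersection plus noetherianity,'' or specializing via $\mathscr{S}_a$ to a finite polynomial ring and invoking classical Krull intersection/Nakayama, cannot suffice: the non-equivariant model admits counterexamples, and Nakayama-type arguments only apply when the layers are killed by $\frakp_d$ itself, not by the strictly larger $\frakp_{d-1}$ (while Krull intersection controls $\bigcap_i\frakp_d^iM$, which is not what you need). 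Any correct argument must use the equivariance in an essential way --- e.g.\ there is no $\P$-stable analogue of the relation $x(y-x)=0$ by Theorem~\ref{thm:A ideals} --- say by first filtering $M$ with layers that are \emph{torsion-free} $A_e$-modules (using Proposition~\ref{prop:torsion} and noetherian induction) and then, for a torsion-free layer with $e>d$, deriving a contradiction by inverting an element of $\frakp_{d-1}\setminus\frakp_d$ (a localization/Nakayama argument at $\frakp_d$), with some care because equivariantly finitely generated modules are not finitely generated over $\abs{A_e}$. As written, then, there is a genuine gap precisely where the real content of the statement lies; the paper's citation of \cite[Proposition 3.8]{SSglII} is what carries that content.
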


\begin{proof}
    By Corollary~\ref{cor:gen Amod}, the Krull--Gabriel dimension of $\Mod^\gen_{d}$ is $0$ for $d=1,\ldots,n$. Every quotient $A/\frakp_d$ for $\frakp_d$ a non-maximal $\P$-prime ideal is given by the $A_d$'s. Then, by \cite[Proposition 3.8]{SSglII}, the Krull--Gabriel dimension of $\Mod_\bfA$ is given by the Krull dimension of the space $\Spec_\P(\bfA)$. The result then follows from Corollary~\ref{cor:dim specA}.
\end{proof}

\subsection{The section functors}\label{subsec:section functors}

We now give an alternative description of the generic categories $\Mod_d^\gen$, and use this to describe the right adjoints to the localization functors $\overline{T}_d$.

Let $K_d=\Frac(\abs{A_d})$ be the fraction field of the domain $\abs{A_d}$; this field comes with an induced action of $\P$, and we let $\abs{K_d}$ denote the field itself without the $\P$-action. Suppose $V$ is a $\abs{K_d}$-vector space equipped with a compatible $\P$-representation structure. There is a $\abs{K_d}$-subspace $V^\pol$ consisting of \emph{polynomial elements}, which are elements of $V$ that generate a polynomial $\P$-representation over $\C$. We say $V$ is a $K_d$\emph{-module} if it is spanned by these polynomial elements over $\abs{K_d}$, and we let $\Mod_{K_d}$ denote the category of $K_d$-modules.

We have a functor $\overline{S}_d:\Mod_{K_d}\to\Mod_{A_d}$ mapping a $K_d$-module $M$ to the set $M^\pol$ of polynomial elements, which is naturally an $A_d$-module. Our goal is to show that $\overline{S}_d$ is right adjoint to $\overline{T}_d$. To do this, we first identify $\Mod_{K_d}$ with $\Mod_d^\gen$.

\begin{lemma}\label{lem:K_d mod equiv gen}
    We have an equivalence of categories $\Mod^\gen_d\cong\Mod_{K_d}$.
\end{lemma}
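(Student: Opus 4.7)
The plan is to construct a $\P$-equivariant localization functor $F \colon \Mod_{A_d} \to \Mod_{K_d}$, show via the universal property of the Serre quotient that it descends to an exact functor $\overline{F} \colon \Mod^\gen_d \to \Mod_{K_d}$, and exhibit an explicit quasi-inverse built from $\overline{S}_d$. Concretely, for $M \in \Mod_{A_d}$, I would set $F(M) = M \otimes_{\abs{A_d}} \abs{K_d}$ equipped with the induced $\P$-action. The image of $M$ in $F(M)$ consists of polynomial elements and spans $F(M)$ over $\abs{K_d}$, so $F(M)$ is a $K_d$-module. Exactness of $F$ follows from flatness of $\abs{K_d}$ over $\abs{A_d}$, and $F$ kills torsion modules since every nonzero element of $\abs{A_d}$ becomes a unit in $\abs{K_d}$; hence $F$ factors through $\overline{F}$.

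For the quasi-inverse, I would define $G \colon \Mod_{K_d} \to \Mod^\gen_d$ by $G(N) = \overline{T}_d(\overline{S}_d(N)) = \overline{T}_d(N^\pol)$. To check $\overline{F} \circ G \cong \id$, note that $\overline{F}(G(N))$ is represented by $N^\pol \otimes_{\abs{A_d}} \abs{K_d}$, and the natural map to $N$ is surjective by the spanning condition in the definition of $K_d$-module, and injective because $N^\pol$ is $\abs{A_d}$-torsion-free as a subspace of the $\abs{K_d}$-vector space $N$. For $G \circ \overline{F} \cong \id$, I would show that the canonical inclusion $M \hookrightarrow (M \otimes_{\abs{A_d}} \abs{K_d})^\pol$ has torsion cokernel: every element of $F(M)$ has the form $m/a$ with $m \in M$ and $a \in \abs{A_d} \setminus \{0\}$, and its class modulo $M$ is annihilated by $a$. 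Thus the inclusion becomes an isomorphism in $\Mod^\gen_d$, completing the equivalence.

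The main obstacle I anticipate is verifying that $N^\pol$ actually lies in $\Mod_{A_d}$ in the sense of \S\ref{subsec:prelim Amods}, i.e., forms a polynomial $\P$-representation closed under the $\abs{A_d}$-action rather than merely a subspace of polynomial elements in $N$. Stability under $\P$ is immediate; stability under $\abs{A_d}$ follows because for polynomial $v$ generating a polynomial subrepresentation $V \subset N$ and $a \in \abs{A_d}$, the $\P$-orbit of $av$ is contained in the image of the multiplication map $A_d \otimes V \to N$, which is a quotient of a tensor product of polynomial $\P$-representations and hence polynomial. That $N^\pol$ is itself polynomial, and not merely a union of polynomial subrepresentations, follows from the fact that a sum of polynomial subrepresentations inside a common ambient $\P$-module is again polynomial, realized as a quotient of their direct sum.
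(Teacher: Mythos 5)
Your argument is correct and takes essentially the same route as the paper: the paper defines the same functor $M\mapsto K_d\otimes_{A_d}M$, observes it is exact and kills torsion so that it descends to $\Mod^\gen_d$, and then cites the reference for the verification that $\overline{T}_d\overline{S}_d$ is a quasi-inverse, which is precisely the denominator-clearing and torsion-cokernel argument you carry out by hand. One small correction: the natural map $M\to (M\otimes_{\abs{A_d}}\abs{K_d})^{\pol}$ need not be injective (its kernel is the torsion submodule of $M$), but since both kernel and cokernel are torsion it still becomes an isomorphism after applying $\overline{T}_d$, so your conclusion stands.
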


\begin{proof}
    The functor $\Mod_{A_d}\to\Mod_{K_d}$ given by $M\mapsto K_d\otimes_{A_d} M$ is exact and kills torsion $A_d$-modules. Thus, we have an induced exact functor $F:\Mod_d^\gen\to\Mod_{K_d}$. Notice that $\overline{S}_d$ is right adjoint to $F\overline{T}_d$. Then, the proof that $F$ is an equivalence of categories, with right quasi-inverse $\overline{T}_d\overline{S}_d$, follows the proofs of \cite[Propositions 2.7 and 2.8]{NSSnoethdeg2}.
    \end{proof}

For the remainder of the paper, we identify $\Mod_d^\gen\cong\Mod_{K_d}$, and call $\overline{S}_d$ the \emph{section functor}; it is right adjoint to the localization functor $\overline{T}_d:\Mod_{A_d}\to\Mod_d^\gen$. The following result will be important in \S\ref{sec:ModA} to study the bounded derived category of $\bfA$-modules.

\begin{proposition}\label{prop:saturated objects}
    For any finite length polynomial $\P$-representation $M$, the natural map $A_d\otimes M\to \overline{S}_d(\overline{T}_d(A_d\otimes M))$ is an isomorphism.
\end{proposition}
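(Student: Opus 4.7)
The plan is to verify injectivity and surjectivity of the natural map $\eta \colon A_d \otimes M \to \overline{S}_d(\overline{T}_d(A_d \otimes M))$ separately. Injectivity is straightforward: since $A_d$ is a domain, the $A_d$-module $A_d \otimes_\C M$ is free, hence torsion-free. By Lemma~\ref{lem:K_d mod equiv gen}, the saturation $\overline{S}_d(\overline{T}_d(A_d \otimes M))$ identifies with the polynomial subspace $(K_d \otimes_\C M)^\pol$ inside the $K_d$-module $K_d \otimes_{A_d}(A_d \otimes_\C M) \cong K_d \otimes_\C M$, and $\eta$ is the restriction of the natural injection $A_d \otimes M \hookrightarrow K_d \otimes M$.

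For surjectivity, note that $\coker(\eta)$ is a torsion $A_d$-module since $\overline{T}_d(\eta)$ is the identity (by the triangle identities for the adjunction $(\overline{T}_d, \overline{S}_d)$, together with $\overline{T}_d \overline{S}_d \cong \id$). To show this cokernel vanishes, I would reduce to the case where $M$ is injective. Since $M$ has finite length, it sits in a short exact sequence $0 \to M \to I \to C \to 0$ with $I$ an injective finite-length $\P$-representation (by \S\ref{subsec:prelim Rep(P)}) and $C$ also of finite length. Applying the exact functor $A_d \otimes_\C -$ and the left exact functor $\overline{S}_d \overline{T}_d$ yields a morphism of exact sequences, and a snake lemma argument shows: if $\eta_I$ is an isomorphism and $\eta_C$ is injective (which holds by the injectivity step applied to $C$), then $\eta_M$ is an isomorphism.

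For the injective case $V = T_{\underline{\lambda}} = \bigotimes_i \bfS_{\lambda^i}(\bfV/\bfV_{n-i})$, I would use the specialization functor $\mathscr{S}_a$. Any element $v \in (K_d \otimes V)^\pol$ generates a finite length polynomial $\P$-subrepresentation and is therefore $G_a$-invariant for $a \gg 0$, so $v \in \mathscr{S}_a(K_d) \otimes \mathscr{S}_a(V)$ using that $\mathscr{S}_a$ is a tensor functor. At this finite level, $\mathscr{S}_a(A_d)$ is a polynomial ring in finitely many variables and $\mathscr{S}_a(A_d) \otimes \mathscr{S}_a(V)$ is a free module over it; a weight analysis under the Levi subgroup $\bfL$, using that polynomial $\P$-representations have only non-negative $\bfL$-weights while $K_d$ contains elements of negative weight, combined with classical commutative algebra should force $v$ to lie in $\mathscr{S}_a(A_d) \otimes \mathscr{S}_a(V) \subseteq A_d \otimes V$.

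The hard part will be making the weight analysis rigorous in the injective case: since $T_{\underline{\lambda}}$ is typically infinite-dimensional and contains vectors of every sufficiently large polynomial $\bfL$-weight, one cannot simply rule out negative-weight factors from $K_d$ on a summand-by-summand basis. The argument must use that $v$ sits in a finite length polynomial $\P$-subrepresentation, whose $\bfL$-character is bounded, together with the specific Schur functor structure of $T_{\underline{\lambda}}$ after specialization.
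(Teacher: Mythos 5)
Your reduction steps are fine but they stop short of the actual content of the proposition. The identification of $\overline{S}_d\overline{T}_d(A_d\otimes M)$ with the polynomial elements of $K_d\otimes M$, the injectivity via torsion-freeness, and the snake-lemma reduction to injective $M$ are all correct (though the reduction is unnecessary: nothing in the real argument uses injectivity of $M$, and the paper works with an arbitrary finite length $M$ directly). The heart of the matter is the step you explicitly leave open: showing that a polynomial element $m=\sum_i \frac{f_i}{g}\otimes m_i$ of $K_d\otimes M$ must have constant denominator. The weight heuristic you propose cannot close this, because non-negativity of $\bfL$-weights does not rule out denominators: an element such as $\frac{x_{i1}}{x_{i2}}\otimes e_{i2}$ has perfectly polynomial total weight while having a genuine denominator, so no analysis of weights alone, however careful about the Schur functor structure of $T_{\underline{\lambda}}$, can distinguish elements of $A_d\otimes V$ from other weight vectors of $K_d\otimes V$. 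What is needed is the full strength of polynomiality of the $\P$-representation generated by $m$, not just positivity of its character.

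The paper's mechanism is different and is the missing idea. Writing $m$ with $\gcd(g,f_1,\dots,f_p)=1$ and choosing $a\gg 0$ with $g,f_i\in\mathscr{S}_a(A_d)$, one uses that $m$ generates a \emph{finite-dimensional} polynomial representation of the Levi $L_{a+1}$ of $P_{a+1}$: all elements of this representation can be written over a single common denominator, which forces the $L_{a+1}$-orbit of the hypersurface $\{g=0\}$ in $\Spec(\mathscr{S}_a(A_d))$ to be finite. Connectedness of $L_{a+1}$ then makes it fix each irreducible component, so $g$ is a semi-invariant, i.e.\ $L_{a+1}$ acts on $g$ through a product of powers of determinants. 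Finally, since $g\in\mathscr{S}_a(A_d)$ it is invariant under the block matrices that are the identity in the first $a$ coordinates and an arbitrary scalar in coordinate $a+1$, which kills the determinantal character; hence $\bfL$ acts trivially on $g$, and $(A_d)^{\bfL}=\C$ gives $g\in\C$, so $m\in A_d\otimes M$. Without an argument of this kind (orbit finiteness plus semi-invariance, or some substitute exploiting that $m$ generates a polynomial representation rather than merely having polynomial weights), your proof does not go through.
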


\begin{proof}
    We show that if $m\in K_d\otimes M$ is a polynomial element, then $m\in A_d\otimes M$. We can write $m$ as
    \[m=\sum_{i=1}^p\frac{f_i}{g}\otimes m_i,\]
    where $f_i,g\in A_d$ with $\gcd(g,f_1,\ldots,f_p)=1$, and $m_i\in M$ linear independent over $\C$. Let $a\gg 0$ be such that $g,f_1,\ldots,f_p\in \mathscr{S}_a(A_d)$. Let $L_{a+1}=\prod_{i=1}^n\GL_{a+1}$ denote the Levi subgroup of $P_{a+1}$. We have that $m$ generates a finite-dimensional polynomial representation of $L_{a+1}$, so let $\{e_1,\ldots,e_r\}$ be a basis, where
    \[e_j=\sum_{k=1}^{n_j}\frac{f_{jk}}{g_j}\otimes m_{jk},\qquad f_{jk},g_j\in A_d,m_{jk}\in M.\]
    Thus, every element of this $L_{a+1}$-representation can be written with common denominator $g'=g_1\cdots g_r$. The hypersurface defined by $g'$ contains the $L_{a+1}$-orbit of the hypersurface defined by $g$, and so this orbit is finite; otherwise, the orbit would be a dense open subset of $\Spec(\mathscr{S}_a(A_d))$ and could not be contained in the hypersurface defined by $g'$. Furthermore, $L_{a+1}$ fixes each irreducible component of the hypersurface $g=0$ as $L_{a+1}$ is connected and $1\in L_{a+1}$.
    The irreducible components are therefore fixed under the $L_{a+1}$-action, and so $L_{a+1}$ acts by a scalar in $\C$ on $g$. In particular, the polynomial $L_{a+1}$-representation generated by $g$ is one-dimensional. Then, the action of $L_{a+1}$ on $g$ is given by products of powers of determinants: if $A\in L_{a+1}$ is a block diagonal matrix given by $(A_1,\ldots,A_n)$ where each $A_i$ is of size $(a+1)\times(a+1)$, then
    \[A\cdot g=\prod_{i=1}^n(\det A_i)^{d_i}g,\qquad d_i\geq 0.\]
    Recall that $g\in \mathscr{S}_a(A_d)$, and is thus invariant under the action of matrices of $L_{a+1}$ where each block along the diagonal is of the form
    \[\begin{pmatrix} \id_a & 0 \\ 0 & \alpha\end{pmatrix},\qquad \alpha\in\C.\]
    Since $g$ is nonzero, this implies that $L_{a+1}$ must actually act trivially on $g$, and so $\bfL$ also acts trivially on $g$. Thus, $g\in \C$, as $(A_d)^\bfL=\C$, and so $m\in A_d\otimes M$, as desired.
\end{proof}

\section{Structure of $\bfA$-modules}\label{sec:ModA}

\subsection{Set-up and notation}

For $d=0,\ldots,n$ let $\Mod_{\bfA,\leq d}$ denote the full subcategory of $\Mod_\bfA$ spanned by $\bfA$-modules locally annihilated by a power of $\frakp_d$, i.e., supported on $V(\frakp_{d})$. This gives a chain of Serre subcategories
\[\Mod_{\bfA,\leq 0}\subset\Mod_{\bfA,\leq 1}\subset\cdots\subset\Mod_{\bfA,\leq n}=\Mod_\bfA.\]
We have Serre quotient categories
\[\Mod_{\bfA,> d}=\Mod_\bfA/\Mod_{\bfA,\leq d},\qquad\Mod_{\bfA,d}=\Mod_{\bfA,\leq d}/\Mod_{\bfA,\leq d-1}.\]
We let $\Mod_{\bfA,d}^\fg$ denote the subcategory of $\Mod_{\bfA,d}$ on finitely generated objects. 

For each $d=0,\ldots,n$, we have the following functors associated to these categories:
\begin{itemize}
    \item The localization functor $T_{>d}:\Mod_\bfA\to\Mod_{\bfA,> d}$.
    \item The localization functor $T_d:\Mod_{\bfA,\leq d}\to \Mod_{\bfA,d}$; this is the restriction of $T_{>d-1}$.
    \item The section functor $S_{>d}:\Mod_{\bfA,> d}\to\Mod_{\bfA}$, which is right adjoint to $T_{>d}$.
    \item The section functor $S_d:\Mod_{\bfA,d}\to\Mod_{\bfA,\leq d}$, which is right adjoint to $T_d$; by uniqueness of right adjoint, it agrees with the restriction of $S_{>d-1}$.
    \item The left-exact saturation functor $\Sigma_{>d}=S_{>d}\circ T_{>d}$. 
    \item The left-exact torsion functor $\Gamma_{\leq d}:\Mod_\bfA\to\Mod_{\bfA,\leq d}$, defined by taking the maximal submodule supported on $V(\frakp_{d})$.
\end{itemize}
For each $d=0,\ldots,n$, recall the following notation from the previous section:
\begin{itemize}
    \item The quotient ring $A_d=\bfA/\frakp_d$ and the category $\Mod_{A_d}$ of $A_d$-modules.
    \item The quotient category $\Mod_d^\gen$ of $\Mod_{A_d}$ by the subcategory of torsion $A_d$-modules, with subcategory $(\Mod_d^\gen)^\fg$ on finitely generated objects (let $\Mod_0^\gen=\Mod_{A_0}$).
    \item The localization functor $\overline{T}_d:\Mod_{A_d}\to\Mod^\gen_d$ (for $d=0$, this is the identity functor).
    \item The section functor $\overline{S}_d:\Mod_d^\gen\to\Mod_{A_d}$ (for $d=0$, this is the identity functor).
\end{itemize}

Let $\thrmD(\bfA)$ denote the derived category of $\bfA$-modules. We have the following associated triangulated categories:
\begin{itemize}
    \item The bounded derived category $\thrmD^b_\fg(\bfA)$ of objects with finitely generated cohomology.
    \item The full subcategory $\thrmD(\bfA)_{\leq d}$ on objects $M$ for which $\thrmR\Sigma_{>d}(M)=0$.
    \item The full subcategory $\thrmD(\bfA)_{>d}$ on objects $M$ for which $\thrmR\Gamma_{\leq d}(M)=0$.
    \item The category $\thrmD(\bfA)_d=\thrmD(\bfA)_{\leq d}\cap\thrmD(\bfA)_{>d}$.
\end{itemize}

\subsection{Decomposition of $\Mod_\bfA$}

The filtration of $\Mod_\bfA$ by the subcategories $\Mod_{\bfA,\leq d}$ provides a way to decompose $\Mod_\bfA$ via the results in \cite[\S4.2]{SSglII}. The key hypothesis needed for these results is the \emph{(Inj) hypothesis}, which is that injective objects in $\Mod_{\bfA,\leq d}$ remain injective in $\Mod_\bfA$. We now verify that this hypothesis holds.

\begin{lemma}
Injective objects in $\Mod_{\bfA,\leq d}$ remain injective in $\Mod_{\bfA}$.
\end{lemma}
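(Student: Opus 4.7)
The plan is to derive the lemma from the assertion that $\Mod_{\bfA,\leq d}$ is closed under essential extensions in $\Mod_\bfA$. Granting this closure, if $I$ is injective in $\Mod_{\bfA,\leq d}$ then its injective hull $E=E_\bfA(I)$ computed in $\Mod_\bfA$ is an essential extension of $I$, hence lies in $\Mod_{\bfA,\leq d}$; but $I$ admits no proper essential extension inside that subcategory (being injective there), so $I=E$, and therefore $I$ is injective in $\Mod_\bfA$.

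To prove the essential-extension closure, suppose $M\subseteq E$ is essential in $\Mod_\bfA$ with $M\in\Mod_{\bfA,\leq d}$ and fix an arbitrary $e\in E$; it suffices to show $e$ is $\frakp_d^\infty$-torsion. Let $N_e\subseteq E$ be the finitely generated $\bfA$-submodule generated by the $\P$-orbit of $e$. Then $M\cap N_e$ is essential in $N_e$ (essentiality passes to intersections with arbitrary submodules) and lies in $M$, so is supported on $V(\frakp_d)$. This reduces the claim to the finitely generated setting: any finitely generated $\bfA$-module $N$ possessing an essential submodule supported on $V(\frakp_d)$ is itself supported on $V(\frakp_d)$.

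For the finitely generated case I would adapt the classical associated-prime argument, exploiting the chain structure $0=\frakp_n\subsetneq\cdots\subsetneq\frakp_0$ of $\P$-primes from Theorem~\ref{thm:A ideals} together with the noetherianity of finitely generated $\bfA$-modules established in Section~\ref{subsec:prelim Amods}. By noetherianity, every nonzero such $N$ contains a nonzero $\P$-submodule $W$ whose $\P$-equivariant annihilator $\Ann_\bfA(W)$ is maximal among $\P$-equivariant annihilators of nonzero $\P$-submodules of $W$; an elementary argument then forces $\Ann_\bfA(W)$ to be a $\P$-prime $\frakp_i$, and forces $W$ to be uniform in the sense that every nonzero $\P$-subsubmodule has the same annihilator $\frakp_i$. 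Calling such a $\frakp_i$ a $\P$-associated prime of $N$, one checks that $N\in\Mod_{\bfA,\leq d}$ if and only if every $\P$-associated prime of $N$ contains $\frakp_d$, and that for an essential submodule $X\subseteq N$ the intersection $X\cap W$ is a nonzero $\P$-sub of the uniform $W$, hence has the same annihilator, so $X$ and $N$ share their $\P$-associated primes. Applied to $X=M\cap N_e$, all $\P$-associated primes of $X$ contain $\frakp_d$, hence the same holds for $N=N_e$, proving $N_e\in\Mod_{\bfA,\leq d}$ and completing the argument.

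The main obstacle is establishing the equivariant associated-prime framework, especially the preservation of $\P$-associated primes under essential extensions. The classical proof multiplies a generator with given annihilator by a ring element avoiding that prime to land inside the essential submodule while preserving the annihilator; here the analogous step must respect the $\P$-action. The chain structure of $\P$-primes (Theorem~\ref{thm:A ideals}) simplifies the primality manipulations considerably, and the noetherianity of finitely generated $\bfA$-modules (Section~\ref{subsec:prelim Amods}) ensures the framework is well-posed in our equivariant setting.
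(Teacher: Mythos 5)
Your overall architecture is sound and genuinely different from the paper's: closure of $\Mod_{\bfA,\leq d}$ under essential extensions does imply the lemma, the reduction to finitely generated modules is correct, and the equivariant formalism of maximal annihilators (existence via the ascending chain condition on $\P$-ideals, primality of a maximal annihilator, uniformity of the witnessing submodule, and the inclusion of $\P$-associated primes of $N$ into those of an essential submodule) does carry over essentially verbatim. In fact the step you flag as the main obstacle---preservation of $\P$-associated primes under essential extensions---is the easy part once you phrase associated primes via uniform submodules: if $W\subseteq N$ is uniform with annihilator $\frakp_j$ and $X$ is essential, then $X\cap W$ is a uniform witness inside $X$. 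The genuine gap is the sentence ``one checks that $N\in\Mod_{\bfA,\leq d}$ if and only if every $\P$-associated prime of $N$ contains $\frakp_d$.'' The ``only if'' direction is immediate, but the ``if'' direction is precisely the equivariant analogue of the classical fact that every prime in the support contains an associated prime (equivalently, that the minimal prime over $\Ann(N)$, here $\sqrt{\Ann(N)}$, which by Theorem~\ref{thm:A ideals} is one of the $\frakp_i$, is associated). The classical proofs of that fact localize at a prime or use primary decomposition, neither of which is available off the shelf in the $\P$-equivariant category, and your argument delivers only the hypothesis of this implication: all $\P$-associated primes of $N_e$ contain $\frakp_d$. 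That alone does not force $\frakp_d^kN_e=0$, and the obvious repair attempts fail: a prime filtration of $N_e$ involves associated primes of successive quotients, and passing to $N_e/\Gamma_{\leq d}(N_e)$ produces an associated prime of a quotient---but $\P$-associated primes of quotients need not be associated primes of the module, as already $\C[x]$ versus $\C[x]/(x)$ shows (the quotient has associated prime $(x)$ while the module has only $(0)$).

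The missing implication is very likely true, but proving it requires a substantive new input, most naturally by transferring the classical theory of associated primes and supports through the specialization functors $\mathscr{S}_a$ with $a>\ell(N)$ together with the boundedness argument (nonvanishing and strict inclusions of bounded modules survive specialization), or by routing through the localization-type machinery of the generic categories. For comparison, the paper's proof is much shorter and avoids associated primes entirely: it quotes \cite[Corollary 4.19]{SSglII} to reduce the (Inj) property to an equivariant Artin--Rees lemma for each $\frakp_d$, and verifies Artin--Rees by applying $\mathscr{S}_a$ with $a>\ell(M)$, invoking the classical Artin--Rees lemma after specialization, and using boundedness to pull the equality back. If you want to complete your route, the same specialization-plus-boundedness technique is the natural tool for your missing implication, but at that point the Artin--Rees reduction is the more economical path.
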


\begin{proof}
By \cite[Corollary 4.19]{SSglII}, it suffices to show that the Artin--Rees lemma holds for each $\P$-prime ideal $\frakp_d$ ideal, i.e., that if $M$ is a finitely generated $\bfA$-module and $N\subset M$ is a submodule, then there exists $r$ such that $\frakp_d^m M\cap N=\frakp_d^{m-r}(\frakp_d^r M\cap N)$ for all $m\geq r$.

    Let $a>\ell(M)$. By the classical Artin--Rees lemma, there exists an $r$ such that for all $m\geq r$, we have the desired equality after applying the specialization functor $\mathscr{S}_a(-)$:
    \[\mathscr{S}_a(\frakp_d^mM\cap N)=\mathscr{S}_a(\frakp_d^{m-r}(\frakp_d^rM\cap N)),\qquad m\geq r.\]
    Then, Artin--Rees holds before specialization as well using the same $r$. This is because $a>\ell(M)\geq \ell(\frakp_d^m M\cap N)$, and so if there were a strict containment $\frakp_d^m M\cap N\supsetneq\frakp_d^{m-r}(\frakp_d^rM\cap N)$ for some $m\geq r$, then one would have strict containment after specialization as well.
\end{proof}

This lemma implies the results in \cite[\S4.1-2]{SSglII}. In particular, we have the following.

\begin{lemma}\label{lem:der sat}
    An $\bfA$-module $M$ is derived saturated, i.e., the natural map $M\to\thrmR\Sigma_{>d}(M)$ is an isomorphism, if and only if $\thrmR\Hom_\bfA(N,M)=0$ for all $N\in\Mod_{\bfA,\leq d}$.
\end{lemma}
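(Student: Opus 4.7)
The plan is to exploit the (Inj) hypothesis verified in the previous lemma, which guarantees that $S_{>d}$ has a well-behaved derived functor $\thrmR\Sigma_{>d}=\thrmR S_{>d}\circ T_{>d}$ and that $\Gamma_{\leq d}$ has a derived functor $\thrmR\Gamma_{\leq d}$ fitting into the standard derived localization triangle
\[
\thrmR\Gamma_{\leq d}(M)\longrightarrow M\longrightarrow \thrmR\Sigma_{>d}(M)\longrightarrow \thrmR\Gamma_{\leq d}(M)[1]
\]
for every $M\in\thrmD(\bfA)$ (this is the derived counterpart of the short exact sequence $0\to\Gamma_{\leq d}(M)\to M\to \Sigma_{>d}(M)$, obtained by taking an injective resolution of $M$ and applying the triangle to each term; the (Inj) hypothesis ensures injective resolutions remain injective for the restriction to $\Mod_{\bfA,\leq d}$). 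Consequently, the condition that $M\to \thrmR\Sigma_{>d}(M)$ is an isomorphism in $\thrmD(\bfA)$ is equivalent to $\thrmR\Gamma_{\leq d}(M)=0$.

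For the forward direction, I would assume $M\cong \thrmR\Sigma_{>d}(M)=\thrmR S_{>d}(T_{>d}(M))$ and invoke derived adjunction: for any $N\in\Mod_{\bfA,\leq d}$,
\[
\thrmR\Hom_{\bfA}(N,M)=\thrmR\Hom_{\bfA}(N,\thrmR S_{>d}(T_{>d}(M)))=\thrmR\Hom_{\Mod_{\bfA,>d}}(T_{>d}(N),T_{>d}(M)).
\]
Since $N$ is (locally) annihilated by a power of $\frakp_d$, we have $T_{>d}(N)=0$, so the right-hand side vanishes.

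For the reverse direction, I would use that the inclusion $\iota:\Mod_{\bfA,\leq d}\hookrightarrow\Mod_\bfA$ has right adjoint $\Gamma_{\leq d}$, and that under (Inj) this passes to a derived adjunction $\iota\dashv \thrmR\Gamma_{\leq d}$ between the corresponding (unbounded) derived categories. If $\thrmR\Hom_\bfA(N,M)=0$ for all $N\in\Mod_{\bfA,\leq d}$, then in particular
\[
\thrmR\Hom_{\Mod_{\bfA,\leq d}}(N,\thrmR\Gamma_{\leq d}(M))=\thrmR\Hom_{\bfA}(\iota(N),M)=0
\]
for all such $N$. Testing against the cohomology modules $H^i(\thrmR\Gamma_{\leq d}(M))$ (which lie in $\Mod_{\bfA,\leq d}$) — concretely, if some $H^i(\thrmR\Gamma_{\leq d}(M))$ were nonzero, one could find a nonzero map from an object of $\Mod_{\bfA,\leq d}$ into $\thrmR\Gamma_{\leq d}(M)$ and pull back to a nonzero class in $\thrmR\Hom_\bfA(N,M)$ — forces $\thrmR\Gamma_{\leq d}(M)=0$, and the triangle above finishes the argument.

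The main obstacle is justifying the derived adjunctions cleanly at the unbounded level (the localization triangle and the derived form of $\iota\dashv \Gamma_{\leq d}$); however, both are precisely the consequences of (Inj) that are spelled out in \cite[\S4.1-2]{SSglII}, which the excerpt already invokes, so I would simply cite those results rather than reprove them.
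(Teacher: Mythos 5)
Your proposal is correct, and it matches the paper's approach: the paper itself gives no separate argument for this lemma, but simply deduces it from \cite[\S4.1-2]{SSglII} once the (Inj) hypothesis is verified, which is exactly the standard localization-triangle/derived-adjunction argument you reconstruct (forward direction via $T_{>d}(N)=0$ and the adjunction $T_{>d}\dashv\thrmR S_{>d}$, reverse direction via $\thrmR\Gamma_{\leq d}(M)=0$ and testing against its cohomology). Deferring the technical unbounded derived-adjunction points to the cited results is precisely what the paper does, so there is no gap.
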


\begin{lemma}\label{lem:inj bij}
The functors $S_{>d}$ and $T_{>d}$ induce bijections among injectives of $\Mod_{\bfA,>d}$ and injectives $I$ of $\Mod_{\bfA}$ for which $\Gamma_{\leq d}(I)=0$.
\end{lemma}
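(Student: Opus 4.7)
My approach is to show that $T_{>d}$ and $S_{>d}$ are mutually inverse bijections on the indicated classes of injectives, leveraging the adjunction $T_{>d} \dashv S_{>d}$ together with the derived saturation criterion of Lemma~\ref{lem:der sat}. I would verify four properties:
(i) $S_{>d}$ carries injectives of $\Mod_{\bfA,>d}$ to injectives of $\Mod_\bfA$;
(ii) $\Gamma_{\leq d}(S_{>d}(J))=0$ for any $J$;
(iii) $T_{>d}$ carries injectives $I$ of $\Mod_\bfA$ with $\Gamma_{\leq d}(I)=0$ to injectives of $\Mod_{\bfA,>d}$; and
(iv) the unit $I \to S_{>d}(T_{>d}(I))$ and counit $T_{>d}(S_{>d}(J)) \to J$ are isomorphisms in the relevant cases.

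For (i) and (ii), since $T_{>d}$ is exact and left adjoint to $S_{>d}$, the functor $\Hom_\bfA(-,S_{>d}(J)) \cong \Hom_{\Mod_{\bfA,>d}}(T_{>d}(-),J)$ is a composition of exact functors whenever $J$ is injective, so $S_{>d}(J)$ is injective. For (ii), $\Hom_\bfA(N,S_{>d}(J)) = \Hom_{\Mod_{\bfA,>d}}(T_{>d}(N),J) = 0$ for any $N \in \Mod_{\bfA,\leq d}$ since $T_{>d}$ kills that subcategory, and applying this to $N = \Gamma_{\leq d}(S_{>d}(J))$ forces it to vanish. The counit isomorphism in (iv) is the standard fact that $S_{>d}$ is fully faithful onto its image in $\Mod_\bfA$.

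The nontrivial direction is (iii) together with the unit isomorphism in (iv). For $I$ injective with $\Gamma_{\leq d}(I)=0$, I would first show $I$ is derived saturated: for any $N \in \Mod_{\bfA,\leq d}$, $\Hom_\bfA(N,I)=0$ because any map $N \to I$ factors through $\Gamma_{\leq d}(I)=0$, while $\Ext^i_\bfA(N,I)=0$ for $i>0$ because $I$ is injective, so $\thrmR\Hom_\bfA(N,I)=0$. By Lemma~\ref{lem:der sat}, $I \cong \thrmR\Sigma_{>d}(I)$, and taking $H^0$ gives the unit isomorphism $I \cong S_{>d}(T_{>d}(I))$. To prove (iii), given a monomorphism $M \hookrightarrow N$ in $\Mod_{\bfA,>d}$ and a morphism $M \to T_{>d}(I)$, I would apply the left-exact $S_{>d}$ to obtain a mono $S_{>d}(M) \hookrightarrow S_{>d}(N)$ and a map to $S_{>d}(T_{>d}(I)) = I$; extend using injectivity of $I$ in $\Mod_\bfA$, then apply the exact $T_{>d}$ and the counit isomorphism $T_{>d}(S_{>d}(N)) \cong N$ to obtain the desired extension $N \to T_{>d}(I)$.

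The main obstacle is establishing the derived saturation of $I$, which depends on Lemma~\ref{lem:der sat} (and thus ultimately on the (Inj) hypothesis verified earlier); once this is in hand, every remaining step is a formal consequence of the Serre quotient adjunction and the exactness of $T_{>d}$.
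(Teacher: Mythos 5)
Your argument is correct, and it is essentially self-contained where the paper is not: the paper deduces this lemma (together with Lemma~\ref{lem:der sat}) purely by citing \cite[\S 4.1--2]{SSglII} after verifying the (Inj) hypothesis via Artin--Rees, whereas you reproduce the underlying Gabriel localization argument explicitly. All four of your steps check out: (i) and (ii) are the formal adjunction facts; the extension argument in (iii) restricts correctly to $M$ by naturality of the counit and the triangle identity; and the unit isomorphism follows once $I$ is saturated. Two small comparative remarks. First, your detour through the derived criterion is heavier than necessary: for $I$ injective with $\Gamma_{\leq d}(I)=0$ you have $\Hom_\bfA(N,I)=0$ and $\Ext^1_\bfA(N,I)=0$ for all $N\in\Mod_{\bfA,\leq d}$, and this underived vanishing already characterizes saturation ($I\cong S_{>d}T_{>d}(I)$) in Serre-quotient theory, so Lemma~\ref{lem:der sat} is not strictly needed. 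Second, and relatedly, your proof makes visible that this particular bijection is pure localization theory for a Grothendieck category with a localizing subcategory and does not actually require the (Inj) hypothesis --- whereas routing through Lemma~\ref{lem:der sat} as imported from \cite{SSglII} makes it appear to depend on it; that dependence is genuine for other consequences drawn in that section (e.g.\ the derived-category statements), but not for this one. So your proof is valid, and arguably isolates the formal content more cleanly than the citation does.
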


\begin{remark}\label{rmk:inj A_d}
    The analogous results for $\Mod_{A_d}$ and the subcategory $\Mod_d^\tors$ hold, i.e., $\Mod_d^\tors\subset\Mod_{A_d}$ satisfy the (Inj) hypothesis, and so the results in \cite{SSglII} also apply.
\end{remark}

\subsection{Comparison with generic $\bfA/\frakp_d$-modules}

Recall the categories $\Mod_{\bfA/\frakp_d}=\Mod_{A_{d}}$ of $\bfA$-modules annihilated by $\frakp_d$, and the generic categories $\Mod^\gen_{\bfA/\frakp_d}=\Mod^\gen_d$ studied in \S\ref{sec:gen}. The following results are essentially the same as \cite[Propositions 6.1 and 6.2]{SSglII}, but we provide some added detail and use our notation.

\begin{lemma}\label{lem:gen A_d subcat}
    $\Mod_d^\gen$ is equivalent to the subcategory $\Mod_{A_d,d}\subset\Mod_{\bfA,d}$ on objects of the form $T_{d}(M)$, where $M$ is an $\bfA$-module such that $\frakp_dM$ is supported on $V(\frakp_{d-1})$.
\end{lemma}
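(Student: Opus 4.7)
The plan is to construct a natural functor $\overline{F}\colon\Mod_d^\gen\to\Mod_{\bfA,d}$ through which the inclusion $\Mod_{A_d}\hookrightarrow\Mod_{\bfA,\leq d}$ followed by $T_d$ factors, and to show it is fully faithful with essential image equal to $\Mod_{A_d,d}$.

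\textbf{Step 1 (factorization).} Every $A_d$-module lies in $\Mod_{\bfA,\leq d}$ since it is annihilated by $\frakp_d$, so $F:=T_d\circ(\Mod_{A_d}\hookrightarrow\Mod_{\bfA,\leq d})$ makes sense. I will show that $F$ sends an $A_d$-module to $0$ in $\Mod_{\bfA,d}$ precisely when it is torsion in the sense of $\Mod_d^\tors$. Indeed, $F(M)=0$ iff $M\in\Mod_{\bfA,\leq d-1}$, i.e.\ $M$ is locally annihilated by a power of $\frakp_{d-1}$. For $M$ finitely generated this is equivalent to being annihilated by $\frakp_{d-1}^k$ for some $k$, and since $\frakp_{d-1}/\frakp_d$ is a nonzero $\P$-ideal of $A_d$, Proposition~\ref{prop:torsion} identifies this with torsion. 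The general case follows because any $A_d$-module is the filtered colimit of its finitely generated submodules and torsion is closed under such colimits. By the universal property of the Serre quotient, $F$ factors uniquely through $\overline{T}_d$ to give $\overline{F}\colon\Mod_d^\gen\to\Mod_{\bfA,d}$.

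\textbf{Step 2 (full faithfulness).} Let $M,N\in\Mod_{A_d}$. Using the standard colimit description of Hom in a Serre quotient,
\[
\Hom_{\Mod_d^\gen}(M,N)=\varinjlim\Hom_{A_d}(M',N/N'),
\]
with the colimit taken over $A_d$-submodules $M'\subset M$ with $M/M'\in\Mod_d^\tors$ and $N'\subset N$ with $N'\in\Mod_d^\tors$, while
\[
\Hom_{\Mod_{\bfA,d}}(T_d(M),T_d(N))=\varinjlim\Hom_{\bfA}(M',N/N'),
\]
indexed by $\bfA$-submodules with $M/M',N'\in\Mod_{\bfA,\leq d-1}$. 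Since $M,N$ are already $A_d$-modules, their $A_d$- and $\bfA$-submodules coincide, $\bfA$-linear maps between them are automatically $A_d$-linear, and by Step~1 the two indexing conditions agree. Hence the two filtered systems are canonically identified, which gives that $\overline{F}$ is fully faithful.

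\textbf{Step 3 (essential image).} If $M$ is an $A_d$-module, then $\frakp_d M=0$, trivially in $\Mod_{\bfA,\leq d-1}$, so the image of $\overline{F}$ lies in $\Mod_{A_d,d}$. Conversely, suppose $X=T_d(M)$ for some $M\in\Mod_{\bfA,\leq d}$ with $\frakp_d M\in\Mod_{\bfA,\leq d-1}$. Then $\overline{M}:=M/\frakp_d M$ is an $A_d$-module, and because $T_d$ kills $\frakp_d M$ we get $X=T_d(M)=T_d(\overline{M})=\overline{F}(\overline{T}_d(\overline{M}))$. Hence every object of $\Mod_{A_d,d}$ lies in the essential image, completing the equivalence.

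The only substantive point is the identification in Step~1 of $\Mod_d^\tors$ with the class of $A_d$-modules supported on $V(\frakp_{d-1})$, which is handled by Proposition~\ref{prop:torsion} together with a colimit argument; the remaining work is careful bookkeeping of the colimit descriptions of morphisms in the two Serre quotients.
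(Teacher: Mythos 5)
Your proposal is correct and follows essentially the same route as the paper: the functor is the restriction of $T_d$ to $\Mod_{A_d}$, its kernel is identified with $\Mod_d^\tors$ (so it descends through $\overline{T}_d$), full faithfulness comes from matching the colimit descriptions of Hom in the two Serre quotients, and essential surjectivity uses $\overline{M}=M/\frakp_d M$. Your Step 1 is in fact a bit more explicit than the paper's proof about why ``torsion'' coincides with ``supported on $V(\frakp_{d-1})$'' for $A_d$-modules, but the argument is the same in substance.
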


\begin{proof}
    The functor $T_{d}:\Mod_{\bfA,\leq d}\to\Mod_{\bfA,\leq d-1}$ is exact. If $M\in\Mod_{A_d}\subset\Mod_{\bfA,\leq d}$, then $\frakp_d M=0$ and so it is supported on $V(\frakp_{d-1})$; thus, $T_d(M)\in\Mod_{A_d,d}$. Furthermore, $T_{d}$ kills objects of $\Mod_{A_d}$ supported on $V(\frakp_{d-1})$, i.e., $\Mod_d^\tors$. Thus, $T_{d}$ induces a functor $\mathscr{F}:\Mod^\gen_d\to \Mod_{A_d,d}$, which we will show is an equivalence of categories.

    We first show that $\mathscr{F}$ is essentially surjective by showing that the localization functor $T_{d}$ restricted to $\Mod_{A_d}$ is essentially surjective. Consider an object of the form $T_{d}(M)\in\Mod_{A_d,d}$, where $M\in\Mod_\bfA$ such that $\frakp_d M$ is supported on $V(\frakp_{d-1})$. Then we have the $A_d$-module $\overline{M}=M/\frakp_d M$, and $M\to\overline{M}$ is a surjection with kernel $\frakp_d M$ supported on $V(\frakp_{d-1})$. Thus, $T_{d}(\overline{M})\cong T_{d}(M)$ as objects in $\Mod_{\bfA,d}$, and so $\mathscr{F}$ is essentially surjective.

    Suppose $M,N\in\Mod_{A_d}$, and recall that $\overline{T}_d:\Mod_{A_d}\to\Mod^\gen_d$ denotes the localization functor. We have that
    \[\Hom_{\Mod_d^\gen}(\overline{T}_d(M),\overline{T}_d(N))\cong \Hom_{\Mod_{A_d,d}}(T_{d}(M),T_{d}(N)),\]
    as both are given by $\varinjlim\Hom_{A_d}(M',N')$, where the colimit is over $A_d$-modules $M',N'$ such that $M'\subset M$ is an inclusion, $N\to N'$ is a surjection, and both $M/M'$ and $\ker(N\to N')$ are in $\Mod_{d}^\tors$. Since $\mathscr{F}(\overline{T}_d(M))=T_{d}(M)$, this shows that $\mathscr{F}$ is fully faithful.
\end{proof}

Recall the section functors $\overline{S}_{d}:\Mod_d^\gen\to\Mod_{A_d}$ studied in \S\ref{subsec:section functors}, and let $\thrmR\overline{S}_{d}$ denote its right derived functor. We now compare $\thrmR\overline{S}_{d}$ and $\thrmR S_{d}$ as functors from the derived category of $\Mod_d^\gen\cong\Mod_{A_d,d}$. The former is computed by considering injective resolutions in $\Mod_d^\gen$, while the latter is computed by considering injective resolutions in $\Mod_{\bfA,d}$. Although injective objects in these categories are different, the following result says that the derived functors still agree; this is key for describing the derived category $\thrmD^b_\fg(\bfA)$.

\begin{proposition}\label{prop:der sec agree}
    Let $M\in\Mod_d^\gen$. Then $\thrmR\overline{S}_{d}(M)\cong\thrmR S_{d}(M)$ are isomorphic.
\end{proposition}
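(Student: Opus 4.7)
The plan is to exhibit a single complex that computes both $\thrmR\overline{S}_d(M)$ and $\thrmR S_d(M)$. By Corollary~\ref{cor:gen Amod}(3), every injective in $\Mod_d^\gen$ has the form $\overline{T}_d(A_d\otimes V)$ for some injective polynomial $\P$-representation $V$; pick an injective resolution $M\to J^\bullet$ in $\Mod_d^\gen$ with all $J^i$ of this form. By construction, $\overline{S}_d(J^\bullet)$ computes $\thrmR\overline{S}_d(M)$. The strategy is to show that, under the equivalence $\mathscr{F}:\Mod_d^\gen\xrightarrow{\sim}\Mod_{A_d,d}\subset\Mod_{\bfA,d}$ of Lemma~\ref{lem:gen A_d subcat}, this same complex also computes $\thrmR S_d(\mathscr{F}(M))$.

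First I would compare the underlying modules. Let $i_d:\Mod_{A_d}\hookrightarrow\Mod_{\bfA,\leq d}$ be the inclusion, with right adjoint $\Hom_\bfA(A_d,-)$. Chasing the identity $T_d\circ i_d=\mathscr{F}\circ\overline{T}_d$ through the three adjunctions $\overline{T}_d\dashv\overline{S}_d$, $T_d\dashv S_d$, and $i_d\dashv\Hom_\bfA(A_d,-)$ yields the canonical identification
\[i_d\,\overline{S}_d(N)=\Hom_\bfA(A_d,S_d(\mathscr{F}(N)))\quad\text{for all } N\in\Mod_d^\gen.\]
For $J=\overline{T}_d(A_d\otimes V)$ an injective, Proposition~\ref{prop:saturated objects}, extended from finite length subrepresentations of $V$ to $V$ itself via the cocontinuity of $\overline{T}_d$ and $\overline{S}_d$, gives $\overline{S}_d(J)=A_d\otimes V$; this will also recover $S_d(\mathscr{F}(J))$ provided there is no extra $\frakp_d$-nilpotent contribution.

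The main obstacle is upgrading this identification to genuine $S_d$-acyclicity: I need $\thrmR^k S_d(T_d(A_d\otimes V))=0$ for all $k>0$. By the analogue of Lemma~\ref{lem:der sat} applied to the Serre pair $\Mod_{\bfA,\leq d-1}\subset\Mod_{\bfA,\leq d}$, this reduces to showing
\[\thrmR\Hom_\bfA(N,A_d\otimes V)=0\quad\text{for every } N\in\Mod_{\bfA,\leq d-1}.\]
I would reduce by d\'evissage to finitely generated $N$ annihilated by $\frakp_{d-1}$. Choosing any fixed nonzerodivisor $a\in\frakp_{d-1}/\frakp_d\subset A_d$ (which exists because $A_d$ is a domain and $\frakp_{d-1}\supsetneq\frakp_d$), the element $a$ kills $N$ but acts injectively on $A_d\otimes V$ since $A_d$ is a domain and $V$ is $\C$-flat. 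The short exact sequence
\[0\to A_d\otimes V\xrightarrow{\,a\,}A_d\otimes V\to (A_d/(a))\otimes V\to 0,\]
combined with the annihilation of $\Ext^*_\bfA(N,-)$ by $a$, produces an injection $\Ext^k_\bfA(N,A_d\otimes V)\hookrightarrow\Ext^k_\bfA(N,(A_d/(a))\otimes V)$; iterating with a regular sequence in $\frakp_{d-1}/\frakp_d$ eventually makes the support of the target incomparable with that of $N$, forcing the $\Ext$ groups to vanish.

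Granted this acyclicity, $\mathscr{F}(J^\bullet)$ is a resolution of $\mathscr{F}(M)$ by $S_d$-acyclic objects, hence computes $\thrmR S_d(\mathscr{F}(M))$. Since termwise $S_d(\mathscr{F}(J^i))=A_d\otimes V^i=\overline{S}_d(J^i)$, the two resulting complexes coincide, yielding the desired isomorphism $\thrmR\overline{S}_d(M)\cong\thrmR S_d(M)$.
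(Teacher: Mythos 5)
Your overall skeleton matches the paper's: identify $\overline{S}_d$ with $S_d$ on $\Mod_d^\gen$, prove that the injectives of $\Mod_d^\gen$ are $S_d$-acyclic by showing their images under the section functor are derived saturated with respect to $\Mod_{\bfA,\leq d-1}$ (via the criterion of Lemma~\ref{lem:der sat}), and then compute both derived functors from one injective resolution. The gap is in the crucial $\Ext$-vanishing step. The groups $\Ext^i_\bfA(N,A_d\otimes V)$ appearing in Lemma~\ref{lem:der sat} are Ext groups in the \emph{equivariant} category $\Mod_\bfA$. Multiplication by a single element $a\in\frakp_{d-1}/\frakp_d$ is not a morphism in $\Mod_\bfA$ (there are no nonconstant $\P$-invariants in $\bfA$, and $(a)$ is not a $\P$-ideal), so your short exact sequence $0\to A_d\otimes V\xrightarrow{a}A_d\otimes V\to(A_d/(a))\otimes V\to 0$ does not live in $\Mod_\bfA$, and the assertion that $a$ annihilates $\Ext^*_\bfA(N,-)$ has no meaning there: these Ext groups are just $\C$-vector spaces, not modules over $A_d$ on which non-invariant elements act. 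Even ignoring equivariance, the endgame fails: every element you may use must lie in $\frakp_{d-1}$ (to kill $N$), so $V(\frakp_{d-1})$ is contained in the support of $(A_d/(a_1,\ldots,a_r))\otimes V$ for every finite regular sequence; the supports never become incomparable with $\Supp(N)$, and no vanishing is forced, so the induction has no terminating step.

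The paper closes exactly this point differently, and the ingredient you never invoke is the one that does the work: for $I$ injective in $\Mod_d^\gen$, the module $J=\overline{S}_d(I)$ is injective as an object of $\Mod_{A_d}$ (Lemma~\ref{lem:inj bij} together with Remark~\ref{rmk:inj A_d}). The case $i=0$ is handled by d\'evissage to $N$ killed by $\frakp_{d-1}$, so that $N$ is a torsion $A_d$-module and $\Hom$ into the torsion-saturated $J$ vanishes; for $i\geq 1$ the derived tensor-hom adjunction gives $\Ext^i_\bfA(N,J)=\Hom_{A_d}(\Tor^\bfA_i(N,A_d),J)$, and the Tor modules are again torsion $A_d$-modules, reducing to the $i=0$ case. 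Working with an arbitrary injective $I$ and $J=\overline{S}_d(I)$, rather than the explicit model $A_d\otimes V$, also sidesteps a secondary weak point in your write-up: $\overline{S}_d$ is a right adjoint, so extending Proposition~\ref{prop:saturated objects} to infinite-length injective $V$ ``by cocontinuity of $\overline{S}_d$'' is not justified as stated. To repair your proof you would replace the regular-element argument by the adjunction-plus-injectivity-over-$A_d$ argument, at which point it becomes the paper's proof.
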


\begin{proof}
    First, by Lemma~\ref{lem:gen A_d subcat}, we may identify the functor $\overline{T}_d:\Mod_{A_d}\to\Mod_d^\gen$ with the restriction of the functor $T_d:\Mod_{\bfA,\leq d}\to\Mod_{\bfA,d}$. Then by uniqueness of right adjoint, we have that $\overline{S}_d$ agrees with $S_d$ on $\Mod_d^\gen$.

    Now, to prove the proposition, we explain why it suffices to show the following claim: if $I\in\Mod_d^\gen$ is injective, then the map $\overline{S}_d(I)\to\thrmR\Sigma_{>d-1}(\overline{S}_d(I))$ is an isomorphism. If so, then since
    \[\thrmR\Sigma_{>d-1}(\overline{S}_d(I))=\thrmR S_{>d-1}(T_{>d-1}\overline{S}_d(I))=\thrmR S_{>d-1}(I)=\thrmR S_d(I),\]
    we have that $\overline{S}_d(I)\cong\thrmR S_d(I)$ and so $I$ is $S_d$-acyclic. Furthermore, $S_d(I)=\overline{S}_d(I)$ by the previous paragraph.  
    Then, suppose $M\to I^\bullet$ is an injective resolution of $M$ in $\Mod_d^\gen$. Since each injective object is $S_d$-acyclic, we see that $S_d(I^\bullet)$ gives $\thrmR S_d(M)$. We also have that $\overline{S}_d(I^\bullet)=S_d(I^\bullet)$, and the former computes $\thrmR\overline{S}_d(M)$. Thus, $\thrmR S_d(M)=\thrmR \overline{S}_d(M)$ agree.

    Let $J=\overline{S}_d(I)\in\Mod_{A_d}$.
    To prove the claim, it suffices to show by Lemma~\ref{lem:der sat} that $\Ext^i_\bfA(N,J)=0$ for all $N\in\Mod_{\bfA,\leq d-1}$ and all $i\geq 0$. First let $i=0$. We may assume that $N$ is finitely generated, and so since $N\in\Mod_{\bfA,\leq d-1}$, it is annihilated by $\frakp_{d-1}^k$ for some $k$. Then by d\'evissage, we may also assume $N$ itself is annihilated by $\frakp_{d-1}$: we have a filtration 
    \[0=\frakp_{d-1}^kN\subset\frakp_{d-1}^{k-1}N\subset\cdots\subset\frakp_{d-1}N\subset N\]
    where each associated graded piece is annihilated by $\frakp_{d-1}$, and if there are no maps from any given piece of the filtration to $J$, then $\Hom_\bfA(N,J)=0$ as well. If $\frakp_{d-1}N=0$, then $\frakp_dN=0$ as well, and so $N$ is a torsion $A_d$-module. But $J=\overline{S}_d(I)$ is saturated with respect to this category, and so $\Hom_\bfA(N,J)=\Hom_{A_d}(N,J)=0$ by Lemma~\ref{lem:der sat}.

    Now suppose $j\geq 1$. By derived adjunction, we have
    \[\thrmR \Hom_{A_d}(N\tensor[\bfA,der]A_d,J)=\thrmR\Hom_\bfA(N,\thrmR\Hom_\bfA(A_d,J))=\thrmR\Hom_\bfA(N,J),\]
    where the last equality is because $J=\overline{S}_d(I)$ is an $A_d$-module. Since $I$ is injective, we have that $J$ is injective as an $A_d$-module by Lemma~\ref{lem:inj bij}, and so we have that
    \[\Ext^i_\bfA(N,J)=\Hom_{A_d}(\Tor^\bfA_i(N,A_d),J).\]
    Note that $\Tor_i^\bfA(N,A_d)$ is supported on $V(\frakp_{d-1})$ since $N$ is. We thus reduce to the $i=0$ case above and see that $\Ext^i_\bfA(N,J)=0$, as desired.
    \end{proof}

\begin{corollary}
    Suppose $M$ is an $A_d$-module. Then $\thrmR^i\Sigma_{>d-1}(M)$ is annihilated by $\frakp_d$ for all $i\geq 0$.
\end{corollary}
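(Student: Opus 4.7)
The plan is to reduce the statement directly to Proposition~\ref{prop:der sec agree}. The key observation is that since $M$ is annihilated by $\frakp_d$, it already lives in $\Mod_{\bfA,\leq d}$, so the localization $T_{>d-1}(M)$ lies in the subcategory $\Mod_{\bfA,d}\subset\Mod_{\bfA,>d-1}$, and in fact coincides with $T_d(M)$ under the natural inclusion. Under the equivalence of Lemma~\ref{lem:gen A_d subcat} identifying $\Mod_{A_d,d}$ with $\Mod_d^\gen$, this object is $\overline{T}_d(M)$.

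Next I would unwind the saturation $\Sigma_{>d-1}=S_{>d-1}\circ T_{>d-1}$ at the derived level. Because $S_d$ is, by uniqueness of right adjoints, the restriction of $S_{>d-1}$ to $\Mod_{\bfA,d}$, and because injective objects of $\Mod_{\bfA,d}$ remain injective in $\Mod_{\bfA,>d-1}$ (this is the (Inj) hypothesis for the pair $(\Mod_{\bfA,\leq d-1},\Mod_\bfA)$ restricted to the appropriate subcategory, which was verified in the preceding lemma), the derived functors $\thrmR S_{>d-1}$ and $\thrmR S_d$ agree on objects of $\Mod_{\bfA,d}$. Therefore
\[
\thrmR^i\Sigma_{>d-1}(M)\;=\;\thrmR^i S_{>d-1}(T_{>d-1}(M))\;=\;\thrmR^i S_d(T_d(M)).
\]

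Now I would apply Proposition~\ref{prop:der sec agree} to the object $T_d(M)\in\Mod_{\bfA,d}\cong\Mod_d^\gen$: it yields
\[
\thrmR^i S_d(T_d(M))\;\cong\;\thrmR^i\overline{S}_d(\overline{T}_d(M)).
\]
The right-hand side is computed entirely inside $\Mod_{A_d}$ (one takes an injective resolution in $\Mod_d^\gen$ and applies $\overline{S}_d$, which lands in $\Mod_{A_d}$), so every cohomology group is an $A_d$-module, hence annihilated by $\frakp_d$. Transporting this through the isomorphisms above gives the desired conclusion.

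The main point to pin down carefully is the agreement of the two derived section functors $\thrmR S_{>d-1}$ and $\thrmR S_d$ on $\Mod_{\bfA,d}$; everything else is bookkeeping with the identifications provided by Lemma~\ref{lem:gen A_d subcat} and Proposition~\ref{prop:der sec agree}. I expect this compatibility to be the only mildly subtle step, and it follows from Lemma~\ref{lem:inj bij} applied to both Serre pairs, which ensures that an injective resolution in $\Mod_{\bfA,d}$ can be used to compute both derived functors simultaneously.
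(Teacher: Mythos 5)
Your proof is correct and follows essentially the same route as the paper: identify $\thrmR^i\Sigma_{>d-1}(M)$ with $\thrmR^i S_d(T_d(M))$, invoke Proposition~\ref{prop:der sec agree} to replace this with $\thrmR^i\overline{S}_d(\overline{T}_d(M))$, and observe that the latter is computed inside $\Mod_{A_d}$, hence killed by $\frakp_d$. The extra care you take in justifying the agreement of $\thrmR S_{>d-1}$ and $\thrmR S_d$ on $\Mod_{\bfA,d}$ is a detail the paper leaves implicit, but it is the same argument.
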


\begin{proof}
    We have that $\thrmR^i\Sigma_{>d-1}(M)=\thrmR^iS_d(T_d(M))$. Since $T_d(M)\in\Mod_d^\gen$, Proposition~\ref{prop:der sec agree} shows that $\thrmR^iS_d(T_d(M))=\thrmR^i\overline{S}_d(T_d(M))$, which is an object in $\Mod_{A_d}$.
\end{proof}

\subsection{Generators for $\thrmD^b_\fg(\bfA)$}

In this subsection, we prove the main finiteness results on local cohomology and derived saturation. We first prove the following lemma on finite generation of the derived section functors.

\begin{lemma}\label{lem:fg RS}
    If $M\in\Mod^\fg_{\bfA,d}$, then $\thrmR^i S_{d}(M)$ is a finitely generated $\bfA$-module for all $i\geq 0$, and is zero for $i\gg 0$.
\end{lemma}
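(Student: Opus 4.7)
The plan is to reduce to the case $M = T_d(N)$ with $N$ a finitely generated $A_d$-module, and then use Proposition~\ref{prop:der sec agree} together with the finite injective dimension of finite length objects in $\Mod_d^\gen$ (Corollary~\ref{cor:gen Amod}) to build an explicit bounded resolution by finitely generated $\bfA$-modules.

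For the reduction, write $M = T_d(M')$ for some finitely generated $M' \in \Mod_{\bfA, \leq d}$. Since $M'$ is noetherian and locally annihilated by a power of $\frakp_d$, there is an $r \geq 1$ with $\frakp_d^r M' = 0$. The $\frakp_d$-adic filtration $M' \supset \frakp_d M' \supset \cdots \supset \frakp_d^r M' = 0$ has subquotients that are finitely generated $A_d$-modules, and exactness of $T_d$ combined with the long exact sequences for $\thrmR S_d$ reduces to the case $M = T_d(N)$ with $N$ a finitely generated $A_d$-module.

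By Lemma~\ref{lem:gen A_d subcat} we may identify $\Mod_{A_d, d}$ with $\Mod_d^\gen$, and Proposition~\ref{prop:der sec agree} then gives $\thrmR^i S_d(T_d(N)) \cong \thrmR^i \overline{S}_d(\overline{T}_d(N))$. By Corollary~\ref{cor:gen Amod}, $\overline{T}_d(N)$ has finite length and finite injective dimension in $\Mod_d^\gen$, and admits a finite resolution whose terms are finite direct sums of the indecomposable injectives $\overline{T}_d(A_d \otimes T_{\underline{\lambda}})$. The $\P$-representations $T_{\underline{\lambda}} = \bigotimes_{i=1}^n \bfS_{\lambda^i}(\bfV/\bfV_{n-i})$ have finite length, as can be verified by restricting to the Levi subgroup $\bfL$, under which each $\bfV/\bfV_{n-i}$ decomposes as a finite direct sum of simple $\bfL$-representations. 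Consequently, by Proposition~\ref{prop:saturated objects}, $\overline{S}_d(\overline{T}_d(A_d \otimes T_{\underline{\lambda}})) \cong A_d \otimes T_{\underline{\lambda}}$, which is finitely generated as an $\bfA$-module. Applying $\overline{S}_d$ to the resolution yields a bounded complex of finitely generated $\bfA$-modules, and noetherianity of $\bfA$ then ensures that the cohomology groups $\thrmR^i S_d(M)$ are themselves finitely generated and vanish beyond the length of the resolution.

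The main obstacle I anticipate is handling the reduction step cleanly---in particular, ensuring that the long exact sequence argument descends finite generation along the $\frakp_d$-adic filtration and interacts correctly with the identification of $\Mod_{A_d, d}$ as a subcategory of $\Mod_{\bfA, d}$. Once this is secured, the remainder is a direct application of the finite injective dimension result from \S\ref{sec:rep H_d} combined with Proposition~\ref{prop:saturated objects}.
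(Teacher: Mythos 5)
Your proposal is correct and follows essentially the same route as the paper: dévissage along the $\frakp_d$-adic filtration to reduce to a finitely generated $A_d$-module, then the identification $\thrmR S_d\cong\thrmR\overline{S}_d$ from Proposition~\ref{prop:der sec agree}, the finite injective resolution in $\Mod_d^\gen$ from Corollary~\ref{cor:gen Amod}, and Proposition~\ref{prop:saturated objects} to recognize the sections of the injective terms as the finitely generated modules $A_d\otimes V$. Your version merely spells out the dévissage and uses indecomposable injectives $\overline{T}_d(A_d\otimes T_{\underline{\lambda}})$ where the paper writes finite length injective $\P$-representations $V^i$, which is the same thing up to finite direct sums.
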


\begin{proof}
By d\'evissage, we may assume that $M$ is annihilated by $\frakp_d$, and so $M$ is a finitely generated object of $\Mod^\gen_d$. By Corollary~\ref{cor:gen Amod}, there is a finite injective resolution $M\to\overline{T}_d(A_d\otimes V^\bullet)$, with $V^i$ each finite length injective $\P$-representations. Each $\overline{T}_d(A_d\otimes V^i)$ is $\overline{S}_d$-acyclic. Thus, $\thrmR\overline{S}_d(M)$ is given by
\[\overline{S}_d\overline{T}_d(A_d\otimes V^\bullet)=A_d\otimes V^\bullet,\]
where the equality follows from Proposition~\ref{prop:saturated objects}. Since $M\in\Mod^\gen_d$ by assumption, this is also equal to $\thrmR S_d(M)$ by Proposition~\ref{prop:der sec agree}. The resolution $A_d\otimes V^\bullet$ is finite and each object is finitely generated, and so the result follows.
\end{proof}

\begin{theorem}
    For each $d=0,\ldots,n$, if $M\in \thrmD^b_\fg(\bfA)$, then $\thrmR\Sigma_{>d}(M),\thrmR\Gamma_{\leq d}(M)$ are also in $\thrmD^b_\fg(\bfA)$.
\end{theorem}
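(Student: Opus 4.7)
\emph{Plan.} The plan is to do reverse induction on $d$, using the standard triangle to convert between the two functors and the factorization of Serre quotients to relate consecutive levels of the filtration.

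\emph{Reduction and base case.} For any $M \in \thrmD(\bfA)$ we have the triangle
\[\thrmR\Gamma_{\leq d}(M) \to M \to \thrmR\Sigma_{>d}(M) \xrightarrow{+1},\]
which exists because the (Inj) hypothesis has just been verified. Hence the conclusions for $\thrmR\Gamma_{\leq d}(M)$ and $\thrmR\Sigma_{>d}(M)$ are equivalent, and I focus on the latter. For $d=n$ we have $\Mod_{\bfA,>n}=0$, so $\thrmR\Sigma_{>n}=0 \in \thrmD^b_\fg(\bfA)$.

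\emph{Inductive step.} Assume the statement holds for $d$; I derive it for $d-1$. The quotient functor $T_{>d}$ factors as $T_{>d} = T' \circ T_{>d-1}$, where $T'\colon \Mod_{\bfA,>d-1}\to\Mod_{\bfA,>d}$ is the Serre localization killing the subcategory of $\Mod_{\bfA,>d-1}$ equivalent to $\Mod_{\bfA,d}$. By uniqueness of right adjoints, $S_{>d} = S_{>d-1}\circ S'$, where $S'$ is right adjoint to $T'$, and moreover $S_{>d-1}$ restricted to $\Mod_{\bfA,d}$ agrees with $S_d$. Applying the torsion triangle for $T',S'$ to $T_{>d-1}(M)$ in $\Mod_{\bfA,>d-1}$ and then $\thrmR S_{>d-1}$ produces a triangle
\[\thrmR S_d(N) \to \thrmR\Sigma_{>d-1}(M) \to \thrmR\Sigma_{>d}(M) \xrightarrow{+1},\]
where $N \in \thrmD(\Mod_{\bfA,d})$ is the derived torsion of $T_{>d-1}(M)$ with respect to the subcategory $\Mod_{\bfA,d}\subset\Mod_{\bfA,>d-1}$. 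By inductive hypothesis the right-hand term is in $\thrmD^b_\fg(\bfA)$, so once I know that $N\in\thrmD^b_\fg(\Mod_{\bfA,d})$, Lemma~\ref{lem:fg RS} will give $\thrmR S_d(N)\in\thrmD^b_\fg(\bfA)$, and the triangle will close the induction.

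\emph{Main obstacle.} The crux is the last subclaim: $\thrmR^i\Gamma^{(d)}(T_{>d-1}(M))$ is finitely generated in $\Mod_{\bfA,d}$ for each $i$ and vanishes for $i\gg 0$. By the usual d\'evissage I may assume $M$ is a single finitely generated module. The $i=0$ case is immediate from noetherianity of $\bfA$, since the torsion submodule is a submodule of a finitely generated object. For higher $i$ I expect to argue as follows: take a resolution of $T_{>d-1}(M)$ by injectives of $\Mod_{\bfA,>d-1}$ that arise from the finite length injective $\P$-representations via $A_d \otimes -$ and its localization (using Corollary~\ref{cor:inj H_d} and Proposition~\ref{prop:repH_d equiv Cd-mod} to pick out explicit injective hulls of the simples), then verify that $\Gamma^{(d)}$ applied to each such injective is again a finitely generated object of $\Mod_{\bfA,d}$ and that the resulting complex is bounded. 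The boundedness step is the most delicate and will use an Artin--Rees-type argument of the same flavor as in the verification of the (Inj) hypothesis, combined with the finite injective dimension of finite length objects of $\Mod_d^\gen$ from Corollary~\ref{cor:gen Amod}.
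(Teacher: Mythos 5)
Your skeleton (descending induction on $d$, the torsion/saturation triangle, and Lemma~\ref{lem:fg RS} to handle the level-$d$ contribution) is exactly the shape of the argument the paper invokes by citing Sam--Snowden, but your proof has a genuine gap precisely at the point you label the ``main obstacle'': you never prove that the derived level-$d$ torsion $N$ of $T_{>d-1}(M)$ lies in $\thrmD^b_\fg(\Mod_{\bfA,d})$, you only record an expectation. The route you sketch for it is moreover doubtful as stated: Corollary~\ref{cor:gen Amod} gives finite injective dimension only for finite length objects of $\Mod_d^\gen$, whereas $T_{>d-1}(M)$ is a general object of $\Mod_{\bfA,>d-1}$, which is not equivalent to $\Mod_d^\gen$ (that identification, Lemma~\ref{lem:gen A_d subcat}, requires $\frakp_d M$ to be supported on $V(\frakp_{d-1})$); the injectives of $\Mod_{\bfA,>d-1}$ are not just localizations of $A_d\otimes V$ but include saturations of $A_e\otimes V$ for all $e\geq d$, there is no a priori bounded resolution of $T_{>d-1}(M)$ by them, and finite generation of the torsion functor applied to such injectives is exactly the kind of statement one is trying to prove. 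So the ``delicate Artin--Rees boundedness step'' is not a routine verification; it is the theorem.

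The fix is that your inductive hypothesis already contains what you need, so the hard direct argument can be avoided entirely. Apply the exact functor $T_{>d-1}$ to the triangle
\[\thrmR\Gamma_{\leq d}(M)\to M\to\thrmR\Sigma_{>d}(M)\xrightarrow{+1}.\]
The third term becomes derived saturated with respect to $\Mod_{\bfA,d}\subset\Mod_{\bfA,>d-1}$ (by Lemma~\ref{lem:der sat} and the compatibility of the quotient functors), and the first term has cohomology in $\Mod_{\bfA,d}$, so this triangle identifies your $N$ with $T_{>d-1}(\thrmR\Gamma_{\leq d}(M))$. By the inductive hypothesis $\thrmR\Gamma_{\leq d}(M)\in\thrmD^b_\fg(\bfA)$, and $T_{>d-1}$ is exact and preserves finite generation, so $N\in\thrmD^b_\fg(\Mod_{\bfA,d})$; now dévissage plus Lemma~\ref{lem:fg RS} (together with Proposition~\ref{prop:der sec agree} to compare $\thrmR S_d$ with $\thrmR\overline{S}_d$) gives $\thrmR S_d(N)\in\thrmD^b_\fg(\bfA)$, and your triangle $\thrmR S_d(N)\to\thrmR\Sigma_{>d-1}(M)\to\thrmR\Sigma_{>d}(M)$ closes the induction; the statement for $\thrmR\Gamma_{\leq d-1}$ then follows from the torsion triangle. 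With this replacement your argument matches the one in the literature that the paper cites; without it, the proof is incomplete.
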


\begin{proof}
The proof of \cite[Theorem 6.10]{SSglII} applies here.
\end{proof}

The results of \cite[\S4.3]{SSglII} therefore apply to our filtration of $\Mod_\bfA$. In particular, we have the following semi-orthogonal decomposition of $\thrmD^b_\fg(\bfA)$.

\begin{corollary}\label{cor:semiorthog}
    We have a semi-orthogonal decomposition
    \[\thrmD^b_\fg(\bfA)=\langle\thrmD^b_\fg(\bfA)_0,\ldots,\thrmD^b_\fg(\bfA)_n\rangle.\]
\end{corollary}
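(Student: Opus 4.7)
The plan is to deduce the semi-orthogonal decomposition from the abstract machinery of Sam--Snowden \cite[\S4.3]{SSglII}, after verifying that the filtration $\Mod_{\bfA,\leq 0}\subset\cdots\subset\Mod_{\bfA,\leq n}$ satisfies the required hypotheses. Two ingredients are essential: the \emph{(Inj) hypothesis}, which we have already verified holds at each step of the filtration, and the preservation of $\thrmD^b_\fg(\bfA)$ by $\thrmR\Sigma_{>d}$ and $\thrmR\Gamma_{\leq d}$, which is exactly the content of the theorem immediately preceding this corollary. Once both hold, the formal recollement-type arguments of \cite[\S4.3]{SSglII} apply verbatim.

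The concrete steps I would carry out are as follows. First, for each $d$, the adjunction $(T_{>d},S_{>d})$ together with the (Inj) hypothesis produces, for every $M\in\thrmD^b_\fg(\bfA)$, a canonical distinguished triangle
\[\thrmR\Gamma_{\leq d}(M)\longrightarrow M\longrightarrow\thrmR\Sigma_{>d}(M)\xrightarrow{+1},\]
and the preceding theorem ensures both outer terms remain in $\thrmD^b_\fg(\bfA)$. Iterating this with $d=0,1,\ldots,n-1$ yields a finite filtration of $M$ whose associated graded pieces lie in $\thrmD^b_\fg(\bfA)_d$ for $d=0,\ldots,n$; this handles the generation condition. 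Second, for semi-orthogonality, suppose $M\in\thrmD^b_\fg(\bfA)_d$ and $N\in\thrmD^b_\fg(\bfA)_e$ with $d<e$. Then $M\in\thrmD(\bfA)_{\leq d}\subset\thrmD(\bfA)_{\leq e-1}$, so (by standard truncation arguments and devissage) $M$ is built from objects of $\Mod_{\bfA,\leq e-1}$, while $N\in\thrmD(\bfA)_{>e-1}$ is derived-saturated with respect to $\frakp_{e-1}$. Lemma~\ref{lem:der sat} then gives $\thrmR\Hom_{\bfA}(M,N)=0$, establishing the required vanishing.

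The main obstacle I anticipate is bookkeeping around the interplay between the $\thrmD(\bfA)_{\leq d}$/$\thrmD(\bfA)_{>d}$ vanishing conditions and the fact that these are defined via derived functors rather than cohomological support. In particular, verifying that the associated graded pieces of the iterated triangle filtration genuinely land in $\thrmD^b_\fg(\bfA)_d=\thrmD(\bfA)_{\leq d}\cap\thrmD(\bfA)_{>d-1}\cap\thrmD^b_\fg(\bfA)$ requires one to check that $\thrmR\Gamma_{\leq d}$ kills $\thrmR\Sigma_{>d}$ and vice versa on $\thrmD^b_\fg(\bfA)$; this is where the (Inj) hypothesis is used crucially, because it guarantees that injective resolutions in $\Mod_{\bfA,\leq d}$ remain injective in $\Mod_\bfA$, so the derived functors compose as expected. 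Once these verifications are in place, the conclusion is formal.
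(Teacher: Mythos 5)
Your proposal is correct and follows essentially the same route as the paper: the paper also deduces the corollary by combining the (Inj) hypothesis with the immediately preceding theorem that $\thrmR\Sigma_{>d}$ and $\thrmR\Gamma_{\leq d}$ preserve $\thrmD^b_\fg(\bfA)$, and then invoking the formal machinery of \cite[\S 4.3]{SSglII}. Your explicit unwinding of that machinery (the triangles $\thrmR\Gamma_{\leq d}(M)\to M\to\thrmR\Sigma_{>d}(M)$ for generation and Lemma~\ref{lem:der sat} for the orthogonality) is exactly what the cited results encapsulate.
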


Finally, we describe a set of generators of $\thrmD^b_\fg(\bfA)$. Recall that $S_{\underline{\lambda}}$ denotes the simple $\P$-representation corresponding to the $n$-tuple of partitions $\underline{\lambda}$.

\begin{corollary}\label{cor:gens derived cat}
    For $d=0,\ldots,n$, the category $\thrmD^b_\fg(\bfA)_d$ is the triangulated subcategory of $\thrmD^b_\fg(\bfA)$ generated by the objects of the form $A_d\otimes S_{\underline{\lambda}}$. The category $\thrmD^b_\fg(\bfA)$ is generated by all such objects, allowing $d$ to vary.
\end{corollary}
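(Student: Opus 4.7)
The second statement follows from the first via the semi-orthogonal decomposition of Corollary~\ref{cor:semiorthog}, so I focus on the first. The plan is to transport the problem through the chain of equivalences established in \S\ref{sec:rep H_d}--\S\ref{sec:gen}. Using the standard recollement machinery from \cite[\S4]{SSglII} together with Proposition~\ref{prop:der sec agree}, Lemma~\ref{lem:fg RS}, Lemma~\ref{lem:gen A_d subcat}, and Theorem~\ref{thm:gen mods equiv Hd reps}, the derived section functor $\thrmR S_d$ identifies $\thrmD^b_\fg(\bfA)_d$ with the bounded derived category of the finitely generated objects of $\Rep(H_d)$ (i.e., finite length $H_d$-representations, by Corollary~\ref{cor:repH_d finite/inj}). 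Under this identification, using Proposition~\ref{prop:saturated objects} to evaluate on the saturated objects $A_d\otimes V$, the $\bfA$-module $A_d\otimes S_{\underline{\lambda}}$ corresponds to the restriction of $S_{\underline{\lambda}}$ to $H_d$. Since every finite length $H_d$-representation has finite injective dimension, the $H_d$-simples $S_{d,\underline{\lambda}}$ generate this triangulated category, and it suffices to show that the restrictions $S_{\underline{\lambda}}|_{H_d}$ generate the same subcategory.

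The two families of simples differ only in the $d$-th tensor factor, with $\bfS_{\lambda^d}(\bfV_{n-d+1}/\bfV_{n-d})$ (in $S_{\underline{\lambda}}$) versus $\bfS_{\lambda^d}(W_d/\bfV_{n-d})$ (in $S_{d,\underline{\lambda}}$), and they coincide when $\lambda^d$ is empty. From the defining $H_d$-equivariant short exact sequence
\[0\to W_d/\bfV_{n-d}\to \bfV_{n-d+1}/\bfV_{n-d}\to\C\to 0,\]
applying the Schur functor $\bfS_{\lambda^d}$ and invoking the standard Pieri-type filtration of a Schur functor along an inclusion of codimension one yields a short exact sequence of $H_d$-representations
\[0\to S_{d,\underline{\lambda}}\to S_{\underline{\lambda}}|_{H_d}\to Q\to 0,\]
where $Q$ admits a finite filtration with graded pieces of the form $S_{d,\underline{\mu}}$ with $\underline{\mu}$ agreeing with $\underline{\lambda}$ outside the $d$-th coordinate, $\mu\subsetneq\lambda^d$, and $\lambda^d/\mu$ a horizontal strip. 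In particular $|\mu|<|\lambda^d|$ for each such $\mu$. Induction on $|\lambda^d|$, with trivial base case $\lambda^d=\emptyset$, then shows that every $S_{d,\underline{\lambda}}$ lies in the triangulated subcategory generated by $\{S_{\underline{\mu}}|_{H_d}\}$, completing the argument.

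The main anticipated obstacle is the derived-categorical bookkeeping in the first paragraph: verifying that the recollement identification of $\thrmD^b_\fg(\bfA)_d$ with the bounded derived category of finite length $H_d$-representations actually sends $A_d\otimes S_{\underline{\lambda}}$ to $S_{\underline{\lambda}}|_{H_d}$, so that one is permitted to pull the generation statement back to $\thrmD^b_\fg(\bfA)$. Once this correspondence of generators is in hand, the Pieri filtration and the one-variable induction on $|\lambda^d|$ are routine.
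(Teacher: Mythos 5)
Your overall skeleton (transfer the problem through $\thrmR S_d$ to the generic category $\Mod_d^\gen\cong\Rep(H_d)$ and pull a generation statement back) is the paper's strategy, but the step you yourself flag as the ``main anticipated obstacle'' is exactly where the content lies, and your proposal does not supply it. Two concrete problems. First, the identification of $\thrmD^b_\fg(\bfA)_d$ with the bounded derived category of finite length $H_d$-representations is not an equivalence for $d<n$: the recollement gives an equivalence with $\thrmD^b_\fg(\Mod_{\bfA,d})$, and $\Mod_{\bfA,d}$ is strictly larger than $\Mod_d^\gen$ (it contains, e.g., the image of $\bfA/\frakp_d^2$, and Ext groups between objects of $\Mod_d^\gen$ computed in $\Mod_{\bfA,d}$ can be bigger). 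Only the weaker statement that the objects of $(\Mod_d^\gen)^\fg$ generate $\thrmD^b_\fg(\Mod_{\bfA,d})$ holds, by d\'evissage along the $\frakp_d$-adic filtration; that suffices, but it is what must be used. Second, and more seriously, to pull generation back along $\thrmR S_d$ you need $A_d\otimes S_{\underline{\lambda}}$ to actually lie in $\thrmD^b_\fg(\bfA)_d$ and to satisfy $A_d\otimes S_{\underline{\lambda}}\cong\thrmR S_d\bigl(\overline{T}_d(A_d\otimes S_{\underline{\lambda}})\bigr)$, i.e.\ derived saturation. Proposition~\ref{prop:saturated objects} only gives the underived statement $\overline{S}_d\overline{T}_d(A_d\otimes S_{\underline{\lambda}})=A_d\otimes S_{\underline{\lambda}}$; higher $\thrmR^iS_d$ of $\overline{T}_d(A_d\otimes S_{\underline{\lambda}})$ are not controlled by anything you cite, and this does not come for free.

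The paper resolves precisely this point by routing the transfer through the \emph{injective} objects of $\Mod_d^\gen$: every finitely generated object of $\Mod_d^\gen$ has a finite injective resolution by objects $\overline{T}_d(A_d\otimes V)$ with $V$ a finite length injective $\P$-representation; these are $S_d$-acyclic, their sections are computed by Proposition~\ref{prop:saturated objects} to be $A_d\otimes V$, and then one filters $A_d\otimes V$ inside $\Mod_\bfA$ (exactness of $A_d\otimes -$ applied to a composition series of $V$) by the objects $A_d\otimes S_{\underline{\lambda}}$. This simultaneously proves generation and shows the generators lie in $\thrmD^b_\fg(\bfA)_d$, making your second paragraph unnecessary: the Pieri-type filtration comparing $S_{\underline{\lambda}}|_{H_d}$ with the $H_d$-simples $S_{d,\underline{\mu}}$ is correct as far as it goes, but once one works with the injectives $T_{\underline{\mu}}$ rather than the simples, no comparison of the two families of simples is needed. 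As written, your argument would only be complete after supplying essentially the paper's injective-resolution step, so the proposal has a genuine gap at its central transfer.
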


\begin{proof}
By \cite[Proposition 4.14]{SSglII}, we have an equivalence of categories $\thrmR S_d:\thrmD^b_\fg(\Mod_{\bfA,d})\to\thrmD^b_\fg(\bfA)_d$. Finitely generated objects of $\Mod_{\bfA,d}$ have finite length filtrations with associated graded pieces given by objects in $(\Mod^\gen_d)^\fg$, and $\Mod_{\bfA,d}$ generates $\thrmD^b_\fg(\Mod_{\bfA,d})$. Thus, the image of $(\Mod^\gen_d)^\fg$ under $\overline{S}_d$ generates $\thrmD^b_\fg(\bfA)_d$.

By Corollary~\ref{cor:gen Amod}, each object of $(\Mod^\gen_d)^\fg$ has a finite resolution by objects of the form $\overline{T}_d(A_d\otimes V)$, where $V$ is a finite length injective object of $\Rep(\P)$. Then, such objects have finite length filtrations with associated graded pieces of the form $\overline{T}_d(A_d\otimes S_{\underline{\lambda}})$ with $S_{\underline{\lambda}}$ a simple $\P$-representation. The result then follows from $\overline{S}_d(\overline{T}_d(A_d\otimes S_{\underline{\lambda}}))=A_d\otimes S_{\underline{\lambda}}$ by Proposition~\ref{prop:saturated objects}.
\end{proof}

\subsection{Grothendieck groups}

The Grothendieck ring $\thrmK(\Rep(\P))$ is given by $\Lambda^{\otimes n}$, where $\Lambda$ is the ring of symmetric functions. One can consider the category $\Mod_\bfA$ as a module over $\Rep(\P)$, and so $\thrmK(\Mod_\bfA)$ is a module over $\Lambda^{\otimes n}$. In this subsection, we describe this module structure.

\begin{lemma}\label{lem:K(D) mod}
    For each $d=0,\ldots,n$, the Grothendieck group $\thrmK(\thrmD^b_\fg(\bfA)_d)$ is a free $\Lambda^{\otimes n}$-module of rank $1$ with generator $[A_d]$.
\end{lemma}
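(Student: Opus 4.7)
The plan is to identify $\thrmK(\thrmD^b_\fg(\bfA)_d)$ with $\thrmK(\Rep(H_d))$ as an abelian group, match the $\Lambda^{\otimes n}$-action with restriction from $\P$ to $H_d$, and then verify via a unitriangular change of basis that $[A_d]$ generates freely.

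First I would use Corollary~\ref{cor:gens derived cat} to observe that $\thrmK(\thrmD^b_\fg(\bfA)_d)$ is generated as an abelian group by the classes $[A_d \otimes S_{\underline{\lambda}}]$ as $\underline{\lambda}$ ranges over $n$-tuples of partitions. The action of $\Lambda^{\otimes n} = \thrmK(\Rep(\P))$ on $\thrmK(\thrmD^b_\fg(\bfA)_d)$ is induced by $[V]\cdot [M] := [V\otimes M]$; this descends to the subcategory because $\thrmR\Sigma_{>d}$ and $\thrmR\Gamma_{\leq d-1}$ are $\Rep(\P)$-linear. Since $[A_d]\cdot [S_{\underline{\lambda}}] = [A_d\otimes S_{\underline{\lambda}}]$ and the $[S_{\underline{\lambda}}]$ form a $\Z$-basis of $\Lambda^{\otimes n}$, this shows that $[A_d]$ generates the module.

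Next I would string together the identifications needed for freeness. The equivalence $\thrmR S_d$ from Corollary~\ref{cor:semiorthog} gives $\thrmK(\thrmD^b_\fg(\bfA)_d) = \thrmK(\Mod^\fg_{\bfA,d})$. By d\'evissage (as in Corollary~\ref{cor:gens derived cat}), every finitely generated object of $\Mod_{\bfA,d}$ admits a finite filtration with associated graded pieces in $(\Mod^\gen_d)^\fg$, so $\thrmK(\Mod^\fg_{\bfA,d}) = \thrmK((\Mod^\gen_d)^\fg)$. Finally, the tensor equivalence $\overline{\Phi}_d$ of Theorem~\ref{thm:gen mods equiv Hd reps}, applied to finite length objects, gives $\thrmK((\Mod^\gen_d)^\fg) = \thrmK(\Rep(H_d))$. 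Tracing through these identifications using Proposition~\ref{prop:saturated objects}, the class $[A_d\otimes V]$ corresponds to $[V|_{H_d}]$ for any finite length polynomial $\P$-representation $V$.

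It then suffices to show that the $\Z$-linear map $\Lambda^{\otimes n} \to \thrmK(\Rep(H_d))$ given by $[V]\mapsto [V|_{H_d}]$ is an isomorphism. By Corollary~\ref{cor:repH_d finite/inj}, the target has $\Z$-basis $\{[S_{d,\underline{\mu}}]\}$. Applying the Schur functor $\bfS_{\lambda^d}$ to the short exact sequence of $H_d$-representations
\[ 0 \to W_d/\bfV_{n-d} \to \bfV_{n-d+1}/\bfV_{n-d} \to \C \to 0, \]
on which $H_d$ acts trivially on the quotient $\C$ since it stabilizes $\xi_d$, yields a filtration whose associated graded pieces are indexed by partitions $\mu^d \subseteq \lambda^d$ with $\lambda^d/\mu^d$ a horizontal strip. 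Combining with the remaining tensor factors of $S_{\underline{\lambda}}$, we obtain in $\thrmK(\Rep(H_d))$ the expansion
\[ [S_{\underline{\lambda}}|_{H_d}] = \sum_{\mu^d \subseteq \lambda^d,\; \lambda^d/\mu^d \text{ horiz.\ strip}} [S_{d,(\lambda^1,\ldots,\lambda^{d-1},\mu^d,\lambda^{d+1},\ldots,\lambda^n)}]. \]
The leading term ($\mu^d=\lambda^d$) has coefficient $1$, so this change-of-basis matrix is unitriangular with respect to any total order refining containment of partitions, hence invertible over $\Z$. Therefore $\{[S_{\underline{\lambda}}|_{H_d}]\}$ is also a $\Z$-basis of $\thrmK(\Rep(H_d))$, and $[A_d]$ generates $\thrmK(\thrmD^b_\fg(\bfA)_d)$ freely as a $\Lambda^{\otimes n}$-module.

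The main obstacle is verifying that all four identifications above are compatible with the $\Lambda^{\otimes n}$-action in such a way that $[A_d]$ goes to $1$; once that is pinned down, the unitriangularity calculation is the key combinatorial input and everything else is formal.
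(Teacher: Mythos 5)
Your proposal is correct, and it is in fact more complete than the paper's own proof, which consists of the single assertion that Corollary~\ref{cor:gens derived cat} makes the $\Lambda^{\otimes n}$-linear map $[V]\mapsto[A_d\otimes V]$ an isomorphism. That corollary by itself only gives surjectivity, i.e., that $[A_d]$ generates; the linear independence (freeness) is left implicit. You supply exactly this missing half: identifying $\thrmK(\thrmD^b_\fg(\bfA)_d)$ with $\thrmK(\Rep(H_d))$ via the equivalence $\thrmR S_d$ (which is really \cite[Proposition 4.14]{SSglII} as invoked in the proof of Corollary~\ref{cor:gens derived cat}, rather than Corollary~\ref{cor:semiorthog} itself), d\'evissage, and $\overline{\Phi}_d$, and then proving that $[V]\mapsto [V|_{H_d}]$ is an isomorphism $\Lambda^{\otimes n}\to\thrmK(\Rep(H_d))$ via the horizontal-strip expansion coming from the exact sequence $0\to W_d/\bfV_{n-d}\to\bfV_{n-d+1}/\bfV_{n-d}\to\C\to 0$; that unitriangular change of basis is the right key input, and it is valid because each row is finite and the order by $\abs{\mu^d}$ is well-founded. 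Two small points you should pin down: (i) for the d\'evissage step, note that the copy of $\Mod^\gen_d$ inside $\Mod_{\bfA,d}$ from Lemma~\ref{lem:gen A_d subcat} is closed under subobjects and quotients (immediate, since subquotients of $T_d(M)$ with $\frakp_d M$ supported on $V(\frakp_{d-1})$ inherit that property), so the inclusion really induces an isomorphism on Grothendieck groups rather than just a surjection; (ii) the trace of $[A_d\otimes V]$ through the identifications is cleanest if you apply the exact localization functor $T_d$ (the quasi-inverse of $\thrmR S_d$ at the level of K-groups) and then $\overline{\Phi}_d$, which gives $\Phi_d(A_d\otimes V)=V|_{H_d}$ directly; Proposition~\ref{prop:saturated objects} is not actually needed at that point. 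With these remarks, your argument is a correct and more explicit version of what the paper leaves to the reader.
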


\begin{proof}
    By Corollary~\ref{cor:gens derived cat}, the $\Lambda^{\otimes n}$-linear map
    \[\thrmK(\Rep(\P))\to\thrmK(\thrmD^b_\fg(\bfA)_d):[V]\mapsto[A_d\otimes V]\]
    is an isomorphism.
\end{proof}

\begin{theorem}\label{thm:groth grp}
    We have an isomorphism of $\Lambda^{\otimes n}$-modules
    \[\thrmK(\Mod_\bfA)\cong \bigoplus_{d=0}^n \thrmK(\thrmD^b_\fg(\bfA)_d).\]
    In particular, $\thrmK(\Mod_\bfA)$ is a free $\Lambda^{\otimes n}$-module of rank $n+1$.
\end{theorem}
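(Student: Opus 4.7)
The plan is to assemble the isomorphism from three standard ingredients: identify $\thrmK(\Mod_\bfA)$ with the K-theory of the bounded derived category $\thrmD^b_\fg(\bfA)$, use the semi-orthogonal decomposition from Corollary~\ref{cor:semiorthog} to split this K-theory as an abelian group, and then upgrade the splitting to an isomorphism of $\Lambda^{\otimes n}$-modules. Applying Lemma~\ref{lem:K(D) mod} to each summand will give the freeness statement. Throughout, the implicit definition of $\thrmK(\Mod_\bfA)$ is the one on finitely generated objects, consistent with the indexing by Lemma~\ref{lem:K(D) mod}.

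First I would invoke the Euler characteristic map $[M] \mapsto \sum_i (-1)^i [\thrmH^i(M)]$ to identify $\thrmK(\Mod_\bfA)$ with $\thrmK(\thrmD^b_\fg(\bfA))$. This is standard for any abelian category whose bounded derived category is well-behaved on finitely generated objects, and the setting here satisfies this since $\bfA$-modules form a locally noetherian category. Next, I would apply the general principle that a semi-orthogonal decomposition $\mathcal{T} = \langle \mathcal{T}_0, \ldots, \mathcal{T}_n\rangle$ of triangulated categories induces a direct sum decomposition $\thrmK(\mathcal{T}) = \bigoplus_d \thrmK(\mathcal{T}_d)$: the mutation triangles produce a canonical filtration of every object with unique associated graded pieces in each $\mathcal{T}_d$, inducing the splitting. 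Applied to Corollary~\ref{cor:semiorthog}, this gives
\[\thrmK(\thrmD^b_\fg(\bfA)) = \bigoplus_{d=0}^n \thrmK(\thrmD^b_\fg(\bfA)_d)\]
as abelian groups.

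The remaining task is to check this abelian-group decomposition is $\Lambda^{\otimes n}$-linear. The $\Lambda^{\otimes n}$-action on $\thrmK(\Mod_\bfA)$ comes from the tensor action of $\Rep(\P)$ on $\Mod_\bfA$, namely $[V] \cdot [M] = [V \otimes M]$ with $\bfA$ acting through $M$. I would verify that each $\thrmD^b_\fg(\bfA)_d$ is stable under tensoring by a finite-length $V \in \Rep(\P)$: tensoring with $V$ is exact because $V$ is flat over $\C$, it preserves finite generation (via the boundedness arguments of \S\ref{subsec:prelim Amods}), and both defining conditions $\thrmR\Gamma_{\leq d}(M) = 0$ and $\thrmR\Sigma_{>d}(M) = 0$ are preserved since $V \otimes -$ commutes with the torsion functor (the condition of being annihilated by a power of $\frakp_d$ transfers from $M$ to $V \otimes M$ and vice versa) and with derived saturation (by the characterization of Lemma~\ref{lem:der sat} together with the derived Hom-adjunction $\thrmR\Hom_\bfA(N, V \otimes M) = \thrmR\Hom_\bfA(V^* \otimes N, M)$ for finite-length $V$). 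Given this stability, the decomposition respects the $\Lambda^{\otimes n}$-action, and Lemma~\ref{lem:K(D) mod} then implies each summand is free of rank one, giving total rank $n+1$.

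The step I expect to be the most delicate is the $\Lambda^{\otimes n}$-linearity of the decomposition, since this requires showing that the pieces of the semi-orthogonal decomposition are themselves tensor subcategories in the appropriate sense. The compatibility of $V \otimes -$ with $\thrmR\Sigma_{>d}$ in particular is not formal; it rests on the fact that $V \otimes -$ is exact, sends injectives to complexes of acyclics for the section functor (which follows from the interaction between the tensor structure and the support filtration), and commutes with $T_{>d}$ up to natural isomorphism. Once this compatibility is in place, the remainder of the argument is bookkeeping.
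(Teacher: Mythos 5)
Your route is the same as the paper's: the group-level splitting is exactly what the paper gets by citing \cite[Proposition 4.17]{SSglII} (equivalently, the semi-orthogonal decomposition of Corollary~\ref{cor:semiorthog}), the real content is the $\Lambda^{\otimes n}$-linearity of the splitting, and freeness then comes from Lemma~\ref{lem:K(D) mod}. The paper phrases linearity via the projection functors $\thrmR\Sigma_{>d-1}\circ\thrmR\Gamma_{\leq d}$; your reformulation as stability of each piece $\thrmD^b_\fg(\bfA)_d$ under $V\otimes-$ (plus uniqueness of the SOD filtration) amounts to the same thing.

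The one step that does not work as written is the adjunction $\thrmR\Hom_\bfA(N,V\otimes M)=\thrmR\Hom_\bfA(V^*\otimes N,M)$. Finite length objects of $\Rep(\P)$ are infinite-dimensional (e.g.\ $\bfV$ itself) and are not dualizable in the polynomial category, so $V^*\otimes N$ is not an object of $\Mod_{\bfA,\leq d}$, and the criterion of Lemma~\ref{lem:der sat} only quantifies over equivariant polynomial modules $N$. The claim you need is still true, but it requires a different argument: membership in $\thrmD(\bfA)_{\leq d}$ is just the condition that all cohomology objects lie in $\Mod_{\bfA,\leq d}$, which passes to $V\otimes M$ because the underlying $\abs{\bfA}$-module of $V\otimes M$ is a direct sum of copies of $M$; but for the condition $\thrmR\Gamma_{\leq d}(M)=0$ you must show $V\otimes-$ commutes with derived torsion (equivalently derived saturation), e.g.\ by resolving by injectives $I$ with $\Gamma_{\leq d}(I)=0$ and checking, using the (Inj) results of \S\ref{sec:ModA} and Lemma~\ref{lem:inj bij}, that $V\otimes I$ stays $\Gamma_{\leq d}$-acyclic. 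This is precisely the compatibility the paper leaves implicit when asserting the projections are $\Lambda^{\otimes n}$-linear; it is not the formal Hom-tensor adjunction you invoked.
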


\begin{proof}
    By \cite[Proposition 4.17]{SSglII}, the above isomorphisms holds as groups. The projection maps $\thrmK(\Mod_\bfA)\to\thrmK(\thrmD^b_\fg(\bfA)_d)$ giving the isomorphism is defined by $\thrmR\Sigma_{>d-1}\circ\thrmR\Gamma_{\leq d}$, and so the projection maps are $\Lambda^{\otimes n}$-linear. Thus, we have the desired isomorphism of $\Lambda^{\otimes n}$-modules. The second statement now follows from Lemma~\ref{lem:K(D) mod}.
\end{proof}

\subsection{Hilbert series}

In this subsection, we define (enhanced) Hilbert series and show that they are rational for finitely generated $\bfA$-modules. This is a generalization of the definitions and results for $\FI$-modules given in \cite[\S 5]{SSglI}.

Suppose $M$ is a polynomial representation of $\bfL\cong\prod_{i=1}^n\GL$. Equivalently, $M$ can be regarded as an $\FB^{\otimes n}$-module, i.e., a sequence $M=(M_{\underline{a}})_{\underline{a}\in\N^n}$ of representations of products of symmetric groups $\frakS_{\underline{a}}$. Under this identification, we have that the first factor of $\FB$ corresponds to $\GL(\bfV_{(n)})$, while the $n^{\text{th}}$ factor corresponds to $\GL(\bfV_{(1)})$; see \cite[\S 4.1]{Yu} for details.

Let $\underline{\lambda}$ be an $n$-tuple of partitions, with $\underline{a}=(\abs{\lambda^i})$. To define the enhanced Hilbert series, we introduce the following notation. 
\begin{itemize}
    \item Let $c_{\underline{\lambda}}$ denote the conjugacy class of elements of $\frakS_{\underline{a}}$ with cycle type $\underline{\lambda}$.
    \item Let $\Tr(c_{\underline{\lambda}}\mid M)$ denote the trace of the action of $c_{\underline{\lambda}}$ on $M_{\underline{a}}$.
    \item Let $t^{\underline{\lambda}}$ denote the monomial
    \[t^{\underline{\lambda}}=\prod_{i\in[n],\,j\geq 1}t_{ij}^{m_j(\lambda^i)},\]
    where $m_j(\lambda^i)$ denotes the number of times the positive integer $j$ appears in the partition $\lambda^i$.
    \item Let $\underline{\lambda}!$ denote the positive integer
    \[\underline{\lambda}!=\prod_{i\in[n],\,j\geq 1}m_j(\lambda^i)!.\]
\end{itemize}

Suppose $M$ is an $\FB^{\otimes n}$-module with $M_{\underline{a}}$ finite-dimensional for each $\underline{a}$. We define the \emph{(enhanced) Hilbert series} $\widetilde{H}_M(t)$ of $M$ to be the formal series in variables $\{t_{ij}:i\in[n],j\geq 1\}$ given by
\[\widetilde{H}_M(t)=\sum_{\underline{\lambda}}\Tr(c_{\underline{\lambda}}\mid M)\frac{t^{\underline{\lambda}}}{\underline{\lambda}!}.\]
Thus, the isomorphism class of $M$ as an $\FB^{\otimes n}$-module can be completely determined by $\widetilde{H}_M(t)$. Under the equivalence of tensor categories between polynomial representations of $\bfL$ and $\FB^{\otimes n}$, the tensor product of $\FB^{\otimes n}$-modules is defined by Day convolution, which is given as follows. Suppose $M,N$ are $\FB^{\otimes n}$-modules. For an $n$-tuple $\underline{a}=(a_1,\ldots,a_n)$, the $\frakS_{\underline{a}}$-representation corresponding to $M\otimes N$ is given by
\[(M\otimes N)_{\underline{a}}=\bigoplus_{\substack{\underline{b},\underline{c}\in\N^n,\\ a_i=b_i+c_i}}\text{Ind}_{\frakS_{\underline{b}}\times\frakS_{\underline{c}}}^{\frakS_{\underline{a}}}(M_{\underline{b}}\otimes N_{\underline{c}}).\]
Then, the fact that the Hilbert series is multiplicative follows from the $n=1$ case.

For $d\in[n]$, define $T_d\in\Q\llbracket t\rrbracket$ by
\[T_d=\sum_{j\geq 1}(t_{1j}+t_{2j}+\cdots +t_{dj}).\]

\begin{theorem}\label{thm:hilbert series}
    Let $M$ be a finitely generated $\bfA$-module. Then there exist $a\in\N$ and polynomials $p_0(t),p_1(t),\ldots,p_n(t)\in\Q[t_{ij}:i\in[n],j\geq 1]$ so that 
    \[\widetilde{H}_M(t)=p_0(t)+\sum_{d=1}^np_k(t)\exp(T_d).\]
\end{theorem}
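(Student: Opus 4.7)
The plan is to combine the decomposition of $\thrmK(\Mod_\bfA)$ in Theorem~\ref{thm:groth grp} with explicit computations of $\widetilde{H}_{A_d}$, using that the enhanced Hilbert series is additive on short exact sequences (immediate from its definition via characters) and multiplicative under tensor products. Under the equivalence between polynomial $\bfL$-representations and $\FB^{\otimes n}$-modules, tensor product corresponds to Day convolution, and the enhanced Frobenius characteristic $\Lambda^{\otimes n}\to\Q[t_{ij}]$ is the $n$-fold product of the classical ring map $\Lambda\to\Q[t_j]$ (obtained from the standard Frobenius map via the substitution $p_j = j\,t_j$); this yields multiplicativity of $\widetilde{H}$. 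Consequently $\widetilde{H}$ descends to a $\Lambda^{\otimes n}$-module homomorphism
\[\widetilde{H}\colon \thrmK(\Mod_\bfA)\longrightarrow \Q\llbracket t_{ij}:i\in[n],j\geq 1\rrbracket,\]
with $\Lambda^{\otimes n}\cong\thrmK(\Rep(\P))$ acting on the target via its image in $\Q[t_{ij}]$.

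Next I would compute $\widetilde{H}_{A_d}$ directly. As an $\bfL$-representation, $A_d=\bigotimes_{i=1}^d\Sym(\bfV_{(n-i+1)})$, with the $i$-th factor sitting in the $i$-th $\GL$-component of $\bfL$. Under the $\FB^{\otimes n}$ equivalence, $\Sym^k(\bfV_{(n-i+1)})$ corresponds to the trivial $\frakS_k$-representation placed in the $i$-th slot, whose character on every conjugacy class is $1$. Thus
\[\widetilde{H}_{\Sym(\bfV_{(n-i+1)})}(t)=\sum_\lambda \frac{t^\lambda}{\lambda!}=\prod_{j\geq 1}\sum_{m\geq 0}\frac{t_{ij}^m}{m!}=\exp\Bigl(\sum_{j\geq 1} t_{ij}\Bigr),\]
and by multiplicativity $\widetilde{H}_{A_d}(t)=\exp(T_d)$ for $d\geq 1$, while $\widetilde{H}_{A_0}(t)=1$ since $A_0=\C$.

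Finally, by Theorem~\ref{thm:groth grp} we may write $[M]=\sum_{d=0}^n [A_d]\cdot v_d$ uniquely with $v_d\in\Lambda^{\otimes n}$. Applying $\widetilde{H}$ and using $\Lambda^{\otimes n}$-linearity,
\[\widetilde{H}_M(t)=\sum_{d=0}^n p_d(t)\,\widetilde{H}_{A_d}(t)=p_0(t)+\sum_{d=1}^n p_d(t)\exp(T_d),\]
where $p_d(t):=\widetilde{H}(v_d)\in\Q[t_{ij}]$. Polynomiality of each $p_d$ holds because $v_d$ is a finite $\Z$-linear combination of simple classes $[S_{\underline{\lambda}}]$, and each simple $S_{\underline{\lambda}}$ corresponds to a simple $\FB^{\otimes n}$-module concentrated at a single object $\underline{a}=(|\lambda^i|)$ with finite-dimensional value, so its enhanced Hilbert series is a finite sum of monomials. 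The integer $a$ can then be taken as any upper bound on the second indices of the finitely many variables $t_{ij}$ appearing in $p_0,\dots,p_n$.

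The main technical point is the well-definedness of the formal power series product $p_d(t)\exp(T_d)$ as an element of $\Q\llbracket t_{ij}\rrbracket$, which in turn requires that for finitely generated $M$ the $\FB^{\otimes n}$-module underlying $M$ has finite-dimensional value at each multigraded degree $\underline{a}$. This follows from the Day convolution formula applied to a presentation $\bfA\otimes V\twoheadrightarrow M$ with $V$ of finite length: only finitely many pairs $(\underline{b},\underline{c})$ satisfy $\underline{b}+\underline{c}=\underline{a}$, and each induced summand is finite-dimensional because $\bfA_{\underline{b}}$ is trivial (hence one-dimensional) and $V_{\underline{c}}$ is finite-dimensional.
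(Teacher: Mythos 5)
Your proposal is correct and follows essentially the same route as the paper: compute $\widetilde{H}_{A_d}(t)=\exp(T_d)$ from the trivial-representation description of $A_d$ as an $\FB^{\otimes n}$-module, note that simple $\P$-representations contribute polynomials, and conclude via the freeness of $\thrmK(\Mod_\bfA)$ over $\Lambda^{\otimes n}$ (Theorem~\ref{thm:groth grp}) together with additivity and multiplicativity of the enhanced Hilbert series. Your added remarks on the descent of $\widetilde{H}$ to the Grothendieck group and on finite-dimensionality of the multigraded pieces are just explicit versions of steps the paper leaves implicit.
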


\begin{proof}
    We first show that $\widetilde{H}_{A_d}(t)=\exp(T_d)$ for $d\in[n]$. As an $\FB^{\otimes n}$-module, one may observe that the $\frakS_{\underline{a}}$-representation corresponding to $A_d$ is given by
    \[(A_d)_{\underline{a}}=\begin{cases}
        \triv_{a_1}\boxtimes\cdots\boxtimes \triv_{a_d}\quad&\text{$a_i=0$ for $i>d$,}\\
        0\quad&\text{otherwise,}
    \end{cases}\]
    where $\triv_{a}$ denotes the trivial representation of $\frakS_a$. Thus,
    \[\Tr(c_{\underline{\lambda}}\mid A_d)=\begin{cases}
        1\quad&\text{$\lambda^i=0$ for $i>d$},\\
        0\quad&\text{otherwise.}
    \end{cases}\]
    The expression for $\widetilde{H}_{A_d}(t)$ then follows.

    Now, suppose $M= S_{\underline{\lambda}}$ is a simple $\P$-representation. Let $\underline{a}=(\abs{\lambda^i})$. Then as an $\FB^{\otimes n}$-module,
    \[M_{\underline{b}}=\begin{cases}
        M_{\lambda^1}\boxtimes \cdots \boxtimes M_{\lambda^n}\quad&\underline{b}=\underline{a},\\
        0\quad&\text{otherwise},
    \end{cases}\]
    where $M_{\lambda^i}$ denotes the simple representation of $\frakS_{a_i}$ corresponding to the partition $\lambda^i$. It is clear in this case that $\widetilde{H}_M(t)$ is a polynomial.

    The result now follows from Theorem~\ref{thm:groth grp} and the multiplicativity of the Hilbert series.
\end{proof}

\section{Equivalence with $\FI(n)$-modules}\label{sec:fi(n)}

\begin{definition}
    The category $\FI(n)$ of $[n]$-weighted finite sets and injections that do not decrease weights is the following category:
    \begin{itemize}
        \item The objects are $[n]$-weighted finite sets, i.e., finite sets $S=\bigsqcup_{i=1}^n S_i$, where $S_i$ consists of the elements of weight $i$. We also use the notation $S=(S_1,\ldots,S_n)$.
        \item A morphism $S\to T$ is an injection $\varphi:S\to T$ of sets such that weights do not decrease, i.e., for each $i$, we have $\varphi(S_i)\subset\bigsqcup_{j=1}^n T_j$.
    \end{itemize}
\end{definition}

When $n=1$, this category is exactly the classical $\FI$ category of finite sets and injections.

An \emph{$\FI(n)$-module} $M$ is a functor $\FI(n)\to\Vec$, and a morphism of $\FI(n)$-modules is a natural transformation of functors. Let $\Mod_{\FI(n)}$ denote the category of $\FI(n)$-modules.
The goal of this section is to show that $\FI(n)$-modules are equivalent to $\bfA$-modules.

\subsection{Preliminaries on $\FI(n)$-modules}

\subsubsection{Morphisms in $\FI(n)$}

Let $\underline{a}\in\N^n$ be a tuple of nonnegative integers. Similar to the combinatorial categories $\scrC_d$ studied in \S\ref{sec:rep H_d}, objects in $\FI(n)$ can be identified with such tuples; we use $\underline{a}=(a_1,\ldots,a_n)$ to denote the weighted set $([a_1],\ldots,[a_n])$, where the set in the $i^{\text{th}}$ coordinate denotes the elements of weight $i$.

There is a partial order on $\N^n$ known as the \emph{dominance order}: we have that $(a_1,\ldots,a_n)\geq(b_1,\ldots,b_n)$ if and only if
\[a_1+\cdots+a_i\geq b_1+\cdots +b_i,\quad\text{for each $i=1,\ldots,n$}.\]
This partial order describes exactly when a map $\underline{b}\to\underline{a}$ exists in $\FI(n)$. In particular, for a tuple $\underline{a}=(a_1,\ldots,a_n)$, let $\tau(\underline{a})=(a_n,\ldots,a_1)$ denote the reverse tuple. Then there is a map $\underline{b}\to\underline{a}$ if and only if $\tau(\underline{b})\leq\tau(\underline{a})$.

\subsubsection{Simple and projective modules}

The simple $\FI(n)$-objects $\bfM_{\underline{\lambda}}$ are indexed by $n$-tuples of partitions $\underline{\lambda}=(\lambda^1,\ldots,\lambda^n)$ and given by simple representations of products of symmetric groups:
\[\bfM_{\underline{\lambda}}(\underline{b})\mapsto\begin{cases}
    M_{\lambda^1}\boxtimes\cdots\boxtimes M_{\lambda^n}\quad&\text{if $b_i=\abs{\lambda^i}$,}\\
    0\quad&\text{otherwise.}
\end{cases}\]

The main family of $\FI(n)$-modules of interest for us is defined as follows. For $\underline{a}\in\N^n$, let $\scrP_{\underline{a}}$ be the $\FI(n)$-module defined by 
\[\scrP_{\underline{a}}(S)=\C[\Hom_{\FI(n)}(\underline{a},S)].\]
Then, if $M$ is any $\FI(n)$-module, we have by \cite[Proposition 3.2]{SSBrauerI} that
\[\Hom_{\FI(n)}(\scrP_{\underline{a}},M)=M(\underline{a}).\]
Thus, $\scrP_{\underline{a}}$ is a projective $\FI(n)$-module, and we call it the \emph{principal projective} at $\underline{a}$.

The group $\frakS_{\underline{a}}$ acts on $\scrP_{\underline{a}}$ by $\FI(n)$-module automorphisms. If $\underline{\lambda}$ is such that $\abs{\lambda^i}=a_i$ for each $i$, we have a projective $\FI(n)$-module $\scrP_{\underline{\lambda}}$ given by the $\bfM_{\underline{\lambda}}$-isotypic piece of $\scrP_{\underline{a}}$:
\[\scrP_{\underline{\lambda}}=\Hom_{\frakS_{\underline{a}}}(\bfM_{\underline{\lambda}},\scrP_{\underline{a}}).\]
We have that $\scrP_{\underline{\lambda}}$ is the projective cover of $\bfM_{\underline{\lambda}}$. Then, since the $\scrP_{\underline{\lambda}}$'s are direct summands of the principal projectives, all $\FI(n)$-modules have a presentation by principal projective $\FI(n)$-modules.

\subsubsection{Tensor product}

There is a natural symmetric monoidal structure $\amalg$ on $\FI(n)$ given by disjoint union. For two weighted sets $S,T$, their product $S\amalg T$ is $(S_1\sqcup T_1,\ldots,S_n\sqcup T_n)$. In particular, we have that $\underline{a}\amalg\underline{b}=(a_1+b_1,\ldots,a_n+b_n)$.

The monoidal structure on $\FI(n)$ defines a symmetric tensor product $\otimes$ of $\FI(n)$-modules, known as \emph{Day convolution} or \emph{convolution tensor product} (see e.g., \cite[\S 2.1.14]{SSstabpatterns} and \cite[\S 3.10]{SSBrauerI}). This tensor product among principal projectives has an especially nice description: by \cite[Proposition 3.25(d)]{SSBrauerI}, we have that
\[\scrP_{\underline{a}}\otimes\scrP_{\underline{b}}=\scrP_{\underline{a}\amalg\underline{b}}.\]

\subsection{A mapping property}

We describe a mapping property for certain projective objects of $\Rep(\P)$ that will be used to show the equivalence. For a tuple $\underline{a}$, let $W_{\underline{a}}$ denote the principal projective $\P$-representation
\[W_{\underline{a}}=\bigotimes_{i=1}^n (\bfV_{n-i+1})^{\otimes a_i}.\]
It is generated as a $\P$-representation by the vector
\[\epsilon_{\underline{a}}=(e_{n1}\otimes\cdots e_{na_1})\otimes\cdots\otimes (e_{11}\otimes\cdots\otimes e_{1a_n}).\]
Let $\lambda(\underline{a})=((1^{a_n}),\ldots,(1^{a_1}))$ denote the weight of this vector, and for any $\P$-representation $V$, let $V^{\lambda(\underline{a})}$ denote the $\lambda(\underline{a})$-weight space of $V$ as a representation of $\bfL\subset\P$.

\begin{proposition}\label{prop:proj p-mod mapping}
    Let $V$ be a polynomial representation of $\P$. Then the map
    \[\Hom_\P(W_{\underline{a}},V)\to V^{\lambda(\underline{a})}:f\mapsto f(\epsilon_{\underline{a}})\]
    is an isomorphism.
\end{proposition}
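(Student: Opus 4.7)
The plan is to prove injectivity via the extended $\End(\bfV)$-action on polynomial $\P$-representations, and surjectivity via the projectivity of $W_{\underline{a}}$ together with a d\'evissage argument.

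The map is well-defined since $\epsilon_{\underline{a}}$ is a weight vector of weight $\lambda(\underline{a})$ under the $\bfL$-torus and $\P$-equivariant maps preserve $\bfL$-weights. For injectivity, I claim that $\epsilon_{\underline{a}}$ generates $W_{\underline{a}}$ as a module over the monoid $\End(\bfV)$. Given an arbitrary basis vector $w = \bigotimes_{i=1}^n (v_{i,1} \otimes \cdots \otimes v_{i, a_i})$ of $W_{\underline{a}}$ with $v_{i, j} \in \bfV_{n-i+1}$, define $f \in \End(\bfV)$ by $f(e_{n-i+1, j}) = v_{i, j}$ for $1 \leq i \leq n$ and $1 \leq j \leq a_i$, and $f = 0$ on all other basis vectors. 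Since $v_{i, j} \in \bfV_{n-i+1}$, the map $f$ preserves the flag, and the diagonal action yields $f \cdot \epsilon_{\underline{a}} = w$. By item (5) of \S\ref{subsec:prelim Rep(P)}, every $\P$-equivariant map between polynomial $\P$-representations is automatically $\End(\bfV)$-equivariant, so any $\phi \in \Hom_\P(W_{\underline{a}}, V)$ is determined by $\phi(\epsilon_{\underline{a}})$.

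For surjectivity, I will decompose $W_{\underline{a}}$ using Schur--Weyl duality on each tensor factor:
\[W_{\underline{a}} = \bigoplus_{\underline{\lambda}:\, |\lambda^i| = a_i \text{ for all } i} \bfM_{\underline{\lambda}} \otimes P_{\underline{\lambda}},\]
where $P_{\underline{\lambda}} = \bigotimes_{i=1}^n \bfS_{\lambda^i}(\bfV_{n-i+1})$ is the projective cover of $S_{\underline{\lambda}}$ recalled in \S\ref{subsec:prelim Rep(P)}. Hence $W_{\underline{a}}$ is projective and $\Hom_\P(W_{\underline{a}}, -)$ is exact; the weight space functor $(-)^{\lambda(\underline{a})}$ is also exact. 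The map in the proposition is therefore a natural transformation between exact functors $\Rep(\P) \to \Vec$, so it suffices to verify bijectivity on simple objects. For $V = S_{\underline{\nu}} = \bigotimes_i \bfS_{\nu^i}(\bfV_{(n-i+1)})$, both sides have dimension $\prod_i \dim \bfM_{\nu^i}$ when $|\nu^i| = a_i$ for all $i$ and vanish otherwise---the LHS by reading off the multiplicity of $P_{\underline{\nu}}$ in the decomposition above, and the RHS via the classical Kostka identity $\dim \bfS_\nu(\C^k)^{(1^{|\nu|})} = \dim \bfM_\nu$ applied in each $\bfL$-factor. Combined with injectivity, this gives bijectivity on simples, hence on all finite length $V$ by d\'evissage, and then on arbitrary polynomial $\P$-representations since every $v \in V^{\lambda(\underline{a})}$ lies in a finite length subrepresentation (as $\Rep(\P)$ is locally of finite length by the filtration of any polynomial $\P$-rep by subquotients of the finite length modules $\bfV^{\otimes d}$).

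The most delicate point is the reduction of $\P$-equivariance to $\End(\bfV)$-equivariance combined with the verification that $\epsilon_{\underline{a}}$ generates $W_{\underline{a}}$ under this enlarged action; once these are in hand, the remainder of the argument is essentially a dimension count built on the projective cover description of $\Rep(\P)$ from \cite{Yu}.
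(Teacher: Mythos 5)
Your proof is correct, but it takes a genuinely different route from the paper's. For injectivity, the paper simply uses that $\epsilon_{\underline{a}}$ generates $W_{\underline{a}}$ as a $\P$-representation (asserted just before the proposition), whereas you generate it under the monoid $\End(\bfV)$ and invoke $\End(\bfV)$-equivariance of $\P$-maps; that equivariance is slightly more than item (5) of \S\ref{subsec:prelim Rep(P)} literally states (it follows from density of $\P$ in the flag-preserving endomorphisms, and is implicit in the paper's use of the monoid action), and your explicit endomorphism construction does make the generation claim easier to check than the $\P$-orbit version. For surjectivity, the paper first proves the statement for $V=W_{\underline{b}}$ by matching $\dim\Hom_\P(W_{\underline{a}},W_{\underline{b}})$, computed via the $\FB(n)$-equivalence and its mapping property, against an explicit basis of $(W_{\underline{b}})^{\lambda(\underline{a})}$, and then handles general finite length $V$ by lifting a weight vector through a surjection $\bigoplus W_{\underline{b}}\to V$; this only needs the existence of such presentations and no exactness of functors. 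You instead decompose $W_{\underline{a}}=\bigoplus_{\underline{\lambda}}\bfM_{\underline{\lambda}}\otimes P_{\underline{\lambda}}$ by factorwise Schur--Weyl, treat both sides as exact functors (using projectivity of $W_{\underline{a}}$, which the paper also grants in \S\ref{sec:fi(n)}), verify the dimension count on simples via $\Hom_\P(P_{\underline{\lambda}},S_{\underline{\nu}})=\delta_{\underline{\lambda}\underline{\nu}}\C$ and the Kostka identity $K_{\nu,(1^{\abs{\nu}})}=\dim M_\nu$, and then conclude by d\'evissage and local finiteness. Your argument stays entirely on the representation-theoretic side (projective covers plus Schur--Weyl, no appeal to the $\FB(n)$ mapping property) and yields the multiplicity spaces on simples explicitly; the paper's argument is more elementary in its functorial demands (left-exactness of $\Hom$ suffices) and leans on the combinatorial model from \cite{Yu}. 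Both dimension counts agree, and your reduction from simples to finite length to arbitrary polynomial representations is sound.
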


\begin{proof}
    If $f:W_{\underline{a}}\to V$ is a map of $\P$-representations, then it must preserve $\bfL$-weight spaces. In addition, $\epsilon_{\underline{a}}$ generates $W_{\underline{a}}$, and so its image under $f$ determines $f$ as a map. Therefore, it suffices to show that the above assignment is surjective.
    
    We first consider the case when $V$ is also a principal projective object $V=W_{\underline{b}}$. Recall from \S\ref{subsec:prelim Rep(P)} that the categories of finite length $\FB(n)$-modules and finite length polynomial $\P$-representations are equivalent.
    Applying this equivalence and the mapping property for principal projectives for $\FB(n)$-modules \cite[\S2.3]{Yu}, the dimension of $\Hom_\P(W_{\underline{a}},W_{\underline{b}})$ as a vector space is equal to that of $\Hom_{\FB(n)}(\underline{b},\underline{a}).$
    
    We now consider the dimension of $(W_{\underline{b}})^{\lambda(\underline{a})}$. If $a_1+\cdots+a_n\neq b_1+\cdots+b_n$, then this weight space is clearly $0$. Otherwise, the dimension agrees with that of $\Hom_{\FB(n)}(\underline{b},\underline{a})$: the tensor factor $(\bfV_{n-i+1})^{\otimes b_i}$ can only contribute to the first $n-i+1$ coordinates of $n$-tuple $\lambda(\underline{a})$, which are $((1^{a_n}),\ldots,(1^{a_i}),0,\ldots,0)$, and so $(W_{\underline{b}})^{\lambda(\underline{a})}$ has a basis indexed by $\Hom_{\FB(n)}(\underline{b},\underline{a})$.
    This shows that $\Hom_\P(W_{\underline{a}},W_{\underline{b}})\cong (W_{\underline{b}})^{\lambda(\underline{a})}$.

    Now suppose $V$ is an arbitrary representation of $\P$; without loss of generality, we may also assume $V$ is of finite length. By \cite[Proposition 4.8]{Yu}, there is a surjection $\varphi:\bigoplus W_{\underline{b}}\to V$ from a finite direct sum of principal projective objects. Let $v\in V^{\lambda(\underline{a})}$, and let $\hat{v}\in\varphi^{-1}(v)$ be a lift of $v$. By the previous paragraph, there exists a $\P$-equivariant map $f:W_{\underline{a}}\to \bigoplus W_{\underline{b}}$ such that
    \[f(\epsilon_{\underline{a}})=\hat{v}\in\bigoplus (W_{\underline{b}})^{\lambda(\underline{a})}.\]
    Then, the composition $\varphi\circ f:W_{\underline{a}}\to V$ gives the desired map of $\P$-representations that shows the above assignment is an isomorphism.
\end{proof}

\subsection{Proof of the equivalence}

We now prove that $\Mod_{\FI(n)}\cong\Mod_\bfA$ as tensor categories. Since both $\Mod_\bfA$ and $\Mod_{\FI(n)}$ are abelian categories with enough projectives, it suffices to show that their respective categories of projective objects are equivalent as tensor categories. 

The projective objects of $\Mod_\bfA$ are exactly those of the form $\bfA\otimes V$, where $V$ is a projective polynomial representation of $\P$. Recall from \S\ref{subsec:prelim Rep(P)} that the projective covers of the simple $\P$-representations are of the form $\bigotimes_{i=1}^n\bfS_{\lambda^i}(\bfV_{n-i+1})$. These representations can all be realized as direct summands of $W_{\underline{a}}$'s. 

For a tuple $\underline{a}\in\N^n$, let $Q_{\underline{a}}$ denote the projective $\bfA$-module $\bfA\otimes W_{\underline{a}}$. It follows that every $\bfA$-module has a presentation by $Q_{\underline{a}}$'s, and so we call these the \emph{principal projective} $\bfA$-modules. To show that the categories of projective objects are equivalent, it is enough to show that the abelian tensor categories generated by the $Q_{\underline{a}}$'s and the principal projective $\FI(n)$-modules $\scrP_{\underline{a}}$ are equivalent.

Note that $Q_{\underline{a}}$ is generated as an $\bfA$-module by the element
\[v_{\underline{a}}=1\otimes \epsilon_{\underline{a}}=1\otimes (e_{n1}\otimes\cdots\otimes e_{n a_1})\otimes\cdots\otimes (e_{11}\otimes\cdots\otimes e_{1a_n}),\]
which has weight $\lambda(\underline{a})=((1^{a_n}),\ldots,(1^{a_1}))$ under the $\bfL$-action. Furthermore, the tensor product $Q_{\underline{a}}\otimes Q_{\underline{b}}$ is also a principal projective. Let $\underline{a+b}$ denote the tuple $(a_1+b_1,\ldots,a_n+b_n)$. Then, we have
\[Q_{\underline{a}}\otimes Q_{\underline{b}}=\bfA\otimes\bigotimes_{i=1}^n(\bfV_{n-i+1})^{\otimes(a_i+b_i)}=Q_{\underline{a+b}}.\]

We show that the assignment $\scrP_{\underline{a}}\mapsto Q_{\underline{a}}$ induces a tensor functor that gives the equivalence. We now define the functor on morphisms. A map $\scrP_{\underline{a}}\to\scrP_{\underline{b}}$ is the same as giving a morphism $\sigma:\underline{b}\to\underline{a}$ in $\FI(n)$. The corresponding map of $\bfA$-modules $f_\sigma:Q_{\underline{a}}\to Q_{\underline{b}}$ is given as follows.

Since $v_{\underline{a}}$ generates $Q_{\underline{b}}$, it suffices to describe its image under $f_\sigma$ so that the restriction of $f_\sigma$ to $W_{\underline{a}}\subset\bfA\otimes W_{\underline{a}}$ is $\P$-equivariant. The image $f_\sigma(v_{\underline{a}})$ is a pure tensor: in the tensor factor of $Q_{\underline{b}}$ corresponding to an element $i\in\underline{b}$ of weight $d$, we have the element that is in the tensor factor of $Q_{\underline{a}}$ corresponding to the element $\sigma(i)\in\underline{b}$; in the tensor factor corresponding to $\bfA$, we have the monomial consisting of the product of $x_{ij}$'s, where the indices are coming from those in $v_{\underline{a}}$ that are not given by a tensor factor in the image of $\sigma$. One sees that the image $f_\sigma(v_{\underline{a}})$ is in the $\lambda(\underline{a})$-weight space of $Q_{\underline{b}}$. By Proposition~\ref{prop:proj p-mod mapping}, this gives a well-defined map of $\P$-modules, and therefore also of $\bfA$-modules.

\begin{example}
    Let $n=3$. Let $\underline{a}=(0,1,2)$ and $\underline{b}=(1,1,0)$, and consider the morphism $\sigma:\underline{b}\to\underline{a}$ in $\FI(n)$ given by mapping the element of weight $1$ in $\underline{b}$ to the element of weight $2$ in $\underline{a}$, and mapping the element of weight $2$ in $\underline{b}$ to the second element of weight $3$ in $\underline{a}$. Then the corresponding map $f_\sigma:Q_{\underline{a}}\to Q_{\underline{b}}$ is given by 
    \[1\otimes e_{21}\otimes e_{11}\otimes e_{12}\mapsto x_{11}\otimes e_{21}\otimes e_{12}\in \bfA\otimes \bfV_3\otimes \bfV_2.\]
\end{example}

\begin{proposition}\label{prop:prinproj equiv}
   The assignment $\sigma\mapsto f_\sigma$ satisfies the following properties:
   \begin{enumerate}
       \item It induces an isomorphism of vector spaces $\Hom_{\FI(n)}(\scrP_{\underline{a}},\scrP_{\underline{b}})\cong\Hom_{\bfA}(Q_{\underline{a}},Q_{\underline{b}})$.
       \item For two morphisms $\sigma:\underline{c}\to\underline{b},\pi:\underline{b}\to\underline{a}$ in $\FI(n)$, we have that $f_{\pi\circ\sigma}=f_\sigma\circ f_\pi$.
       \item For two morphisms $\sigma:\underline{c}\to\underline{a},\pi:\underline{d}\to\underline{b}$ in $\FI(n)$, we have that $f_{\sigma\amalg\pi}=f_\sigma\otimes f_\pi$ as maps $Q_{\underline{a+b}}\to Q_{\underline{c+d}}$.
   \end{enumerate}
\end{proposition}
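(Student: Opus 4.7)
The plan is to handle the three properties by first setting up a cleaner description of $f_\sigma$ and then checking each property as a direct computation.

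For part (1), the tensor--hom adjunction gives $\Hom_\bfA(Q_{\underline{a}}, Q_{\underline{b}}) \cong \Hom_\P(W_{\underline{a}}, Q_{\underline{b}})$, and by Proposition~\ref{prop:proj p-mod mapping} this space is identified with the weight space $(Q_{\underline{b}})^{\lambda(\underline{a})}$ via evaluation at $\epsilon_{\underline{a}}$. Under these identifications, the $\bfA$-module map $f_\sigma$ corresponds to the weight vector $f_\sigma(v_{\underline{a}}) \in Q_{\underline{b}}$. So the isomorphism in (1) will follow once I exhibit that $\{f_\sigma(v_{\underline{a}}): \sigma \in \Hom_{\FI(n)}(\underline{b}, \underline{a})\}$ is a basis of $(Q_{\underline{b}})^{\lambda(\underline{a})}$. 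These vectors are by construction distinct pure tensors of the form (monomial in $\bfA$) $\otimes$ (basis pure tensor in $W_{\underline{b}}$), hence linearly independent. For spanning, I would argue that any $\lambda(\underline{a})$-weight basis vector $m \otimes t$ of $Q_{\underline{b}} = \bfA \otimes W_{\underline{b}}$ canonically determines a morphism $\sigma: \underline{b} \to \underline{a}$: at each position $(p, j) \in \underline{b}$ the tensor $t$ has some basis vector $e_{s, k} \in \bfV_{n-p+1}$ (so $s \leq n-p+1$), and setting $\sigma(p, j) = (n-s+1, k)$ produces a weight-nondecreasing injection whose unused elements of $\underline{a}$ account exactly for the factors of $m$. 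This recipe recovers $\sigma$ from $m \otimes t$, giving the desired bijection.

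For parts (2) and (3), the key step is to rewrite $f_\sigma$ in a manifestly functorial form. Using $Q_{\underline{b}} = \bfA \otimes \bigotimes_{(p, j) \in \underline{b}} \bfV_{n-p+1}$, I would express $f_\sigma: Q_{\underline{a}} \to Q_{\underline{b}}$ as the tensor product of the identity on the $\bfA$-factor with, for each $(i, k) \in \underline{a}$, the linear map from the $(i, k)$-factor $\bfV_{n-i+1}$ that either includes into the $(p, j)$-factor of $W_{\underline{b}}$ via $\bfV_{n-i+1} \hookrightarrow \bfV_{n-p+1}$ when $(i, k) = \sigma(p, j)$, or maps into $\bfA$ via $\bfV_{n-i+1} \hookrightarrow \bfV = \Sym^1(\bfV) \subset \bfA$ when $(i, k) \notin \sigma(\underline{b})$, followed by multiplying all the newly introduced $\bfA$-factors into the original $\bfA$-factor. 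This composite agrees with the original definition on $v_{\underline{a}}$ and is manifestly $\bfA$-linear, hence equals $f_\sigma$ by uniqueness (Proposition~\ref{prop:proj p-mod mapping}).

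With this description, functoriality (2) follows by tracing each factor $\bfV_{n-i+1}$ of $W_{\underline{a}}$ through $f_\sigma \circ f_\pi$: step one either sends it to some factor of $W_{\underline{b}}$ or into $\bfA$; step two then either further forwards the $W_{\underline{b}}$-factor to $W_{\underline{c}}$ or drops it into $\bfA$. The resulting composite matches the factor-wise recipe for $f_{\pi \circ \sigma}$ on the nose, so the maps agree on $v_{\underline{a}}$ and hence as $\bfA$-module maps. Monoidality (3) is analogous but easier: under the identification $Q_{\underline{a}} \otimes Q_{\underline{b}} = Q_{\underline{a + b}}$, the factor-wise description of $f_{\sigma \amalg \pi}$ decomposes into the $\sigma$-piece on the $\underline{a}$-factors and the $\pi$-piece on the $\underline{b}$-factors, which is exactly $f_\sigma \otimes f_\pi$. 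The main obstacle in the whole argument is really establishing the tensor-factor-wise reformulation of $f_\sigma$ and verifying it coincides with the original definition, since this requires invoking the mapping property of Proposition~\ref{prop:proj p-mod mapping} at the right level of generality; once in hand, (1), (2), (3) all reduce to bookkeeping.
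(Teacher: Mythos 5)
Your proposal is correct and follows essentially the same route as the paper: identify $\Hom_{\bfA}(Q_{\underline{a}},Q_{\underline{b}})$ with the $\lambda(\underline{a})$-weight space of $Q_{\underline{b}}$ via adjunction and Proposition~\ref{prop:proj p-mod mapping}, and show the $f_\sigma(v_{\underline{a}})$'s form a basis of that weight space, with (2) and (3) reduced to the naturality of the factor-wise construction. Your explicit tensor-factor-wise reformulation of $f_\sigma$ simply spells out what the paper summarizes as ``naturality of the construction.''
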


\begin{proof}
(1): We have that $\Hom_{\FI(n)}(\underline{b},\underline{a})=0$ if and only if $\tau(\underline{b})\not\leq\tau(\underline{a})$. Furthermore, we have that there is no weight space of weight $\lambda(\underline{a})=((1^{a_n}),\ldots,(1^{a_1}))$ in $Q_{\underline{b}}$ when $\tau(\underline{b})\not\leq\tau(\underline{a})$. Thus, $\Hom_{\FI(n)}(\scrP_{\underline{a}},\scrP_{\underline{b}})=0$ implies $\Hom_{\bfA}(Q_{\underline{a}},Q_{\underline{b}})=0$.

Now suppose $\tau(\underline{b})\leq\tau(\underline{a})$. Then the weight space of weight $\lambda(\underline{a})$ in $Q_{\underline{b}}$ is exactly the vector space spanned by the $f_\sigma(v_{\underline{a}})$'s as we range over all morphisms $\sigma:\underline{b}\to\underline{a}$ in $\FI(n)$. Since the $f_\sigma$'s are indeed maps of $\bfA$-modules, we have the desired isomorphism of vector spaces.

(2) and (3): These follow from the naturality of the construction of the $f_\sigma$'s, along with the fact for (3) that $Q_{\underline{a}}\otimes Q_{\underline{b}}\cong Q_{\underline{a+b}}$ is given by the monoidal structure in $\FI(n)$.
\end{proof}

Along with the discussion at the beginning of this subsection, the equivalence of categories now follows.

\begin{theorem}\label{thm:main currying}
    We have an equivalence of tensor categories $\Mod_\bfA\cong \Mod_{\FI(n)}$.
\end{theorem}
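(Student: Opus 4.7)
The strategy, as the author indicates before the theorem, is to upgrade the equivalence on principal projectives given by Proposition~\ref{prop:prinproj equiv} to an equivalence of the full abelian tensor categories. I would proceed in two phases: extend the functor $F$ defined by $\scrP_{\underline{a}} \mapsto Q_{\underline{a}}$, $\sigma \mapsto f_\sigma$ from principal projectives to all of $\Mod_{\FI(n)}$, and then verify that the extended $F$ is symmetric monoidal.

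For the extension, Proposition~\ref{prop:prinproj equiv}(1)--(2) furnishes a fully faithful $\C$-linear functor $F$ on the subcategory of principal projectives. Extend additively to finite direct sums, then split idempotents; the resulting category on each side is the full subcategory of finitely generated projectives (every projective cover of a simple is a summand of a principal projective), so $F$ becomes an equivalence of finitely generated projectives, which further extends to all projectives by passing to direct sums. Now, for any $M \in \Mod_{\FI(n)}$, choose a projective presentation $P_1 \xrightarrow{\alpha} P_0 \to M \to 0$ and set $F(M) = \coker(F(\alpha))$. Fullness of $F$ on projectives lets one lift any morphism $M \to M'$ to a morphism of presentations, and a standard diagram chase shows the induced map on cokernels is independent of the lift. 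Essential surjectivity and fully faithfulness follow from the existence of analogous projective presentations for every $\bfA$-module together with the $5$-lemma.

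For monoidality, the key input is flatness of the principal projectives: $Q_{\underline{a}} = \bfA \otimes W_{\underline{a}}$ is a free $\bfA$-module, and $\scrP_{\underline{a}} \otimes -$ is exact because $\scrP_{\underline{a}}$ is representable, so Day convolution against it is a coend that collapses. Proposition~\ref{prop:prinproj equiv}(3), combined with the identities $\scrP_{\underline{a}} \otimes \scrP_{\underline{b}} = \scrP_{\underline{a} \amalg \underline{b}}$ and $Q_{\underline{a}} \otimes Q_{\underline{b}} = Q_{\underline{a+b}}$, supplies compatibility on principal projectives; it extends to arbitrary modules by right-exactness of $\otimes$ in each slot applied to presentations. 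The associator, unitor, and symmetry constraints reduce on generators to the combinatorics of $\amalg$ in $\FI(n)$, which matches the monoidal structure on $\Rep(\P)$ by the construction of $f_\sigma$, and this agreement propagates by naturality.

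The main technical obstacle is the coherence bookkeeping: verifying that the isomorphisms $F(M \otimes N) \cong F(M) \otimes F(N)$ assemble into a genuine symmetric monoidal structure rather than just a pointwise one, and that they are well-defined independently of the chosen presentations. Flatness of principal projectives is precisely what resolves this: it ensures that the presentation used to compute $M \otimes N$ is preserved by $F$, so every coherence diagram on general modules reduces to the analogous diagram on principal projectives, where Proposition~\ref{prop:prinproj equiv}(3) supplies the needed identifications.
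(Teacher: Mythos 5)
Your proposal is correct and follows essentially the same route as the paper: reduce to the subcategories of (principal) projective objects, use Proposition~\ref{prop:prinproj equiv} together with $\scrP_{\underline{a}}\otimes\scrP_{\underline{b}}=\scrP_{\underline{a}\amalg\underline{b}}$ and $Q_{\underline{a}}\otimes Q_{\underline{b}}=Q_{\underline{a+b}}$, and then extend to the full abelian tensor categories via projective presentations. You simply spell out the standard extension and coherence bookkeeping that the paper leaves implicit.
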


\bibliographystyle{alpha}
\bibliography{ref}

\end{document}